\title{On the periodic $v_2$-self-map of $A_1$}
\author[Prasit B.]{Prasit Bhattacharya$^{1,*}$}
\address{$^1$Department of Mathematics, University of Notre Dame, 106 Hayes-Healy Hall, Notre Dame, IN 46556, USA}
\address{$^1$Tel: +1(574) 631-7776 }
\address{$^*$Corresponding author}
\email{$^1$prasbhat@indiana.edu}
\author[P. Egger]{Philip Egger$^2$}
\address{$^{2,3}$Department of Mathematics, Northwestern University, 2033 Sheridan Road, Evanston, IL 60208, USA}
\address{$^2$Tel: +1(847)467-1958}
\email{$^2$philip.egger@math.northwestern.edu}
\author[M. Mahowald]{Mark Mahowald$^3$}
\thanks{Prasit Bhattacharya is supported in part by NSF through grant DMS-1105255.}
\begin{document}

\maketitle

\begin{center}This paper is dedicated to the memory of Mark Mahowald (1931-2013).\end{center}

\begin{abstract} 
We prove that the minimal $v_2$-self-map of the $2$-local spectrum $A_1$ has periodicity $32$.
\end{abstract}

\begin{center}Keywords: stable homotopy, $v_2$-periodicity\end{center}

\section*{Acknowledgments}
The first and second authors would like to thank  Mark Behrens, Bob Bruner, Paul Goerss, Mike Hill and Mike Mandell for their invaluable assistance and encouragement throughout this project, as well as Irina Bobkova for some helpful discussions. 
 \newline

\begin{conv*}Throughout this paper we work in the stable homotopy category of spectra localized at the prime $2$.

\end{conv*}
\section{Introduction}
Let $K(n)$ be the $n^{th}$ Morava $K$-theory. Let $\mathcal{C}_0$ be the category of $2$-local finite spectra, $\mathcal{C}_n \subset\mathcal{C}_0$ be the full subcategory of $K(n-1)$-acyclics and $\mathcal{C}_{\infty}$ be the full subcategory of contractible spectra. Hopkins and Smith \cite{HS} showed that the $\mathcal{C}_n$ are thick subcategories of $\mathcal{C}_0$ (in fact, they are the only thick subcategories of $\mathcal{C}_0$) and they fit into a sequence
\[ \mathcal{C}_{0} \supset \mathcal{C}_{1} \supset \ldots \supset \mathcal{C}_n \supset \ldots \supset \mathcal{C}_{\infty}.\] 
We say a finite spectrum $X$ is of type $n$ if $X \in \mathcal{C}_n \setminus\mathcal{C}_{n+1}$.

A self-map $v:\Sigma^k X\rightarrow X$ of a finite spectrum $X$ is called a \emph{$v_n$-self-map} if 
\[K(n)_*(v): K(n)_*(X) \to K(n)_*(X) \]
 is an isomorphism. For a finite spectrum $X$, a self-map $v: \Sigma^k X \to X$ can also be regarded as an element of $\pi_k(X \sma DX)$, where $DX$ is the Spanier-Whitehead dual of $X$. 

For any ring spectrum $E$, let $H_{E}$ denote the $E$-Hurewicz natural transformation \[ H_{E}: \pi_*(\underline{ \ \ }) \to E_*( \underline{ \ \ }).\] 
Let $k(n)$ denote the connective cover of $K(n)$. If $v: S^k \to X \sma DX$ is a $v_n$-self-map then $H_{k(n)}(v)\in k(n)_*(X\sma DX)$ has to be the image of $v_n^{m} \in k(n)_* \iso \Ft[v_n]$, for some positive integer $m$, under the map 
\[ k(n)_* \unit:k(n)_* \to k(n)_*(X \sma DX),\]
where $\unit: S^0 \to X \sma DX$ is the unit map. The value $m$ is called the \emph{periodicity} of the $v_n$-self-map $v$. We call $v$ a \emph{minimal $v_n$-self-map} for $X$, if $v$ is a $v_n$-self-map with smallest periodicity. An easy consequence of \cite[Theorem~9]{HS} is that the periodicity of a minimal $v_n$-self-map is always a power of $2$.

Hopkins and Smith showed, among other things, that every type $n$ spectrum admits a $v_n$-self-map and the cofiber of a $v_n$-self-map is of type $n+1$. However, not much is known about the minimal periodicity of such $v_n$-self-maps.

The sphere spectrum $S^0$ is a type $0$ spectrum with a $v_0$-self-map $2:S^0\rightarrow S^0$. The cofiber of this $v_0$-self-map is the type $1$ spectrum $M(1)$. The spectrum $M(1)$ is known to admit a unique minimal $v_1$-self-map of periodicity $4$. The cofiber of this $v_1$-self-map is denoted by $M(1,4)$. In 2008, Behrens, Hill, Hopkins and the third author \cite{BHHM} showed that the minimal $v_2$-self-map on $M(1,4)$ is $v:\Sigma^{192}M(1,4)\rightarrow M(1,4)$, which has periodicity $32$. 


Instead of $S^0$, we can start with the type $0$ spectrum $C\eta$, the cofiber of \linebreak $\eta:S^1\rightarrow S^0$. The spectrum $C\eta$ admits a non-zero $v_0$-self-map $2\sma 1_{C\eta}:C\eta\rightarrow C\eta$, with cofiber $M(1) \sma C\eta :=Y$. The type $1$ spectrum $Y$ admits eight minimal $v_1$-self-maps of periodicity $1$. These eight maps are constructed in \cite{DM81} using stunted projective spaces. The cofiber of any of the $v_1$-self-maps is referred to as $\A$. Though there are eight different $v_1$-self-maps, there are only four different homotopy types of the cofibers $\A$ (see \cite[Proposition~2.1]{DM81}). 

Let $A(1)$ be the subalgebra of the Steenrod algebra $A$ generated by $Sq^1$ and $Sq^2$. It turns out that the cohomology of any homotopy type of $\A$ is a free $A(1)$-module on one generator. However, different homotopy types of $\A$ have different $A$-module structures, which are distinguished by the action of $Sq^4$. We depict the cohomologies of the four different spectra $\A$ in Figure~\ref{fig:picdeg} where the red square brackets represent an action of $Sq^4$, the blue curved lines represent an action of $Sq^2$, and the black straight lines represent an action of $Sq^1$. The subalgebra $A(1)$ has four different $A$-module structures each of which corresponds to a homotopy type of $\A$. Any $A$-module structure on $A(1)$ has a nontrivial $Sq^4$ action on the generator in degree $1$ forced by the Adem relations. However, there are choices for $Sq^4$ actions to be trivial or nontrivial on generators in degree $0$ and degree $2$, thus giving us four different $A$-module structures. We denote different homotopy types of $\A$ using the notation $\A[ij]$ where $i$ and $j$ are the indicator functions for the action of $Sq^4$ on the generators in degree $0$ and degree $2$ respectively. The spectra $\A[01]$ and $\A[10]$ are self-dual, i.e. $\A[01] = \Sigma^6D\A[01]$ and $\A[10] = \Sigma^6D\A[10]$, whereas $\A[00]$ and $\A[11]$ are dual to each other, i.e. $\A[00] = \Sigma^{6}D\A[11]$. This is a consequence of the fact that 
\[\chi(Sq^4) = Sq^4 + Sq^3Sq^1,\]
 where $\chi:A \to A$ is the canonical antiautomorphism of the Steenrod algebra.   
\begin{figure}[h] 
\centering
\begin{tikzpicture}[scale = .7]
\node (a0) at (2,0) {$\bullet$};
\node (a1) at (2,1) {$\bullet$};
\node (a2) at (2,2) {$\bullet$};
\node (a3) at (2,3) {$\bullet$};
\node (a7) at (3,6) {$\bullet$};
\node (a6) at (3,5) {$\bullet$};
\node (a5) at (3,4) {$\bullet$};
\node (a4) at (3,3) {$\bullet$};
\draw[-] (a0) -- (a1);
\draw[-] (a2) -- (a3);
\draw[-] (a4) -- (a5);
\draw[-] (a6) -- (a7);
\draw[blue, bend left] (a0) to (a2);
\draw[blue, bend right] (a1) to (a4);
\draw[blue, bend left] (a3) to (a6);
\draw[blue, bend right] (a5) to (a7);
\draw[blue, out= 20, in = -160] (a2) to (a5);
\draw[red] (a1) -- (1.5,1) -- (1.5,5) -- (a6);
\end{tikzpicture} \hspace{25pt}
\begin{tikzpicture}[scale = .7]
\node (a0) at (2,0) {$\bullet$};
\node (a1) at (2,1) {$\bullet$};
\node (a2) at (2,2) {$\bullet$};
\node (a3) at (2,3) {$\bullet$};
\node (a7) at (3,6) {$\bullet$};
\node (a6) at (3,5) {$\bullet$};
\node (a5) at (3,4) {$\bullet$};
\node (a4) at (3,3) {$\bullet$};
\draw[-] (a0) -- (a1);
\draw[-] (a2) -- (a3);
\draw[-] (a4) -- (a5);
\draw[-] (a6) -- (a7);
\draw[blue, bend left] (a0) to (a2);
\draw[blue, bend right] (a1) to (a4);
\draw[blue, bend left] (a3) to (a6);
\draw[blue, bend right] (a5) to (a7);
\draw[blue, out= 20, in = -160] (a2) to (a5);
\draw[red] (a1) -- (1.5,1) -- (1.5,5) -- (a6);
\draw[red] (a0) --(3.5, 0) -- (3.5, 4) -- (a5);
\end{tikzpicture}\hspace{30pt}
\begin{tikzpicture}[scale = .7]
\node (a0) at (2,0) {$\bullet$};
\node (a1) at (2,1) {$\bullet$};
\node (a2) at (2,2) {$\bullet$};
\node (a3) at (2,3) {$\bullet$};
\node (a7) at (3,6) {$\bullet$};
\node (a6) at (3,5) {$\bullet$};
\node (a5) at (3,4) {$\bullet$};
\node (a4) at (3,3) {$\bullet$};
\draw[-] (a0) -- (a1);
\draw[-] (a2) -- (a3);
\draw[-] (a4) -- (a5);
\draw[-] (a6) -- (a7);
\draw[blue, bend left] (a0) to (a2);
\draw[blue, bend right] (a1) to (a4);
\draw[blue, bend left] (a3) to (a6);
\draw[blue, bend right] (a5) to (a7);
\draw[blue, out= 20, in = -160] (a2) to (a5);
\draw[red] (a1) -- (1.5,1) -- (1.5,5) -- (a6);
\draw[red] (a2) -- (1.3, 2) -- (1.3, 6) -- (a7);
\end{tikzpicture}\hspace{30pt}
\begin{tikzpicture}[scale = .7]
\node (a0) at (2,0) {$\bullet$};
\node (a1) at (2,1) {$\bullet$};
\node (a2) at (2,2) {$\bullet$};
\node (a3) at (2,3) {$\bullet$};
\node (a7) at (3,6) {$\bullet$};
\node (a6) at (3,5) {$\bullet$};
\node (a5) at (3,4) {$\bullet$};
\node (a4) at (3,3) {$\bullet$};
\draw[-] (a0) -- (a1);
\draw[-] (a2) -- (a3);
\draw[-] (a4) -- (a5);
\draw[-] (a6) -- (a7);
\draw[blue, bend left] (a0) to (a2);
\draw[blue, bend right] (a1) to (a4);
\draw[blue, bend left] (a3) to (a6); 
\draw[blue, bend right] (a5) to (a7);
\draw[blue, out= 20, in = -160] (a2) to (a5);
\draw[red] (a1) -- (1.5,1) -- (1.5,5) -- (a6);
\draw[red] (a2) -- (1.3, 2) -- (1.3, 6) -- (a7);
\draw[red] (a0) --(3.5, 0) -- (3.5, 4) -- (a5);
\end{tikzpicture}
\caption{The $A$-module structures of $H^*(\A[00])$, $H^*(\A[10])$, $H^*(\A[01])$ and $H^*(\A[11])$.}
\label{fig:picdeg}
\end{figure}
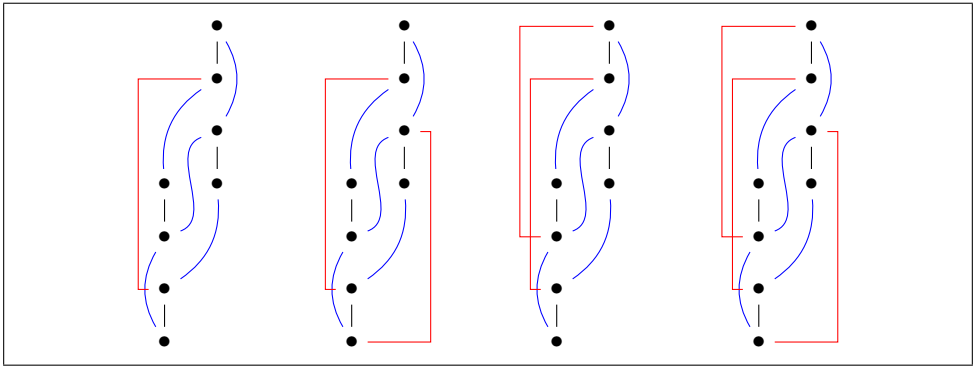

It is worth noting that $\A$ is created in a way similar to $M(1,4)$, where $C\eta$ is analogous to $S^0$, and $Y$ is analogous to $M(1)$. Therefore, it is reasonable to ask whether $\A$ has the same $v_2$-periodicity as $M(1,4)$. The minimal $v_1$-self-map of $Y$ has periodicity $1$, which is less than the periodicity of the minimal $v_1$-self-map on $M(1)$, which is $4$. Hence, it is natural to ask if any of the four models of $\A$ admit a $v_2$-self-map of periodicity $2^k$, where $k \leq 4$. In \cite{BHHM}, the third author conjectured that the minimal $v_2$-self-map of $\A$ should have periodicity $32$. The goal of this paper is to prove the following~\footnote{In \cite{DM81}, Davis and the third author claimed, incorrectly, that the periodicity of minimal $v_2$-self-maps on $M(1,4)$ and the two self-dual models of $\A$, namely $\A[01]$ and $\A[10]$, as $8$. After successfully correcting the $v_2$-periodicity of $M(1,4)$ in \cite{BHHM}, the $v_2$-periodicity of $\A$ was called into question by the third author.}:
\begin{main}\label{mainthm}For all four models of $\A$, the minimal $v_2$-self-map
\[v:\Sigma^{192}\A\rightarrow \A\]
has periodicity $32$.
\end{main} 
\begin{notn} \label{subsec:not}For any ring spectrum $E$, $\unit_{E}: S^0 \to E$ will denote the unit map. The unit map $\unit_E$ induces the the Hurewicz natural transformation
\[ H_{E}: \pi_*(\underline{ \ \ }) \to E_*( \underline{ \ \ })\]
as introduced earlier. When $E = \A \sma D\A$, we simply use $\unit: S^0 \to \A \sma D\A$ to denote the unit map. Let $\inc:S^0 \hookrightarrow \A$ be the map that represents the inclusion of the bottom cell. Let $j: \A \sma D\A \to \A$ denote the map $1_{\A} \sma D\inc$.  
\end{notn}
\begin{notn}
To lighten the notations, we use $Ext_{T}^{s,t}(X)$ to denote \linebreak $Ext_{T}^{s,t}(H^{*}(X), \Ft)$, where $T$ is a subalgebra of the Steenrod algebra $A$.    
\end{notn}


\subsection{Outline} To prove Main~Theorem~\ref{mainthm}, we use the fact that the spectrum $tmf$ detects certain $v_2$-periodic elements. More specifically, the unit map $\unit_{k(2)}:S^0\to k(2)$ factors through $tmf$, i.e. we have\[\unit_{k(2)}:S^0\overset{\unit_{tmf}}{\to}tmf\overset{r}{\to}k(2).\]The induced map in homotopy
\[r_*:tmf_* \to k(2)_*\]
sends $\Delta^{8}$, the periodicity generator of $tmf_*$, to $v_2^{32}$. Since $\A$ is a type $2$ spectrum, we know that $\Delta^8$ has a nonzero image under the composition 
\[ tmf_* \overset{r_*}{\longrightarrow} k(2)_* \overset{k(2)_*\unit}{\longrightarrow} k(2)_*(\A \sma D\A).\]
Therefore, from the commutative diagram 
\[ \xymatrix{
tmf_* \ar[rr]^-{tmf_*\unit } \ar[d]_{r_*} && tmf_*(\A \sma D\A) \ar[d]^{r_*(\A \sma D\A)} \\
k(2)_* \ar[rr]_-{k(2)_*\unit  }  && k(2)_*(\A \sma D\A)
}\] 
we see that $k(2)_*\unit(v_2^{32})$ lifts to $tmf_*(\A \sma D\A)$. We can choose the lift to be $tmf_*\unit(\Delta^8)$. This does not eliminate the possibility that smaller powers of $k(2)_*\unit(v_2)$ could lift to $tmf_*(\A \sma D\A)$. However, if $k(2)_*\unit(v_2^8)$ and $k(2)_*\unit(v_2^{16})$ do not lift to $tmf_*(\A \sma D\A)$, then they will not lift to $\pi_*(\A \sma D\A)$. So we analyse the map of Adams spectral sequences induced by $r:tmf \to k(2)$. 

It is well-known that $H^{*}(tmf)$ as an $A$-module is isomorphic to $A\modmod A(2)$, where $A(2)$ is the subalgebra of $A$ generated by $Sq^1, Sq^2$ and $Sq^4$. Therefore, applying a change of rings formula, we see that $Ext_{A(2)}^{s,t}(X)$ is the $E_2$ page of the Adams spectral sequence  
\[ E_2^{s,t} = Ext_{A(2)}^{s,t}(X) \Rightarrow tmf_{t-s}(X).\] 
Similarly, we have an Adams spectral sequence 
\[E_2^{s,t} = Ext_{E(Q_2)}^{s,t}(X) \Rightarrow k(2)_{t-s}(X), \]
which is a manifestation of the fact that $H^*(k(2)) = A\modmod E(Q_2)$.

The map $\unit: S^0 \to \A \sma D\A$ induces the following commutative diagram of spectral sequences 
\begin{equation}\label{maindiagram}\xymatrix{
Ext_{A(2)}^{s,t}(S^0)\ar[d]\ar[r]^-{\unit^{tmf}_*}&Ext_{A(2)}^{s,t}(\A\sma D\A)\ar[d]\\
Ext_{E(Q_2)}^{s,t}(S^0)\ar[r]^-{\unit^{k(2)}_*}&Ext_{E(Q_2)}^{s,t}(\A\sma D\A).
}\end{equation}
It is well known that \[v_2^8 \in Ext_{E(Q_2)}^{8,48+8}(S^0)\]is the image of the nonnilpotent element $b_{3,0}^4 \in Ext_{A(2)}^{8,48+8}(S^0)$ (see \cite{Bau, Hen}). Since $\A$ is a type $2$ spectrum, the element $\unit^{k(2)}_*(v_2^8) \in Ext_{E(Q_2)}^{8,48+8}(\A\sma D\A)$ is nonnilpotent. Consequently, \[\unit^{tmf}_*(b_{3,0}^4) \in Ext_{A(2)}^{8,48+8}(\A\sma D\A)\]is nonnilpotent. Thus, $\unit^{k(2)}_*(v_2^{8n})$ lifts to a nonzero element of $Ext_{A(2)}^{8n, 48n +8n}(\A \sma D\A)$ for every $n \in \mathbb{N}$, which can be chosen to be $\unit^{tmf}_*(b_{3,0}^{4n})$.

In Section~\ref{one}, we warm up by computing $Ext_{A(2)}^{s,t}(\A)$ using the May spectral sequence and compute its vanishing line for later use. In Section~\ref{two} we show that $\unit^{tmf}_*(b_{3,0}^4)$ admits a $d_2$ differential and $\unit^{tmf}_*(b_{3,0}^8)$ admits a $d_3$ differential in the Adams spectral sequence
\[ E_2^{s,t} = Ext^{s,t}_{A(2)}(\A \sma D \A) \To tmf_{t-s}(\A \sma D\A).\]
This will imply the nonexistence of a $8$-periodic or $16$-periodic $v_2$-self-map of $\A$. We will recall the algebraic $tmf$ resolution of \cite{BHHM} and use the resulting spectral sequence to show that for every $n \in \mathbb{N}$, the element $\unit^{tmf}_*(b_{3,0}^{4n})$ lifts to $Ext_A^{8n,48n + 8n}(\A \sma D\A)$ under the map induced by $H_{tmf}$. Furthermore, we show that the lifts of $\unit^{tmf}_*(b_{3,0}^4)$ and $\unit^{tmf}_*(b_{3,0}^8)$ support a $d_2$ and a $d_3$ differential respectively in the Adams spectral sequence 
\[E_2^{s,t} = Ext^{s,t}_{A}(\A \sma D\A) \To \pi_{t-s}(\A \sma D\A).\]
This extra effort enables us to identify some $d_2$ and $d_3$ differentials in the above spectral sequence, which will play a crucial role in the proof of the existence of a $32$-periodic $v_2$-self-map of $\A$. Thus, the existence of a $32$-periodic $v_2$-self-map of $\A$ boils down to showing that the lift of $\unit^{tmf}_*(b_{3,0}^{16})$, which we'll call $\sv$, is a permanent cycle in the Adams spectral sequence 
\[E_2^{s,t} = Ext^{s,t}_{A}(\A \sma D\A) \To \pi_{t-s}(\A \sma D\A).\]

Note that $\sv$ cannot be a target of a differential as its image in $Ext_{E(Q_2)}^{32,192+32}(\A \sma D\A)$ is not a target of a differential. Further, $\sv$ cannot support a nontrivial $d_2$ or $d_3$ differential by the Leibniz rule. In Section~\ref{four} we use all prior knowledge of $d_2$ and $d_3$ differentials, including an important $d_3$ differential found in Section~\ref{three}, to show that the potential targets of $d_r$ differentials for $r \geq 4$ are either zero or not present in the Adams $E_4$ page. This will conclude the proof of Main~Theorem~\ref{mainthm}.

\begin{notn}
For the rest of the paper, we will abusively denote any \linebreak $x \in Ext_{A(2)}^{s,t}(S^0)$ and $ \unit^{tmf}_*(x) \in Ext_{A(2)}^{s,t}(\A \sma D\A)$ and  sometimes their lifts in $Ext_{A}^{s,t}(S^0)$ and $Ext_{A}^{s,t}(\A \sma D\A)$ respectively under $H_{tmf*}$, just by $x$. This will allow us to suppress cumbersome notations. We will make sure that the ambient group in which $x$ belongs is clear from the context.     
\end{notn}

\subsection{Use of Bruner's Ext software}
We will use this software (see Appendix~\ref{appendix} or \cite{Bru} for a description of the program) for two purposes. Given any $A(2)$-module $M$, finitely generated as an $\Ft$-vector space, the program can compute the groups $Ext_{A(2)}^{s,t}(M, \Ft)$ to the extent of identifying generators in each bidegree within a finite range, determined by the user. Since we are interested in $Ext_{A(2)}^{s,t}(X)$ for finite spectra $X$, such as $\A\sma D\A$, whose cohomology structures as $A(2)$-modules are known, this suits our task perfectly. The second purpose is the following: As any finite spectrum $X$ is an $S^{0}$-module, $Ext_{A(2)}^{*,*}(X)$ is a module over $Ext_{A(2)}^{*,*}(S^{0})$. Given an element $x \in Ext_{A(2)}^{s,t}(X)$, the action of $Ext_{A(2)}^{*,*}(S^{0})$ can be computed using the \texttt{dolifts} functionality of the software. Summary of the output of the Bruner's program that is needed for some of the results in Section~\ref{three} and Section~\ref{four} are listed in Appendix~\ref{appendixB} and Appendix~\ref{appendixC} respectively.

One should also be aware that Main~Theorem~\ref{mainthm} is by no means a consequence of the programming output. However, parts of the proof are reduced to pure algebraic computation, which can be performed using Bruner's program.

\section{Computation of $Ext_{A(2)}^{s,t}(A_1)$ and its vanishing line}\label{one}

J.P. May in his thesis~\cite{May} introduced a filtration of the Steenrod algebra called the May filtration, which induces a filtration of the cobar complex $C(\Ft,A_*, \Ft)$. This filtration gives a trigraded spectral sequence
\[ E_1^{s,t,u} = \Ft[h_{i,j}: i\geq 1, j \geq 0] \Rightarrow Ext_{A}^{s,t}(S^0),|h_{i,j}|=(1,2^j(2^i-1),2i-1),\]
with differentials $d_r$ of tridegree $(1, 0, 1-2r)$,     which converges to the $E_2$ page of the Adams spectral sequence
\[Ext_{A}^{s,t}(S^0) \Rightarrow \pi_{t-s}(S^0).\]
The element $h_{i,j}$ corresponds to the class $[\xi_i^{2^j}]$ in the cobar complex $C(\Ft, A_*, \Ft)$. We stick to the notation introduced by Tangora in his thesis~\cite{Tan}. For example, $h_{1,j}$ is abbreviated by $h_j$. Meanwhile, there are many elements $h_{i,j}$ that are not $d_1$-cycles in the May spectral sequence, however, even in these cases, the Leibniz rule means that $h_{i,j}^2$ will be $d_1$-cycles. To get around the awkwardness of talking about $h_{i,j}^2$ in later pages of the May spectral sequence, where $h_{i,j}$ may not even exist, Tangora uses $b_{i,j}$ to denote $h_{i,j}^2$ from the $E_2$ page onwards.

One can use the same May filtration on the subalgebra $A(2)$ of $A$, to obtain a filtration on the cobar complex $C(\Ft, A(2)_*, \Ft)$. Thus we get a May spectral sequence with finitely many differentials
\[ \Ft[h_0,h_1,h_2,h_{2,0},h_{2,1},h_{3,0}] \Rightarrow Ext_{A(2)}^{s,t}(S^0)\]
all of which have been computed (see~\cite{DMext}). The bigraded ring $Ext_{A(2)}^{s,t}(S^0)$ is the Adams $E_2$ page for the homotopy groups of $tmf$. 

 We have obtained $\A$ by a series of cofibrations, 
\[ S^{1} \overset{\eta}{\to} S^{0} \to C\eta\]
\[ C\eta \overset{2}{\to} C\eta \to Y \]
and
\[ \Sigma^2Y \overset{v_1}{\to} Y \to\A.\]
The maps $2$, $\eta$ and $v_1$ are detected by $h_0$, $h_1$ and $h_{2,0}$, respectively, in the May spectral sequence. Using the fact that cofiber sequences induce long exact sequences of $E_1$ pages of the May spectral sequence, we get that the $E_1$ page of the May spectral sequence converging to 
$Ext_{A(2)}^{s,t}(\A)$ is 
\[ \Ft[h_2, h_{2,1}, h_{3,0}]\To Ext_{A(2)}^{s,t}(\A).\]
Alternatively, using a change of rings formula, we see that the cobar complex (whose cohomology is $Ext_{A(2)}^{s,t}(\A)$) is
\[C(\Ft, A(2)_*, A(1)_*)\iso C(\Ft,(A(2)\modmod A(1))_*,\Ft),\]
 hence a quotient of $C(\Ft, A(2)_*, \Ft)$. Thus, the filtration on $C(\Ft, A(2)_*, \Ft)$ induces a filtration on $C(\Ft, A(2)_*, A(1)_*)$ as a result of which $\Ft[h_2, h_{2,1}, h_{3,0}]$ is a module over $\Ft[h_0,h_1,h_2,h_{2,0},h_{2,1},h_{3,0}]$.

The $d_1$ differentials in the May spectral sequence 
\[ \Ft[h_0,h_1,h_2,h_{2,0},h_{2,1},h_{3,0}] \Rightarrow Ext_{A(2)}^{s,t}(S^0)\]
come from the coproduct on $A(2)_*$. It is well known that $d_1(h_2) =0$, $d_1(h_{2,1}) = h_1 h_{2}$ and $d_1(h_{3,0})=h_0h_{2,1} + h_2h_{2,0}$. Under the quotient map 
\[ \Ft[h_0,h_1,h_2,h_{2,0},h_{2,1},h_{3,0}] \twoheadrightarrow \Ft[h_2,h_{2,1},h_{3,0}] \]
all  the images of the above differentials map to zero. Therefore, there are no $d_1$ differentials in the May spectral sequence 
\[\Ft[h_2, h_{2,0}, h_{3,0}] \Rightarrow Ext_{A(2)}(A_1). \]  
One can use Nakamura's formula to compute higher May differentials. The operations $Sq_i$ on the cobar complex of $C(\Ft, A_*, \Ft)$, defined by $Sq_i(x)=x\cup_ix+\delta x\cup_{i+1}x$ (see \cite{Nak}), satisfy
\begin{eqnarray*}
Sq_0(h_{i,j})&=&h_{i,j}^2 \\
Sq_0(b_{i,j})&=& b_{i,j}^2 \\ 
Sq_1(h_{i,j})&=& h_{i,j+1}
\end{eqnarray*}
as well as Cartan's formulas (see~\cite[Proposition~4.4 and Proposition~4.5]{Nak})
\begin{eqnarray*}
Sq_0(xy)&=&Sq_0(x)Sq_0(y)\\
Sq_1(xy) &=& Sq_1(x)Sq_0(y) + Sq_0(x)Sq_1(y)
\end{eqnarray*}
whenever $x$ and $y$ are represented by elements in appropriate pages of the May spectral sequence.
In particular we have
\[Sq_1(x^2)=0\]
for every $x$.
The differential $\delta$ in the cobar complex $C(\Ft, A_*, \Ft)$, satisfies the relation 
\begin{equation} \label{eqn:nakformula}
\delta Sq_i = Sq_{i+1} \delta
\end{equation}
for $i \geq 0$ (see \cite[Lemma~4.1]{Nak}) and is often called Nakamura's formula in the literature.

Since the May spectral sequence is obtained by filtering the cobar complex, the above formula helps in detecting differentials in the May spectral sequence. Since the cobar complex \[C(\Ft, A(2)_*, A(1)_*)\iso C(\Ft,(A(2)\modmod A(1))_*,\Ft),\] is a quotient of $C(\Ft, A(2)_*, \Ft)$, we apply~\eqref{eqn:nakformula} to find differentials in the May spectral sequence for $A_1$.
\begin{lem}\label{MSS:ExtA2A1}
In the May spectral sequence\[\Ft[h_2, h_{2,1}, h_{3,0}]\Rightarrow Ext_{A(2)}^{s,t}(\A),\]we have 
\begin{itemize}
\item $d_2( b_{2,1}) = h_2^3$
\item $d_3( b_{3,0}) =  h_2^{2}h_{2,1}$
\item $d_4( b_{3,0}^2) = h_2b_{2,1}^2$
\end{itemize}
and the spectral sequence collapses at $E_5$.
\end{lem}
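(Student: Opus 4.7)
My plan is to derive each of the three differentials by applying Nakamura's formula $\delta \circ Sq_i = Sq_{i+1} \circ \delta$ at the level of the full cobar complex $C(\Ft, A_*, \Ft)$, and then projecting the resulting cobar expressions onto the quotient complex $C(\Ft, A(2)_*, A(1)_*) \cong C(\Ft, (A(2)\modmod A(1))_*, \Ft)$ that computes $Ext_{A(2)}^{s,t}(\A)$. Under this projection the classes $h_0, h_1, h_{2,0}$ vanish, since $\xi_1, \xi_1^2, \xi_2$ lie in the augmentation ideal of $A(1)_*$, and any $h_{i,j}$ with $\xi_i^{2^j} \notin A(2)_*$ also vanishes — in particular $h_3$ and $h_{2,2}$. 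The only additional inputs needed are the classical formulas $\delta h_{2,1} = h_1 h_2$ and $\delta h_{3,0} = h_0 h_{2,1} + h_2 h_{2,0}$, together with $Sq_0(h_{i,j}) = b_{i,j}$, $Sq_1(h_{i,j}) = h_{i,j+1}$, the Cartan formulas, and the identity $Sq_1(x^2) = 0$ already recalled in the excerpt.

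For $d_2(b_{2,1})$, Nakamura together with Cartan gives
\[ \delta b_{2,1} = Sq_1(h_1 h_2) = Sq_1(h_1)Sq_0(h_2) + Sq_0(h_1)Sq_1(h_2) = h_2^3 + h_1^2 h_3, \]
and the second summand vanishes under projection, leaving $\delta b_{2,1} = h_2^3$, whose May filtration is one less than $b_{2,1}$ — identifying a $d_2$. The same recipe applied to $h_{3,0}$ gives
\[ \delta b_{3,0} = Sq_1(h_0 h_{2,1} + h_2 h_{2,0}) = h_1 b_{2,1} + h_0^2 h_{2,2} + h_3 b_{2,0} + h_2^2 h_{2,1}, \]
and after projection only $h_2^2 h_{2,1}$ survives, with filtration drop two, giving $d_3(b_{3,0}) = h_2^2 h_{2,1}$. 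For $d_4(b_{3,0}^2)$, applying Nakamura once more produces $\delta(b_{3,0}^2) = Sq_1(\delta b_{3,0})$, and computing $Sq_1$ termwise on the four-term expression above (using $Sq_1(b_{i,j})=0$ and Cartan) leaves the single surviving summand $Sq_1(h_1 b_{2,1}) = Sq_1(h_1)Sq_0(b_{2,1}) = h_2 b_{2,1}^2$; the filtration drop of three identifies this as the $d_4$. That $b_{3,0}^2$ supports no earlier differential follows from the Leibniz rule, since $d_r(b_{3,0}^2) = 2 b_{3,0} d_r(b_{3,0}) \equiv 0 \pmod 2$ for $r = 2, 3$.

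The remaining collapse claim at $E_5$ I would handle by computing the $E_5$ page explicitly from the three differentials above together with all their Leibniz consequences, and comparing against the known computation of $Ext_{A(2)}^{s,t}(\A)$; the latter is available either through Bruner's Ext software (as indicated in the paper's appendix) or theoretically via the change-of-rings isomorphism $Ext_{A(2)}^{s,t}(\A) \cong Ext_{A(1)}^{s,t}(\Ft,\Ft)$, applied to $H^*(\A) \cong A(2)\modmod A(1)$. I expect this last bookkeeping step to be the main technical obstacle: one must verify bidegree-by-bidegree that the surviving dimensions on $E_5$ already match those of the known $Ext$, which forces all higher differentials $d_r$ for $r \geq 5$ to vanish.
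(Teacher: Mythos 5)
Your strategy is sound and close in spirit to the paper's, but you rederive more than the paper does. The paper obtains only $d_3(b_{3,0})=h_2^2h_{2,1}$ from Nakamura's formula (by exactly the computation you give), and simply \emph{imports} $d_2(b_{2,1})=h_2^3$ and $d_4(b_{3,0}^2)=h_2b_{2,1}^2$ from the fully computed May spectral sequence for $Ext_{A(2)}^{s,t}(S^0)$, observing that their targets survive the quotient map $\Ft[h_0,h_1,h_2,h_{2,0},h_{2,1},h_{3,0}]\twoheadrightarrow\Ft[h_2,h_{2,1},h_{3,0}]$. Your Nakamura--Cartan derivations of those two differentials are correct ($Sq_1(h_1h_2)=h_2^3+h_1^2h_3$ and the four-term $Sq_1$ computation both check out, and the discarded terms do vanish for the reasons you state), so your version is more self-contained; for the $d_4$ you should add a word on why $Sq_1$ applied to the higher-May-filtration tail of $\delta b_{3,0}$ cannot contribute in filtration at most that of $h_2b_{2,1}^2$. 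For the collapse at $E_5$ the paper is much more economical: the May spectral sequence for $Ext_{A(2)}^{s,t}(S^0)$ supports no $d_r$ for $r\geq 5$, and since the complex for $\A$ is a quotient of (and a module over) that one, no further differentials can occur. Your plan of computing $E_5$ explicitly and matching dimensions against the machine-computed answer would also work --- the paper itself cross-checks Figure~\ref{SSeq} against Bruner's output --- but it is the long way around.

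Two corrections. The change-of-rings identity you invoke, $Ext_{A(2)}^{s,t}(\A)\cong Ext_{A(1)}^{s,t}(\Ft,\Ft)$, is false: $H^*(\A)$ is free over $A(1)$ on one generator and hence concentrated in degrees $0$ through $6$, so it cannot be isomorphic as an $A(2)$-module to $A(2)\modmod A(1)$ (whose top class sits in degree $17$); moreover $Ext_{A(1)}^{s,t}(\Ft,\Ft)$ is the $ko$ chart, with infinite $h_0$-towers that $Ext_{A(2)}^{s,t}(\A)$ visibly lacks. The correct statement, used in the paper, is that the relevant cobar complex is $C(\Ft,(A(2)\modmod A(1))_*,\Ft)$, a quotient of $C(\Ft,A(2)_*,\Ft)$ --- Cotor over the quotient coalgebra, not Ext over the subalgebra. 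This only damages one of your two proposed routes to the ``known answer''; the Bruner route is fine. Separately, your filtration bookkeeping (``one less,'' ``two,'' ``three'') does not match the convention in force, under which $d_r$ has tridegree $(1,0,1-2r)$ and so drops May weight by $3$, $5$, $7$ for $d_2$, $d_3$, $d_4$; your page assignments are nonetheless correct.
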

\begin{proof}
In the May spectral sequence 
\begin{equation} \label{ss:tmf}
\Ft[h_0,h_1,h_2,h_{2,0},h_{2,1},h_{3,0}] \Rightarrow Ext_{A(2)}^{s,t}(S^0) 
\end{equation}
the differentials $d_2( b_{2,1}) = h_2^3$ and $d_4( b_{3,0}^2) = h_2b_{2,1}^2$ translate into differentials in $Ext_{A(2)}(A_1)$. In the cobar complex, $b_{3,0}$ is represented by the element $[\xi_{3}|\xi_3]$. Since $b_{3,0} = Sq_0 h_{3,0}$, we apply~\eqref{eqn:nakformula}, to obtain
\begin{eqnarray*}
 d_3(Sq_0 h_{3,0}) &=& Sq_1 (d_1h_{3,0}) \\
 &=& Sq_1(h_0h_{2,1}+h_2h_{2,0})\\
 &=&  h_0^2h_{2,2} + h_1h_{2,1}^2 + h_2^2h_{2,1} + h_3h_{2,0}^2\\
 &=&h_2^2h_{2,1} \ \ \ \text{in the May spectral sequence for $A_1$.}
\end{eqnarray*}
Therefore, in the cobar complex $C(\Ft, A(2)_*, A(1)_*)$, it must be the case that, 
\[ \delta([\xi_3| \xi_3]) = [\xi_1^2|\xi_1^2| \xi_2^2] + \text{elements of higher May filtration.}\]
As a result we have 
\[d_3(b_{3,0})=h_2^2h_{2,1}.\]
The May spectral sequence~\ref{ss:tmf} does not have any differentials $d_r$ for $r \geq 5$, consequently no differentials in the May spectral sequence 
\[ \Ft[h_2, h_{2,1}, h_{3,0}] \Rightarrow Ext_{A(2)}^{s,t}(\A).\]
\end{proof}
\begin{figure}[h]
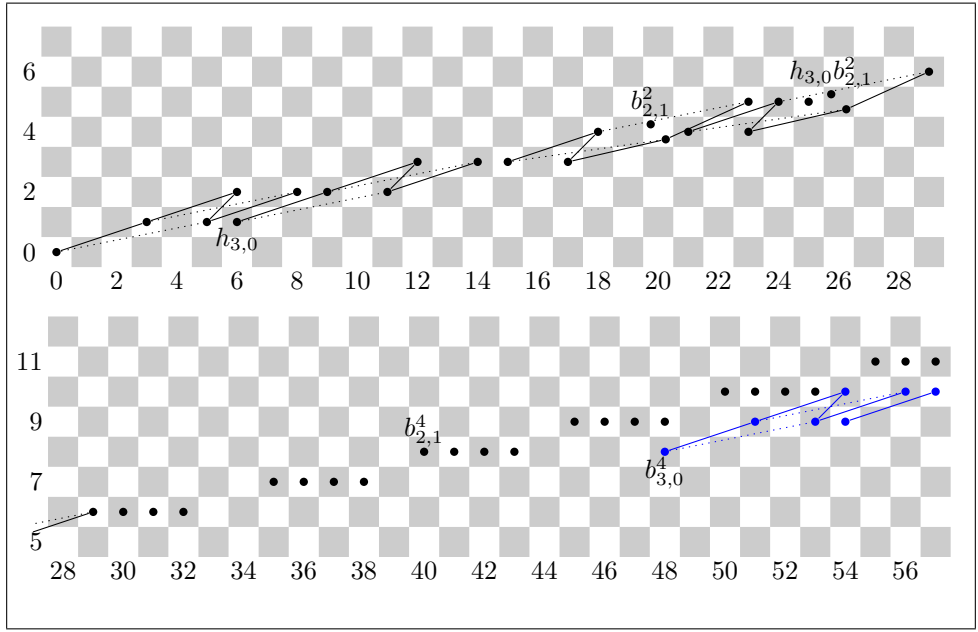

\begin{sseq}[grid= chess]{30}{8} 
\ssdropbull \ssname{g}
\onetwoH \ssname{h1} \onetwoH \ssname{h2} \oneHdiv \ssname{h3} \onetwoH \ssname{h4}
\ssgoto{g} \ssgoto{h3} \ssstroke[dotted]
\ssgoto{h1} \ssgoto{h4} \ssstroke[dotted]
\ssgoto{g} 
\kbm \ssname{k}\ssdroplabel[U]{b_{2,1}^2} \twooneH \ssgoto{k}
\kbm \ssname{k} \ssdroplabel[U]{b_{2,1}^4}\twooneH \ssgoto{k}
\ssmoveto 6 1
\ssdropbull  \ssname{v2}\ssdroplabel[D]{h_{3,0}}
\onetwoH \ssname{h1} \onetwoH \ssname{h2} \oneHdiv \ssname{h3} \onetwoH \ssname{h4}
\ssgoto{v2} \ssgoto{h3} \ssstroke[dotted]
\ssgoto{h1} \ssgoto{h4} \ssstroke[dotted]
\ssgoto{v2} 
\kbm \ssname{k}\ssdroplabel[U]{h_{3,0}b_{2,1}^2} 
\ssmoveto {15} {3}
\ssdropbull \ssname{h1} \onetwoH \ssname{h2} \oneHdiv \ssname{h3} \onetwoH \ssname{h4} \onetwoH \ssname{h5}
\ssgoto{h1} \ssgoto{h4} \ssstroke[dotted]
\ssgoto{h2} \ssgoto{h5} \ssstroke[dotted]
\ssmove {12} 2
\ssdropbull \ssname{k} \ssmove {2} 0 \ssdropbull
\ssmoveto {21} {4}
\ssdropbull \ssname{h1} \onetwoH \ssname{h2} \oneHdiv \ssname{h3} \onetwoH \ssname{h4} \onetwoH \ssname{h5}
\ssgoto{h1} \ssgoto{h4} \ssstroke[dotted]
\ssgoto{h2} \ssgoto{h5} \ssstroke[dotted]
\ssmove {12} 2
\ssdropbull \ssname{k} \ssmove {2} 0 \ssdropbull
\end{sseq}\\
\begin{sseq}[grid = chess]{28...57}{5...12}
\ssdropbull \ssname{g} 
\onetwoH \ssname{h1} \onetwoH \ssname{h2} \oneHdiv \ssname{h3} \onetwoH \ssname{h4}
\ssgoto{g} \ssgoto{h3} \ssstroke[dotted]
\ssgoto{h1} \ssgoto{h4} \ssstroke[dotted]
\ssgoto{g} 
\kbm \ssname{k}\ssdroplabel[U]{b_{2,1}^2} \twooneH \ssgoto{k}
\kbm \ssname{k} \ssdroplabel[U]{b_{2,1}^4}\twooneH \ssgoto{k}

\ssmoveto 6 1
\ssdropbull  \ssname{v2}
\onetwoH \ssname{h1} \onetwoH \ssname{h2} \oneHdiv \ssname{h3} \onetwoH \ssname{h4}
\ssgoto{v2} \ssgoto{h3} \ssstroke[dotted]
\ssgoto{h1} \ssgoto{h4} \ssstroke[dotted]
\ssgoto{v2} 
\kbm \ssname{k} \twooneH \ssgoto{k}
\kbm \ssname{k} \twooneH \ssgoto{k}
\ssmoveto {15} {3}
\ssdropbull \ssname{h1} \onetwoH \ssname{h2} \oneHdiv \ssname{h3} \onetwoH \ssname{h4} \onetwoH \ssname{h5}
\ssgoto{h1} \ssgoto{h4} \ssstroke[dotted]
\ssgoto{h2} \ssgoto{h5} \ssstroke[dotted]
\ssmove {12} 2
\ssdropbull \ssname{k} \ssmove {2} 0 \ssdropbull
\ssmoveto {21} {4}
\ssdropbull \ssname{h1} \onetwoH \ssname{h2} \oneHdiv \ssname{h3} \onetwoH \ssname{h4} \onetwoH \ssname{h5}
\ssgoto{h1} \ssgoto{h4} \ssstroke[dotted]
\ssgoto{h2} \ssgoto{h5} \ssstroke[dotted]
\ssmove {12} 2
\ssdropbull \ssname{k} \ssmove {2} 0 \ssdropbull
\ssmoveto {30} {6}
\ssdropbull \ssname{h2} \ssmove {2} 0
\ssdropbull \ssgoto{h2}
\kbm \ssmove {2} 0
\ssdropbull
\ssmoveto {36} {7}
\ssdropbull \ssname{h2} \ssmove {2} 0
\ssdropbull \ssgoto{h2}
\kbm \ssmove {2} 0
\ssdropbull
\ssmoveto {42} {8}
\ssdropbull \ssname{h2} \twooneH 
\ssmoveto {48} {9}
\ssdropbull \ssname{h2} \twooneH
\bl
\ssmoveto {48} {8}
\ssdropbull \ssname{g} \ssdroplabel[D]{b_{3,0}^4}
\onetwoH \ssname{h1} \onetwoH \ssname{h2} \oneHdiv \ssname{h3} \onetwoH \ssname{h4}
\ssgoto{g} \ssgoto{h3} \ssstroke[dotted]
\ssgoto{h1} \ssgoto{h4} \ssstroke[dotted]
\ssgoto{g} 
\kbm \ssname{k} \twooneH \ssgoto{k}
\kbm \ssname{k} \twooneH \ssgoto{k}
\ssmoveto {54} 9
\ssdropbull  \ssname{v2}
\onetwoH \ssname{h1}
\black
\ssmoveto {55} {11}
\ssdropbull \ssmove 2 0 \ssdropbull
\end{sseq}
\caption{The $E_{\infty}$-page of the May spectral sequence for $Ext_{A(2)}(A_1)$.}
\label{SSeq}
\end{figure}

In Figure~\ref{SSeq}, the solid line of slope $1$ represents multiplication by $h_1$, the solid line of slope $\frac{1}{3}$ represents multiplication by $h_2$, while the dotted line of slope $\frac{1}{5}$ represents multiplication by $h_{2,1}$. The element $b_{3,0}^4$ is the periodicity generator of $Ext_{A(2)}^{*,*}(\A)$ and the blue part is simply a repetition of the earlier black pattern. This matches the output of Bruner's program~\cite{Bru} for $Ext_{A(2)}^{s,t}(A_1)$, though different models of $\A$ may have different extensions some of which might not be detected in the May spectral sequence.   

Having computed the $E_2$ page $Ext_{A(2)}^{s,t}(A_1)$, we give a vanishing line of this spectral sequence, which will come in handy later on in the paper.  
\begin{lem} \label{vanishA2}
The group $Ext_{A(2)}^{s,t}(A_1)$ is zero if\[s>\frac{1}{5}(t-s)+1,\]and for $t-s\geq 29$, it is zero if\[s>\frac{1}{5}(t-s).\]
In other words, there is a vanishing line\[y=\frac{1}{5}x+1.\]
\end{lem}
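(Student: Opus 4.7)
The plan is to read off the vanishing line directly from the $E_\infty$ page of the May spectral sequence computed in Lemma~\ref{MSS:ExtA2A1} and displayed in Figure~\ref{SSeq}, and then propagate the estimate to all stems using $b_{3,0}^4$-periodicity. First, I would restrict attention to the fundamental domain $0 \le t-s < 48$ (with $b_{3,0}^4$, of bidegree $(48,8)$, serving as the periodicity generator) and enumerate the nonzero bidegrees on the chart. By direct inspection, the topmost surviving class in each stem of the fundamental domain satisfies $s \le \tfrac{1}{5}(t-s) + 1$, and moreover for $t-s \ge 29$ the topmost class satisfies the strictly sharper bound $s \le \tfrac{1}{5}(t-s)$. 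The extremal classes along the slope-$1/5$ edge are the $h_{2,1}$-multiples; anything that would lie strictly above the line $y = x/5 + 1$ is killed by one of the May differentials $d_2(b_{2,1}) = h_2^3$, $d_3(b_{3,0}) = h_2^2 h_{2,1}$, or $d_4(b_{3,0}^2) = h_2 b_{2,1}^2$ already recorded in Lemma~\ref{MSS:ExtA2A1}.

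Second, I would leverage the $b_{3,0}^4$-periodicity to extend both bounds to all stems. The key numerical point is that the periodicity generator $b_{3,0}^4$ has slope $8/48 = 1/6$, which is strictly smaller than $1/5$. Hence if a class $x$ in the fundamental domain satisfies $s(x) \le \tfrac{1}{5}(t-s)(x) + c$, then the class $b_{3,0}^{4n} \cdot x$ satisfies
\[
s(b_{3,0}^{4n} \cdot x) = s(x) + 8n \le \tfrac{1}{5}(t-s)(x) + c + 8n = \tfrac{1}{5}\bigl((t-s)(x) + 48n\bigr) + c - \tfrac{8n}{5},
\]
so the intercept strictly decreases with each application of $b_{3,0}^4$. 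Consequently, the bound $s \le \tfrac{1}{5}(t-s) + 1$ on the fundamental domain implies the same bound globally, and similarly the bound $s \le \tfrac{1}{5}(t-s)$ valid for $t-s \ge 29$ propagates to all $t-s \ge 29$.

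The only real work, and the potential obstacle, is the finite verification on the fundamental domain: one must make sure no class has been overlooked on or just above the putative vanishing line. I would cross-check the May spectral sequence chart against the output of Bruner's Ext software (referenced in the paper) to confirm there is no class in the range $0 \le t-s < 48$ with $s > \tfrac{1}{5}(t-s) + 1$, and no class with $t-s \ge 29$ and $s > \tfrac{1}{5}(t-s)$. Once this bookkeeping is done, the proof reduces to the two lines of arithmetic above, and the vanishing line $y = \tfrac{1}{5}x + 1$ follows.
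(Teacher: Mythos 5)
Your overall strategy---read the vanishing line off the May $E_\infty$ page of Lemma~\ref{MSS:ExtA2A1} and then handle infinitely many stems by a periodicity/slope argument---is in the same spirit as the paper's proof, and your arithmetic showing that multiplication by $b_{3,0}^4$ (slope $\tfrac{1}{6} < \tfrac{1}{5}$) strictly lowers the intercept is correct. But the propagation step has a genuine gap: it is not true that every class of $Ext_{A(2)}^{s,t}(A_1)$ is of the form $b_{3,0}^{4n}\cdot x$ with $x$ in the fundamental domain $0 \le t-s < 48$. The $E_\infty$ page contains infinite families of $b_{2,1}^2$-power multiples (the classes $b_{2,1}^{2k}$ and their $h_{2,1}$-, $h_{3,0}$-multiples), which march off to infinity along lines of slope exactly $\tfrac{1}{5}$ and are \emph{not} $b_{3,0}^4$-divisible. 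Concretely, $g^6 = b_{2,1}^{12}$ is nonzero in $Ext_{A(2)}^{24,120+24}(A_1)$ (the paper uses exactly this in Section~\ref{three}), it sits at $(t-s,s)=(120,24)$, i.e.\ precisely on the line $y=\tfrac{1}{5}x$, and it cannot equal $b_{3,0}^4\cdot z$ because $z$ would have to live at $(72,16)$, above the vanishing line. These slope-$\tfrac{1}{5}$ families are precisely the classes that \emph{determine} the vanishing line, and your finite check on $0\le t-s<48$ plus $b_{3,0}^4$-periodicity says nothing about them beyond stem $47$ --- for instance it does not rule out a nonzero $h_2\cdot b_{2,1}^{2k}$ at $(20k+3,\,4k+1)$ for large $k$, which would violate the sharper bound $s\le\tfrac{1}{5}(t-s)$. (The figure itself reflects this: the black classes in stems $48$--$57$ of the second chart are continuations of the $b_{2,1}^2$-towers and are not part of the blue $b_{3,0}^4$-translated pattern.)

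The paper closes exactly this gap by arguing on the level of the polynomial $E_1$ page $\Ft[h_2,h_{2,1},h_{3,0}]$: among the three generators only $h_2$ has slope greater than $\tfrac{1}{5}$, and the relations $h_2^3=0$ (from $d_2(b_{2,1})=h_2^3$) and $h_2 b_{2,1}^2=0$ (from $d_4(b_{3,0}^2)=h_2 b_{2,1}^2$) bound how far above the slope-$\tfrac{1}{5}$ line any surviving monomial $h_2^a h_{2,1}^b h_{3,0}^c$ can sit, uniformly in $b$ and $c$. To repair your argument you would need to replace ``$b_{3,0}^4$-periodicity'' by an argument covering all monomials in $h_2$, $h_{2,1}$, $h_{3,0}$ --- e.g.\ treat the $E_\infty$ page as a finitely generated module over $\Ft[b_{2,1}^2, b_{3,0}]$ (both of whose generators have slope $\le\tfrac{1}{5}$) and do the finite check on a set of module generators --- which is in substance the paper's proof.
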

\begin{proof}
Of the three generators of the $E_1$ page, $h_2$ has slope $\frac{1}{3}$, $h_{2,1}$ has slope $\frac{1}{5}$, and $h_{3,0}$ has slope $\frac{1}{6}$. However, while $Ext_{A(2)}^{s,t}(A_1)$ contains infinitely large powers of $h_{2,1}$ and $h_{3,0}$, it only contains powers up to 2 of $h_2$. Hence, the vanishing line of $Ext_{A(2)}^{s,t}(A_1)$ must have slope $\frac{1}{5}$, determined by $b_{2,1}^2$. Now, since $h_2b_{2,1}^2=0$, the vanishing line for stems greater than 29 is $y=\frac{1}{5}x$ and a glance at Figure~\ref{SSeq} gives us the $y$-intercept of the overall vanishing line.
\end{proof}

\section{A $d_2$ and a $d_3$ differential}\label{two}
In this section we first show that $b_{3,0}^4$ and $b_{3,0}^8$ in $Ext^{s,t}_{A(2)}(\A \sma D\A)$ support a $d_2$ and a $d_3$ differential respectively. Then we show that these differentials lift to $Ext_A^{s,t}(\A \sma D\A)$ under the map of spectral sequences induced by $H_{tmf}$.
Some of the proofs in this section as well as in the subsequent sections use Bruner's program \cite{Bru}. We provide Appendix~\ref{appendix} to help 
readers familiarize themselves with this software. 

In the Adams spectral sequence
\[ E_2^{s,t}=Ext_{A(2)}^{s,t}(S^0) \To tmf_{t-s}\]  
it is well known that $d_2(b_{3,0}^4) = e_0r$ and $d_3(b_{3,0}^8) = wgr$ (see \cite{Hen}). Using Bruner's program, we see that $e_0r$ and $wgr$ both have nonzero images in $Ext_{A(2)}^{s,t}(\A \sma D\A)$.
\begin{lem}\label{not8or16} In the Adams spectral sequence 
\[ E_2^{s,t}=Ext_{A(2)}^{s,t}(\A \sma D\A) \To tmf_{t-s}(\A\sma D\A)\]
we have $d_2(b_{3,0}^4) = e_0r$ and $d_3(b_{3,0}^8)=wgr$.
\end{lem}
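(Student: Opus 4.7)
The plan is to deduce the two differentials from their counterparts in the Adams spectral sequence for $tmf_*(S^0)$ via naturality with respect to the unit map $\unit \colon S^0 \to \A \sma D\A$.

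First, I would record the input from the literature (e.g.\ \cite{Hen}, \cite{Bau}): in the Adams spectral sequence $E_2^{s,t} = Ext_{A(2)}^{s,t}(S^0) \Rightarrow tmf_{t-s}$ one has
\[
d_2(b_{3,0}^4) = e_0 r, \qquad d_3(b_{3,0}^8) = w g r.
\]
Here $b_{3,0}^4$ is already a $d_2$-cycle in the May spectral sequence (Lemma~\ref{MSS:ExtA2A1} kills $b_{3,0}^2$ against $h_2 b_{2,1}^2$ but leaves $b_{3,0}^4$ alone), and the element $b_{3,0}^8 = (b_{3,0}^4)^2$ is a $d_2$-cycle by the Leibniz rule in characteristic $2$, so both classes are defined on the pages where they are claimed to support differentials.

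Next, the unit map $\unit$ induces a map of Adams spectral sequences. By naturality of differentials, if $\unit_*(b_{3,0}^4)$ and $\unit_*(e_0 r)$ are nonzero on $E_2$ for $\A \sma D\A$, then $d_2(\unit_*(b_{3,0}^4)) = \unit_*(e_0 r)$, and similarly for the $d_3$ after one additionally verifies that $\unit_*(b_{3,0}^8)$ survives to $E_3$. The survival of $\unit_*(b_{3,0}^8)$ is automatic: it is a $d_2$-cycle by Leibniz, and it cannot be a $d_2$-boundary because its bidegree $(s,t-s)=(16,176)$ lies on the edge of the periodic pattern identified in Figure~\ref{SSeq}, where no classes of bidegree $(14, 177)$ are available to hit it. (The abusive notation convention of the excerpt is what lets us continue to write $b_{3,0}^4$, $b_{3,0}^8$, $e_0r$, $wgr$ for their images under $\unit_*^{tmf}$.)

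The remaining, and only substantive, step is to verify the non-vanishing of $\unit_*(e_0 r)$ and $\unit_*(wgr)$ in $Ext_{A(2)}^{s,t}(\A \sma D\A)$. This is a finite, purely algebraic check in two specific bidegrees, and it is exactly the kind of task Bruner's program is designed for: one feeds it the $A(2)$-module structure of $H^*(\A \sma D\A)$, computes the map $Ext_{A(2)}^{*,*}(S^0) \to Ext_{A(2)}^{*,*}(\A \sma D\A)$ in the bidegrees of $e_0 r$ and $w g r$, and reads off that both images are nonzero. This is the main (and really the only) obstacle, and the output of the program in Appendix~\ref{appendixB} settles it.
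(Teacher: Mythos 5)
Your argument for the $d_2$ matches the paper's: naturality of the Adams spectral sequence along $\unit$, plus a Bruner check that $e_0r$ has nonzero image in $Ext_{A(2)}^{10,47+10}(\A\sma D\A)$. For the $d_3$, however, there is a genuine gap. You verify that the \emph{source} $b_{3,0}^8$ survives to $E_3$, but you never verify that the \emph{target} $wgr$ survives to $E_3$ of the Adams spectral sequence for $tmf_*(\A\sma D\A)$. Naturality on the $E_3$ page only yields $d_3(b_{3,0}^8)=[\unit_*(wgr)]$, the class of $\unit_*(wgr)$ in $E_3$; if $\unit_*(wgr)$ were a $d_2$-boundary for $\A\sma D\A$, that class would be zero and the lemma would fail. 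This is not ruled out by $wgr$ being nonzero on $E_2$, nor by its surviving for $S^0$: the group $Ext_{A(2)}^{17,96+17}(\A\sma D\A)$ contains classes that are not in the image of $\unit_*$, and any of them could a priori support $d_2(x)=wgr$. Your proposal identifies the Bruner computation of the two nonvanishing statements as ``the only substantive step,'' but ruling out $d_2(x)=wgr$ is where the paper does its real work: it pushes forward along $j_*$ to $Ext_{A(2)}^{17,96+17}(\A)$, observes that this group has a single generator $b_{3,0}^4\cdot y$, computes $d_2(b_{3,0}^4\cdot y)=e_0r\cdot y$ by the Leibniz rule (since $d_2(y)=0$ for degree reasons), and checks with Bruner's program that $e_0r\cdot y=0$, so that no $d_2$ can hit $wgr$. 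Without some version of this step your proof of the $d_3$ is incomplete.

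Two smaller inaccuracies: $b_{3,0}^8$ sits in $(s,t-s)=(16,96)$, not $(16,176)$, so a potential $d_2$ hitting it would originate in bidegree $(14,97)$; and Figure~\ref{SSeq} is the May $E_\infty$-page for $Ext_{A(2)}(\A)$, not for $\A\sma D\A$, so it cannot be quoted directly to locate or exclude classes in the spectral sequence you are actually running -- any such exclusion has to be carried out for $\A\sma D\A$ (or transported there via $j_*$, as the paper does for the target of the $d_3$).
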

\begin{proof}
In the map of Adams spectral sequences,
\[ \xymatrix{
E_2^{s,t} = Ext_{A(2)}^{s,t}(S^0) \ar[d]\ \ar@{=>}[r] & tmf_{t-s}  \ar[d] \\
E_2^{s,t} = Ext_{A(2)}^{s,t}(\A \sma D\A) \ar@{=>}[r] &  tmf_{t-s} (\A \sma D\A)
}\]
 we have established that (beware of our abusive notations as explained in Notation~\ref{subsec:not})
 \begin{eqnarray*}
 Ext_{A(2)}^{s,t}(S^0)&\overset{\unit_{*}^{tmf}}{\longrightarrow}&Ext_{A(2)}^{s,t}(\A\sma D\A)\\
 b_{3,0}^4&\mapsto&b_{3,0}^4\\
 b_{3,0}^8&\mapsto&b_{3,0}^8 \\
e_0r&\mapsto& e_0r\\
wgr & \mapsto& wgr.
 \end{eqnarray*}
Since $d_2(b_{3,0}^4) = e_0r$ in the Adams spectral sequence for $tmf_*$, it follows that we have a $d_2$-differential\[d_2(b_{3,0}^4) = e_0r.\]
As a consequence of the Leibniz rule, $d_2(b_{3,0}^8) = 0$ and hence $b_{3,0}^8$ and its image under $\unit_{*}^{tmf}$ are nonzero elements in the $E_3$ pages of Adams spectral sequences for $tmf_*$ and $tmf_*(\A \sma D\A)$, respectively.

Since there is a $d_3$ differential $d_3(b_{3,0}^8) = wgr$ in the Adams spectral sequence for $tmf_*$, it will follow that $b_{3,0}^8$ supports a $d_3$-differential in the Adams spectral sequence for $tmf_*(\A \sma D\A)$, provided the image of $wgr$ is nonzero in the $E_3$-page of the Adams spectral sequence for $tmf_*(\A \sma D\A)$. Thus we have to show that there does not exist a differential of the form $d_2(x) = wgr$. 

Using Bruner's program \cite{Bru}, we check that $wgr\in Ext_{A(2)}^{19,95+19}(S^0)$ maps nontrivially to $Ext_{A(2)}^{19,95+19}(\A)$. Thus, if there exists an $x$ such that $d_2(x) = wgr$ in\[Ext_{A(2)}^{s,t}(\A \sma D\A)\To tmf_{t-s}(\A\sma D\A),\]then the image of $x$, call it $x'$, must be nontrivial under the map 
\[j_*: Ext_{A(2)}^{17,96+17}(\A \sma D\A) \to Ext_{A(2)}^{17,96+17}(\A)\]
and we will have $d_2(x') = wgr$ in\[Ext_{A(2)}^{s,t}(A_1)\To tmf_{t-s}(\A).\]There is exactly one generator of $Ext_{A(2)}^{17,96+17}(\A)$, and that generator is $b_{3,0}^4\cdot y$ under the pairing 
\[Ext_{A(2)}^{8,48+8}(S^0)\otimes Ext_{A(2)}^{9,48+9}(\A)\longrightarrow  Ext_{A(2)}^{17,96+17}(\A).\] 
It is clear that $d_2(y) = 0$ as $Ext_{A(2)}^{11,47+11}(\A)=0$ (see Chart~\ref{SSeq}). Thus using the Leibniz rule, we see that 
\[ d_2(b_{3,0}^4y) = e_0r\cdot y.\]
Using \cite{Bru}, we check that $e_0r\cdot y =0$. Therefore, $wgr$ is nonzero in the $E_3$-page of the spectral sequence
\[ Ext_{A(2)}^{s,t}(\A \sma D\A) \To tmf_{t-s}(\A \sma D\A),\]
and therefore
\[d_3(b_{3,0}^8) = wgr\]in this spectral sequence.
\end{proof}
As a consequence of Lemma~\ref{not8or16}, we see that $v_2^8$ and $v_2^{16}$ in $k(2)_*(\A \sma D\A)$ do not lift to $tmf_*(\A \sma D\A)$ and hence cannot lift to $\pi_*(\A \sma D\A)$. Thus we have established:
\begin{thm}\label{nonexistence}The spectra $\A$ do not admit an $8$-periodic or $16$-periodic $v_2$-self-map.\end{thm}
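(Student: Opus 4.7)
The theorem is essentially a direct consequence of Lemma~\ref{not8or16}, and the plan is to combine it with the strategy laid out in the introduction via a contradiction argument.

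Suppose for a contradiction that $\A$ admits a $v_2$-self-map of periodicity $8n$ for some $n \in \{1, 2\}$. By definition such a self-map corresponds to an element $\tilde v \in \pi_{48n}(\A \sma D\A)$ with $H_{k(2)}(\tilde v) = \unit^{k(2)}_*(v_2^{8n})$. Using the factorization $\unit_{k(2)} = r \circ \unit_{tmf}$, the Hurewicz image $H_{tmf}(\tilde v) \in tmf_{48n}(\A \sma D\A)$ provides a lift of $\unit^{k(2)}_*(v_2^{8n})$ along $r_*$. It therefore suffices to show that no such lift exists in $tmf_*(\A \sma D\A)$, and this is what the rest of the argument accomplishes.

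To rule out such a lift, the plan is to pass to the map of Adams spectral sequences from the one converging to $tmf_*(\A \sma D\A)$ to the $k(2)$-based one. Since $\A \sma D\A$ is of type $2$, the $k(2)$-Adams spectral sequence collapses at $E_2$, and $\unit^{k(2)}_*(v_2^{8n})$ is represented at Adams filtration exactly $8n$ with no higher-filtration ambiguity. Because the map of Adams spectral sequences preserves filtration, any lift in $tmf_*(\A \sma D\A)$ must be detected at filtration $8n$ by a permanent cycle in $Ext_{A(2)}^{8n, 48n+8n}(\A \sma D\A)$ projecting to $\unit^{k(2)}_*(v_2^{8n})$. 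The canonical such class, as noted in the introduction, is $\unit^{tmf}_*(b_{3,0}^{4n})$, abbreviated $b_{3,0}^{4n}$ per the paper's convention.

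Lemma~\ref{not8or16} now furnishes the contradiction: for $n = 1$ it gives $d_2(b_{3,0}^4) = e_0 r \neq 0$, and for $n = 2$ it gives $d_3(b_{3,0}^8) = wgr \neq 0$. Hence $b_{3,0}^{4n}$ is not a permanent cycle for $n = 1, 2$, so $\unit^{k(2)}_*(v_2^{8n})$ does not lift to $tmf_*(\A \sma D\A)$, contradicting the existence of $\tilde v$. The main obstacle has already been absorbed into Lemma~\ref{not8or16}: its proof required checking via Bruner's program that the differential targets $e_0 r$ and $wgr$ map nontrivially into $Ext_{A(2)}^{*,*}(\A \sma D\A)$ and are not themselves hit by an earlier differential. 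Granted this, the theorem follows from the filtration-preservation argument above.
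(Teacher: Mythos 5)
Your argument is correct and is essentially the paper's own proof: the paper likewise deduces the theorem directly from Lemma~\ref{not8or16} together with the factorization $\unit_{k(2)} = r\circ\unit_{tmf}$, observing that a $v_2$-self-map of periodicity $8n$ would force $\unit^{k(2)}_*(v_2^{8n})$ to lift through $tmf_*(\A\sma D\A)$, which the $d_2$ on $b_{3,0}^4$ and the $d_3$ on $b_{3,0}^8$ preclude. You merely spell out the Adams-filtration bookkeeping that the paper leaves implicit, so there is nothing to add.
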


Next we describe an algebraic resolution which will allow us to lift the $d_2$ differential and the $d_3$ differential of Lemma~\ref{not8or16} to the Adams spectral sequence 
\[ E_2^{s,t} = Ext_A^{s,t}(\A \sma D\A) \To \pi_{t-s}(\A \sma D\A).\]

We will briefly recall the resolution described in  \cite[Section~$5$]{BHHM}, and how it is used to lift elements of Ext groups over $A(2)$ to Ext groups over $A$. Consider the $A$-module\[A\modmod A(2):=A\otimes_{A(2)}\Ft\]and denote by $\overline{A\modmod A(2)}$ the kernel of the augmentation map\[A\modmod A(2)\to\Ft.\]
When we consider the triangulated structure of the derived category of $A$-modules, we get maps\[A\modmod A(2)\to\Ft\to\overline{A\modmod A(2)}[1],\]and a resulting diagram
\[\xymatrix{
\Ft\ar[r]&\overline{A\modmod A(2)}[1]\ar[r]&\overline{A\modmod A(2)}^{\otimes 2}[2]\ar[r]&\cdots\\
A\modmod A(2)\ar[u]&A\modmod A(2)\otimes\overline{A\modmod A(2)}[1]\ar[u]&A\modmod A(2)\otimes\overline{A\modmod A(2)}^{\otimes 2}[2]\ar[u]
}\]to which we shall apply the functor $Ext_A^{s,t}(H^*(X)\otimes -,\Ft)$ to get a spectral sequence, which we shall refer to as the algebraic $tmf$ spectral sequence to reflect the fact that $A\modmod A(2)$ is the cohomology of $tmf$. This spectral sequence will be trigraded, with $E_1$ page
\begin{eqnarray*}E_1^{s,t,n}&=&Ext_A^{s,t}(H^*(X)\otimes A\modmod A(2)\otimes\overline{A\modmod A(2)}^{\otimes n}[n],\Ft)\\ &\iso&Ext_{A(2)}^{s-n,t}(H^*(X)\otimes\overline{A\modmod A(2)}^{\otimes n},\Ft)\end{eqnarray*}
which converges to \[ Ext_A^{s,t}(H^*(X), \Ft).\]
For any element in the algebraic $tmf$ spectral sequence in tridegree $(s,t,n)$, we will refer to $s$ as its Adams filtration, $t$ as the internal degree and $n$ as the algebraic $tmf$ filtration. 
The differential $d_r$ has tridegree $(1,0,r)$. It is shown in~\cite{DMext} that\[A\modmod A(2)\iso\bigoplus_{i\geq 0}H^*(\Sigma^{8i}bo_i),\]
where $bo_i$ denotes the $i$-th $bo$-Brown-Gitler spectrum of~\cite{GJM}. As a result the $E_1$ page of the algebraic $tmf$ spectral sequence simplifies to
\[ E_1^{s,t,n}=\bigoplus_{i_1,\ldots,i_n\geq 1}Ext^{s-n,t-8(i_1+\cdots+i_n)}_{A(2)}(X \sma bo_{i_1}\sma\ldots\sma bo_{i_n})\To Ext_A^{s,t}(X).\]
We will attempt to exploit the relative sparseness of the $E_1$ page, especially its vanishing line properties, in the case when $X = \A \sma D\A$. 
\begin{rem}[The cellular structure of $bo$-Brown-Gitler spectra] \label{rem:BG}The spectrum $bo_0$ is the sphere spectrum. The cohomology of the spectrum $bo_1$ as a module over the Steenrod algebra can be described through the following picture, with the generators labelled by cohomological degree:
\[ \begin{tikzpicture}[scale = 1]
\node (a0) at (0,0) [label=above:0] {$\bullet$};
\node (a1) at (4,0) [label=above:4] {$\bullet$};
\node (a2) at (6,0) [label=above:6] {$\bullet$};
\node (a3) at (7,0) [label=above:7] {$\bullet$};
\draw[-] (a2) -- (a3);
\draw[blue, bend right] (a1) to (a2);
\draw[red] (a0) -- (0,-1) -- (4, -1) -- (a1);
\end{tikzpicture} \]
where the black, blue and red lines describe the actions of $Sq^1$, $Sq^2$ and $Sq^4$ respectively.
Note that the 4-skeleton of $bo_1$ is $C\nu$. Indeed, the $bo_i$'s fit together to form the following cofiber sequence 
\[ bo_{i-1} \to bo_{i} \to \Sigma^{4i}B(i)\]
where $B(i)$ is the $i$-th integral Brown-Gitler spectrum as described in~\cite{GJM}. Therefore for every $i\geq 1$, the 7-skeleton of $bo_i$ is $bo_1$ and the 4-skeleton of $bo_i$ is $C\nu.$   
\end{rem}
One can compute $Ext_{A(2)}^{s,t}(A_1 \sma DA_1 \sma bo_i)$ from $Ext_{A(2)}^{s,t}(A_1 \sma DA_1)$ using the Atiyah-Hirzebruch spectral sequence or with Bruner's program~\cite{Bru}. 
\begin{lem}\label{lem:vanishbo} The group\[Ext^{s,t}_{A(2)}(A_1 \sma DA_1 \sma bo_{i_1} \sma \ldots \sma bo_{i_n})\]is zero if $s > \frac{1}{5}((t-s)+6)$.    
\end{lem}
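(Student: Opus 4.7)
The plan is to transport the vanishing line of $Ext_{A(2)}^{s,t}(A_1)$ from Lemma~\ref{vanishA2} to the smash product at hand, using the duality relations among the models of $A_1$ together with a cellular (Atiyah--Hirzebruch) spectral sequence argument.

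First I would apply the duality statements from the introduction: in every case $DA_1 \simeq \Sigma^{-6} A_1^\dagger$ for one of the four models $A_1^\dagger$ of $A_1$. Consequently
\[
A_1 \sma DA_1 \sma bo_{i_1} \sma \cdots \sma bo_{i_n} \simeq \Sigma^{-6}(A_1 \sma Z), \qquad Z := A_1^\dagger \sma bo_{i_1} \sma \cdots \sma bo_{i_n}.
\]
It therefore suffices to bound $Ext_{A(2)}^{s,t}(A_1 \sma Z)$: a vanishing line of the form $s > \tfrac{1}{5}(t-s) + c$ for this group translates under the $\Sigma^{-6}$-shift into $s > \tfrac{1}{5}((t-s)+6) + c$ for the original group. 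By Remark~\ref{rem:BG} each $bo_{i_j}$ is connective with bottom cell in degree $0$, and the same holds for $A_1^\dagger$, so $Z$ is connective: its skeletal filtration involves only cells in non-negative degrees.

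Next I would feed that skeletal filtration into the Atiyah--Hirzebruch-type spectral sequence
\[
E_1^{s,t} = \bigoplus_{d \geq 0} H_d(Z;\Ft) \otimes Ext_{A(2)}^{s,t-d}(A_1) \;\Rightarrow\; Ext_{A(2)}^{s,t}(A_1 \sma Z).
\]
By Lemma~\ref{vanishA2}, each summand $Ext_{A(2)}^{s,t-d}(A_1)$ vanishes above $s = \tfrac{1}{5}((t-d)-s) + 1$, and, more sharply, above $s = \tfrac{1}{5}((t-d)-s)$ once $(t-d)-s \geq 29$. Because $d \geq 0$, the $d=0$ summand controls the worst intercept, yielding vanishing of $Ext_{A(2)}^{s,t}(A_1 \sma Z)$ above $s = \tfrac{1}{5}(t-s) + 1$ in general, and above $s = \tfrac{1}{5}(t-s)$ for $t-s \geq 29$. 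Composing with the $\Sigma^{-6}$-shift produces the desired vanishing line $s > \tfrac{1}{5}((t-s)+6)$ on the original group in the range $(t-s) \geq 23$.

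The hard part will be closing the gap in the low-stem region $(t-s) < 23$, where the sharper form of Lemma~\ref{vanishA2} does not apply and the naive estimate loses an additive constant of $1$. This residual region involves only finitely many bidegrees, and the potential nonzero classes there are either visibly absent from Figure~\ref{SSeq} once one tracks the shift $t-d$ contributed by each cell of $Z$, or are killed by the $d_1$-differentials in the above spectral sequence, which come from the nontrivial attaching maps inside $A_1^\dagger$ and the $bo_i$'s. A direct check using Bruner's program~\cite{Bru} handles any remaining bidegrees and completes the proof.
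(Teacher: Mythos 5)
Your overall framework---reduce to the vanishing line of $Ext_{A(2)}^{s,t}(A_1)$ from Lemma~\ref{vanishA2} and propagate it through the smash factors by an Atiyah--Hirzebruch argument---is the same as the paper's, and your use of the duality $DA_1\simeq\Sigma^{-6}A_1^{\dagger}$ is a perfectly good substitute for the paper's direct observation that $DA_1$ has cells in dimensions $-6$ through $0$. But there is a genuine gap at the one step where all the content lies: lowering the intercept from $+1$ to $0$. After your reduction you must show that $Ext_{A(2)}^{s,t}(A_1\sma Z)$ vanishes for $s>\frac{1}{5}(t-s)$ with intercept exactly $0$, whereas the cell-by-cell estimate you run only yields intercept $+1$ (and not just from the $d=0$ cell: every cell of $Z$ in degree $d\leq 4$ contributes summands violating the intercept-$0$ line, since $\frac{1}{5}(x-d)+1>\frac{1}{5}x$ for $d<5$). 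You then assert that the offending low-stem classes are killed by $d_1$-differentials or can be checked by machine. The paper's proof supplies the actual mechanism: the only classes of $Ext_{A(2)}^{s,t}(A_1)$ lying strictly above the line $s=\frac{1}{5}(t-s)$ are the $h_2$- and $h_2^2$-multiples of the tower generators (visible in Figure~\ref{SSeq}), and these die precisely upon smashing with $C\nu$, which by Remark~\ref{rem:BG} is the $4$-skeleton of \emph{every} $bo_i$ with $i\geq 1$. This is why the hypothesis $n\geq 1$ is essential, and why the intercept for $A_1\sma DA_1$ alone is the worse $\frac{11}{5}$ of Corollary~\ref{cor:vanishoverA} rather than the $\frac{6}{5}$ claimed here; an argument that never isolates $h_2$ and the $C\nu$ attaching map cannot see this distinction.

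A secondary but real problem is your claim that the residual region ``involves only finitely many bidegrees'' and can be closed by a direct check with Bruner's program. The bidegrees are finite in number, but the spectrum $Z=A_1^{\dagger}\sma bo_{i_1}\sma\cdots\sma bo_{i_n}$ ranges over an infinite family, and the dimensions of $H_{\leq 4}(Z;\Ft)$ grow with $n$, so no single machine computation covers all cases. The fix is the paper's inductive organization: first establish the intercept-$0$ line for $A_1\sma C\nu$, hence for $A_1\sma bo_{i_1}$ (the remaining cells of $bo_{i_1}$ lie in degrees $\geq 6$ and $\frac{1}{5}(x-6)+1<\frac{1}{5}x$), and then observe that an intercept-$0$ line of slope $\frac{1}{5}$ is preserved under smashing with any further connective spectrum, since shifting by $d\geq 0$ can only move classes below the line. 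Without that induction and without the $h_2$-killing step, the proposal does not close.
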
   
\begin{proof}We showed in Lemma~\ref{vanishA2} that $Ext_{A(2)}^{s,t}(A_1)$ has a vanishing line $s=\frac{1}{5}(t-s)$ for $t-s \geq 30$ and a vanishing line of $s=\frac{1}{5}(t-s) + 1$ overall. The only generator of $Ext_{A(2)}^{s,t}(A_1)$ with a slope greater than $\frac{1}{5}$ is $h_2$, so if we kill off $h_2$ by considering $Ext_{A(2)}^{s,t}(A_1 \sma C\nu)$ then the vanishing line is precisely $s=\frac{1}{5}(t-s)$.

As we mentioned in Remark~\ref{rem:BG}, the 4-skeleton of any $bo_i$ is $C\nu$ and the next cell is in dimension $6$. So we can build $bo_i$ by attaching finitely many cells to $C\nu$ of dimension $\geq 6$. Hence by using the Atiyah-Hirzebruch spectral sequence and the fact that $\frac{1}{5}(x-6) +1 < \frac{1}{5}x$, one can see that the vanishing line of $A_1 \sma bo_i$ is $s= \frac{1}{5}(t-s)$. One can build $A_1 \sma bo_{i_1} \sma \ldots \sma bo_{i_n}$ from $A_1 \sma bo_{i_1}$, iteratively using cofiber sequences, which depend on the cell structure of $bo_{i_2} \sma \ldots \sma bo_{i_n}$. Since we have already established that $Ext_{A(2)}^{s,t}(A_1 \sma bo_{i_1})$ has vanishing line $s= \frac{1}{5}(t-s)$ and that $bo_{i_2} \sma \ldots \sma bo_{i_n}$ is a connected spectrum, we conclude, using the Atiyah-Hirzebruch spectral sequence, that the vanishing line for $Ext_{A(2)}^{s,t}(A_1 \sma bo_{i_1} \sma \ldots \sma bo_{i_n})$ is 
$s= \frac{1}{5}(t-s)$.

However, $DA_1$ has cells in negative dimension, in fact the bottom cell is in dimension $-6$. Again by using the Atiyah-Hirzebruch spectral sequence, one concludes that the vanishing line for $Ext_{A(2)}^{s,t}(A_1 \sma DA_1 \sma bo_{i_1} \sma \ldots \sma bo_{i_n})$ is 
\[ s = \frac{1}{5}(t-s+6)\]  
for any $i_k \geq 1$, completing the proof.
\end{proof}
\begin{cor} \label{cor:vanishoverA} The group $Ext_{A}^{s,t}(\A \sma D\A)$ is zero if 
\[ s> \frac{1}{5}(t-s) + \frac{11}{5}\]
and for $t-s\leq 23$, it is zero if
\[ s> \frac{1}{5}(t-s) + \frac{6}{5}.\]
\end{cor}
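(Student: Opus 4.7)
The plan is to apply the algebraic $tmf$ spectral sequence
\[ E_1^{s,t,n}=\bigoplus_{i_1,\ldots,i_n\geq 1}Ext^{s-n,t-8k}_{A(2)}(\A\sma D\A \sma bo_{i_1}\sma\ldots\sma bo_{i_n})\Longrightarrow Ext_A^{s,t}(\A\sma D\A)\]
with $k=i_1+\cdots+i_n$, and to show that every term on the $E_1$ page vanishes in the claimed ranges. Since a sum of zero groups converges to zero, this will conclude the argument.

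\emph{The contributions with $n\geq 1$ are essentially free.} For such a summand, Lemma~\ref{lem:vanishbo} gives vanishing whenever $(s-n)>\tfrac{1}{5}((t-8k)-(s-n)+6)$, which rearranges to
\[ s>\tfrac{1}{5}(t-s)+\tfrac{6+6n-8k}{5}. \]
Because each $i_j\geq 1$ forces $k\geq n$, the right-hand side is at most $\tfrac{1}{5}(t-s)+\tfrac{6-2n}{5}$. For any $n\geq 1$ this is bounded above by $\tfrac{1}{5}(t-s)+\tfrac{4}{5}$, which is strictly below both $\tfrac{1}{5}(t-s)+\tfrac{11}{5}$ and $\tfrac{1}{5}(t-s)+\tfrac{6}{5}$. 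So these terms never obstruct either bound.

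\emph{Only the $n=0$ term requires work.} Here $E_1^{s,t,0}=Ext_{A(2)}^{s,t}(\A\sma D\A)$. I would build $D\A$ out of its cells, which by Spanier--Whitehead duality sit in dimensions $-6,-5,-4,-3,-3,-2,-1,0$, and run the Atiyah-Hirzebruch spectral sequence exactly as in the proof of Lemma~\ref{lem:vanishbo}. The cell of dimension $d$ contributes $Ext_{A(2)}^{s,t-d}(\A)$, which by Lemma~\ref{vanishA2} vanishes for $s>\tfrac{1}{5}(t-s)-\tfrac{d}{5}+1$. Taking the worst case $d=-6$ yields $s>\tfrac{1}{5}(t-s)+\tfrac{11}{5}$, proving the first part of the corollary.

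\emph{The second, stronger bound for $t-s\leq 23$} is the main point requiring care. In this stem range I would go back to the cellular decomposition of $D\A$ and use the stronger vanishing line $s>\tfrac{1}{5}(t-s)$ of $Ext_{A(2)}^{s,t}(\A)$ valid for stems $t-s\geq 29$ (Lemma~\ref{vanishA2}), together with direct inspection of the chart in Figure~\ref{SSeq}. Concretely, for each cell dimension $d\in\{0,-1,\ldots,-6\}$ the contribution lives at stem $t-s-d$, and one checks that, under the hypothesis $s>\tfrac{1}{5}(t-s)+\tfrac{6}{5}$ together with $t-s\leq 23$, every cell-by-cell contribution $Ext_{A(2)}^{s,t-d}(\A)$ vanishes. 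The obstacle is really the mid-range cell $d=-5$, whose contribution requires looking at $Ext_{A(2)}^{s,*}(\A)$ at stems near $25$: the chart shows nothing above the line $s=\tfrac{1}{5}(t-s)+\tfrac{6}{5}$ in that range. This finishes the proof.
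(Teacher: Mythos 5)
Your overall strategy --- run the algebraic $tmf$ spectral sequence, dispose of the $n\geq 1$ summands with Lemma~\ref{lem:vanishbo}, and handle the $n=0$ term by filtering $D\A$ by its cells and quoting Lemma~\ref{vanishA2} --- is exactly what the paper's one-sentence proof intends, and your treatment of the $n\geq 1$ terms and of the first bound $s>\frac{1}{5}(t-s)+\frac{11}{5}$ is correct and complete. The gap is in the second bound. Your argument there rests on the assertion that, for $t-s\leq 23$ and $s>\frac{1}{5}(t-s)+\frac{6}{5}$, \emph{every} cell-by-cell contribution $Ext_{A(2)}^{s,t-d}(\A)$ vanishes. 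That is false. The classes of $Ext_{A(2)}^{*,*}(\A)$ lying strictly between the two vanishing lines of Lemma~\ref{vanishA2} --- the $h_2$-multiples such as $h_2$ at $(t-s,s)=(3,1)$, $h_2^2$ at $(6,2)$, $h_2h_{2,1}$ at $(8,2)$, $h_2h_{3,0}$ at $(9,2)$, $h_2^2h_{3,0}$ at $(12,3)$, $h_2^2b_{3,0}$ at $(18,4)$ in Figure~\ref{SSeq} --- get pushed into your forbidden region by the negative-dimensional cells of $D\A$. Concretely, $Ext_{A(2)}^{1,4}(\A)=\Ft\{h_2\}$ is certainly nonzero (it records the degree-$4$ relation in the minimal $A(2)$-resolution of $A(1)$), and on the $(-5)$-cell of $D\A$ it contributes to the cell-filtration $E_1$-page in stem $-2$, filtration $1$, where $1>\frac{-2}{5}+\frac{6}{5}$; similarly $h_2^2$ on the $(-3)$-cells lands at stem $3$, filtration $2$, with $2>\frac{3}{5}+\frac{6}{5}$, and $h_2^2b_{3,0}$ on the $(-6)$-cell lands at stem $12$, filtration $4$, with $4>\frac{12}{5}+\frac{6}{5}$. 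So the $E_1$-page of your cell filtration is not zero where you need it to be, and your diagnosis of the $d=-5$ cell near stem $25$ as ``the obstacle'' points at the wrong place: the failures come from the low-filtration $h_2$-power classes at small stems.

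To salvage the clause as stated one would have to show that all such classes are cancelled by Atiyah--Hirzebruch differentials coming from the $Sq^2$- and $Sq^4$-attaching maps of $D\A$ (for instance, the class $h_2$ on the $(-5)$-cell above is in fact killed by a differential induced by the $Sq^4$ on the bottom class of $H^*(D\A)$, but verifying this is an honest extension/connecting-map computation, not a chart inspection), or else simply quote a machine computation of $Ext_{A(2)}^{s,t}(\A\sma D\A)$ in stems $\leq 23$. It is worth observing that the paper never uses this clause in the range $t-s\leq 23$: the one invocation of the corollary, in the proof of Lemma~\ref{lem:diffinA}, is at $t-s=94$ and needs the $\frac{6}{5}$-intercept at a \emph{large} stem. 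For $t-s\geq 29$ every cell of $D\A$ contributes at a stem $\geq 29$, so the sharper line $s>\frac{1}{5}(t-s)$ of Lemma~\ref{vanishA2} applies to each cell and your cell-by-cell argument goes through verbatim, giving vanishing for $s>\frac{1}{5}(t-s)+\frac{6}{5}$ there. Proving that (presumably intended) version of the statement is the clean fix; the version with ``$t-s\leq 23$'' does not follow from the two lemmas by the argument you give.
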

The result is a straightforward consequence of Lemma~\ref{vanishA2}, Lemma~\ref{lem:vanishbo} and the algebraic $tmf$ spectral sequence. 
\begin{lem}\label{v2^8lifts}The element\[b_{3,0}^4\in Ext_{A(2)}^{8,48+8}(\A\sma D\A)\]is in the image of the map\[Ext_A^{8,48+8}(\A\sma D\A)\to Ext_{A(2)}^{8,48+8}(\A\sma D\A).\]\end{lem}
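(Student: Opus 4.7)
The plan is to apply the algebraic $tmf$ spectral sequence introduced just above the lemma to $X = \A \sma D\A$ and show that $b_{3,0}^4$, viewed as a class in tridegree $(s,t,n) = (8, 56, 0)$ of the $E_1$-page, is a permanent cycle. Since the spectral sequence converges to $Ext_A^{*,*}(\A \sma D\A)$, and the edge at $n=0$ recovers the map induced by $H_{tmf}$ on $E_2$-pages, any class surviving to $E_\infty^{8,56,0}$ produces the desired lift.

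First I would enumerate the potential targets of a $d_r$-differential out of $b_{3,0}^4$. Because $d_r$ has tridegree $(1,0,r)$, any such target lives in a summand of
\[\bigoplus_{i_1, \ldots, i_r \geq 1} Ext_{A(2)}^{9-r,\, 56 - 8I}\bigl(\A \sma D\A \sma bo_{i_1} \sma \cdots \sma bo_{i_r}\bigr),\]
where $I := i_1 + \cdots + i_r \geq r$. Lemma~\ref{lem:vanishbo} forces such a summand to vanish whenever $I > 1 + \tfrac{3r}{4}$. Combined with the bound $I \geq r$, this inequality is automatic for every $r \geq 5$, and it also handles every case with $r \leq 4$ but $I > r$. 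Consequently the only summands that could carry a nontrivial $d_r(b_{3,0}^4)$ are those with $(i_1,\ldots,i_r) = (1,\ldots,1)$ for $r \in \{1,2,3,4\}$.

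The remaining task, and the main obstacle, is to check that the four groups
\[Ext_{A(2)}^{9-r,\, 56-8r}\bigl(\A \sma D\A \sma bo_1^{\sma r}\bigr), \qquad r \in \{1,2,3,4\},\]
sitting in stems $40$, $33$, $26$, $19$ respectively (each essentially on the boundary of the line in Lemma~\ref{lem:vanishbo}), are all zero. Each is computable because the $A(2)$-module structure of $\A \sma D\A$ is known and the cohomology of $bo_1$ is described in Remark~\ref{rem:BG}, so I would feed these modules into Bruner's Ext program. As a possible shortcut, I would first attempt to sharpen the vanishing slope for $A_1 \sma DA_1 \sma bo_1^{\sma r}$ by iterating the Atiyah-Hirzebruch argument used in the proof of Lemma~\ref{lem:vanishbo}, beginning from the $C\nu$-skeleton of each $bo_1$ factor; this may eliminate some of the four cases without any further computation. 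Once these Ext groups are shown to vanish, $b_{3,0}^4$ is a permanent cycle in the algebraic $tmf$ spectral sequence and hence lifts to a class in $Ext_A^{8,56}(\A \sma D\A)$.
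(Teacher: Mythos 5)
Your proposal is correct and follows essentially the same route as the paper: use the vanishing line of Lemma~\ref{lem:vanishbo} to reduce the possible targets of $d_r(b_{3,0}^4)$ in the algebraic $tmf$ spectral sequence to the four groups $Ext_{A(2)}^{9-r,56-8r}(\A\sma D\A\sma bo_1^{\sma r})$ for $r\in\{1,2,3,4\}$, and then verify that these vanish with Bruner's program. Your single inequality $I>1+\tfrac{3}{4}r$ is exactly the paper's inequality~\eqref{algtmf:ineq} rearranged, and the four residual Ext groups you list (in stems $40$, $33$, $26$, $19$) coincide with those checked in the paper.
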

\begin{proof}Clearly $b_{3,0}^4$ is in bidegree $(s,t)=(8,48+8)=(8,56)$ of the $E_1$ page of the algebraic $tmf$ spectral sequence, so we must verify that it is a permanent cycle, which we will do by showing that the $E_1$ page is zero in bidegree $(s,t)=(9,56)$ when $n\geq 1$. Namely, we must show that for every $n\geq 1$, the group
\[\bigoplus_{i_1,\ldots,i_n\geq 1}Ext^{9-n,56-8(i_1+\cdots+i_n)}_{A(2)}(\A\sma D\A\sma bo_{i_1}\sma\ldots\sma bo_{i_n})\]is zero. Using the vanishing line in Lemma~\ref{lem:vanishbo}, the group is zero for all $i_1,\ldots,i_n\geq 1$ such that\[\frac{1}{5}(56-8(i_1+\cdots+i_n)-9+n+6)<9-n\]or
\begin{equation}\label{algtmf:ineq}\frac{1}{5}(53+n-8(i_1+\cdots+i_n))<9-n.\end{equation}
Of course, we have\[\frac{1}{5}(53+n-8(i_1+\cdots+i_n))\leq\frac{1}{5}(53-7n),\]and if $n>4$, we also have\[\frac{1}{5}(53-7n)<9-n.\]

Assume $n=1$, then~\eqref{algtmf:ineq} becomes\[\frac{1}{5}(54-8i_1)<8,\]or\[i_1>1,\]so it suffices to verify that\[Ext_{A(2)}^{8,48}(\A\sma D\A\sma bo_1)=0.\]

Assume $n=2$, then~\eqref{algtmf:ineq} becomes\[\frac{1}{5}(55-8(i_1+i_2))<7,\]or\[i_1+i_2>2,\]so it suffices to verify that\[Ext_{A(2)}^{7,40}(\A\sma D\A\sma bo_1\sma bo_1)=0.\]

Assume $n=3$, then~\eqref{algtmf:ineq} becomes\[\frac{1}{5}(56-8(i_1+i_2+i_3))<6,\]or\[i_1+i_2+i_3>3,\]so it suffices to verify that\[Ext_{A(2)}^{6,32}(\A\sma D\A\sma bo_1\sma bo_1\sma bo_1)=0.\]

Assume $n=4$, then~\eqref{algtmf:ineq} becomes\[\frac{1}{5}(57-8(i_1+i_2+i_3+i_4))<5,\]or\[i_1+i_2+i_3+i_4>4,\]so it suffices to verify that\[Ext_{A(2)}^{5,24}(\A\sma D\A\sma bo_1\sma bo_1\sma bo_1\sma bo_1)=0.\]
For all four models of $\A$, Bruner's program~\cite{Bru} shows that all the groups we expected to be zero are in fact zero.
\end{proof}
\begin{cor}\label{cor:liftA}For all $n\in\mathbb{N}$, the elements $b_{3,0}^{4n} \in Ext_{A(2)}^{8n,48n+8n}(\A \sma D\A)$ lift to $Ext_{A}^{8n,48n+8n}(\A \sma D\A)$ under the map induced by $H_{tmf}$.
\end{cor}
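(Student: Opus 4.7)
The plan is to bootstrap from Lemma~\ref{v2^8lifts}, which establishes the case $n=1$, by exploiting the multiplicative structure on the Ext groups that arises because $\A \sma D\A \simeq F(\A,\A)$ is a ring spectrum under composition of endomorphisms, with unit given by the coevaluation $\unit \colon S^0 \to \A \sma D\A$.

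First I would observe that this ring spectrum structure on $\A \sma D\A$ endows both $Ext_A^{*,*}(\A \sma D\A)$ and $Ext_{A(2)}^{*,*}(\A \sma D\A)$ with the structure of bigraded rings in the standard way, via the coproduct induced on $H^*(\A \sma D\A)$. With respect to these ring structures, the maps
\[ \unit_*^{tmf} \colon Ext_{A(2)}^{*,*}(S^0) \to Ext_{A(2)}^{*,*}(\A \sma D\A) \]
and $\unit_* \colon Ext_A^{*,*}(S^0) \to Ext_A^{*,*}(\A \sma D\A)$ are ring homomorphisms, and the change-of-rings map
\[ Ext_A^{*,*}(\A \sma D\A) \to Ext_{A(2)}^{*,*}(\A \sma D\A) \]
induced by $H_{tmf}$ is likewise a ring homomorphism, being induced on Adams $E_2$-pages by the ring map $\unit_{tmf}\colon S^0 \to tmf$.

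Next I would invoke Lemma~\ref{v2^8lifts} to pick a lift $\tilde b \in Ext_A^{8, 56}(\A \sma D\A)$ of $b_{3,0}^4$. By multiplicativity of $\unit_*^{tmf}$, the image of $b_{3,0}^{4n} = (b_{3,0}^4)^n \in Ext_{A(2)}^{*,*}(S^0)$ in $Ext_{A(2)}^{*,*}(\A \sma D\A)$ agrees with $(\unit_*^{tmf}(b_{3,0}^4))^n$, and hence the class $\tilde b^n \in Ext_A^{8n, 56n}(\A \sma D\A)$ furnishes the required lift of $b_{3,0}^{4n}$.

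The only delicate point, and the one I would take care to spell out, is verifying that the change-of-rings map respects the ring structure inherited from $\A \sma D\A$; this is standard once one identifies Ext with the cohomology of a cobar complex, but is essential for the argument. The alternative strategy, which would be considerably more work, would be to rerun the algebraic $tmf$ vanishing-line argument of Lemma~\ref{v2^8lifts} at bidegree $(8n, 56n)$ for each $n$, where the analogue of the inequality in~\eqref{algtmf:ineq} depends on $n$ and no longer collapses to a finite list of Ext-group vanishings checkable by Bruner's program.
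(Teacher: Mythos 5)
Your proposal is correct and is essentially identical to the paper's own proof: both take the lift of $b_{3,0}^4$ supplied by Lemma~\ref{v2^8lifts} and raise it to the $n$-th power, using that the ring spectrum structure on $\A\sma D\A$ makes the map $Ext_A^{*,*}(\A\sma D\A)\to Ext_{A(2)}^{*,*}(\A\sma D\A)$ induced by $H_{tmf}$ a map of algebras. Your additional care about why the change-of-rings map is multiplicative is a reasonable elaboration of what the paper states in one line.
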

\begin{proof}
Since  $\A\sma D\A$ is a ring spectrum, it follows that the map 
\[ Ext_A^{s,t}(\A \sma D\A) \to Ext_{A(2)}^{s,t}(\A \sma D\A)\]
induced by $H_{tmf}$ is a map of algebras. By Lemma~\ref{v2^8lifts}, $b_{3,0}^4$ lifts and thus $b_{3,0}^{4n}$ lifts for every $n \in \mathbb{N}$. 
\end{proof}
\begin{rem} \label{rem:nonunique2}The lift of $b_{3,0}^{4n} \in Ext_{A(2)}^{8n,48n+8n}(\A \sma D\A)$ to $Ext_{A}^{8n,48n+8n}(\A \sma D\A)$ may not be unique. The conclusions of Lemma~\ref{lem:diffinA} will not depend on the choice of lift.
\end{rem} 
\begin{lem} \label{lem:diffinA} In the Adams spectral sequence 
\[ E_2^{s,t} = Ext_{A}^{s,t}(\A \sma D\A) \To \pi_{t-s}(\A \sma D\A)\] 
there is a $d_2$-differential  
\[ d_2(b_{3,0}^4) = \widetilde{e_0r} = e_0r + R\]
and a $d_3$-differential 
\[ d_3(b_{3,0}^8) = \widetilde{wgr} = wgr +S\]
for some $R$ and $S$ in algebraic $tmf$ filtration greater than zero. 
\end{lem}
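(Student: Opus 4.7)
The plan is to invoke naturality of the Adams spectral sequence with respect to the Hurewicz map $H_{tmf}\colon \pi_*(\A\sma D\A)\to tmf_*(\A\sma D\A)$. Because $H^*(tmf)\iso A\modmod A(2)$, the induced map of Adams spectral sequences is, on $E_2$ pages, exactly the edge homomorphism of the algebraic $tmf$ spectral sequence
\[\pi\colon Ext_A^{s,t}(\A\sma D\A)\longrightarrow Ext_{A(2)}^{s,t}(\A\sma D\A),\]
whose kernel is precisely the subgroup of classes in positive algebraic $tmf$ filtration. The whole lemma is a combination of this naturality square with Lemma~\ref{not8or16} and the lifting result of Corollary~\ref{cor:liftA}.

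For the $d_2$ assertion, start from the lift $\widetilde{b_{3,0}^4}\in Ext_A^{8,56}(\A\sma D\A)$ given by Corollary~\ref{cor:liftA}. Naturality together with Lemma~\ref{not8or16} gives
\[\pi\bigl(d_2^{A}(\widetilde{b_{3,0}^4})\bigr)\;=\;d_2^{A(2)}(b_{3,0}^4)\;=\;e_0r,\]
so $d_2^{A}(\widetilde{b_{3,0}^4})$ is nonzero and lifts $e_0r$. Thus, once we know $e_0r$ itself lifts to some class $\widetilde{e_0r}\in Ext_A$, the difference $R:=d_2^{A}(\widetilde{b_{3,0}^4})-\widetilde{e_0r}$ has positive algebraic $tmf$ filtration. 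The lift of $e_0r$ is produced in the style of Lemma~\ref{v2^8lifts}: the vanishing line of Lemma~\ref{lem:vanishbo} kills all but finitely many summands $E_1^{s+1,t,n}$ of the algebraic $tmf$ spectral sequence in the bidegree of $e_0r$, reducing the problem to a finite list of Ext groups over $A(2)$ that can be verified zero with Bruner's program.

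For the $d_3$ assertion, we must first ensure $\widetilde{b_{3,0}^8}$ survives to $E_3^{A}$. Using that $\A\sma D\A$ is a ring spectrum, we choose the lift $\widetilde{b_{3,0}^8}:=(\widetilde{b_{3,0}^4})^2$. The Leibniz rule then gives
\[d_2^{A}\bigl((\widetilde{b_{3,0}^4})^2\bigr)\;=\;2\,\widetilde{b_{3,0}^4}\cdot d_2^{A}(\widetilde{b_{3,0}^4})\;=\;0\]
at the prime $2$. A second application of naturality yields $\pi(d_3^{A}(\widetilde{b_{3,0}^8}))=wgr$, and the analogous algebraic $tmf$ vanishing-line argument applied in the bidegree of $wgr$ produces a lift $\widetilde{wgr}\in Ext_A$; consequently $d_3^{A}(\widetilde{b_{3,0}^8})=\widetilde{wgr}+S$ for some $S$ of positive algebraic $tmf$ filtration.

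The main obstacle is computational rather than conceptual: one has to carry out the algebraic $tmf$ vanishing checks in the bidegrees of $e_0r$ and $wgr$, each time using Lemma~\ref{lem:vanishbo} to cut the infinite sum in the $E_1$ page down to a finite list of $Ext_{A(2)}(\A\sma D\A\sma bo_{i_1}\sma\cdots\sma bo_{i_n})$ groups that Bruner's program can confirm are zero. These calculations are strictly parallel to the one performed in Lemma~\ref{v2^8lifts}; everything else in the argument is pure naturality applied to the $tmf$-level differentials already established in Lemma~\ref{not8or16}.
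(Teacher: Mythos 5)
Your proof is correct and follows the same skeleton as the paper's: naturality of the Adams spectral sequence under $H_{tmf}$, the $tmf$-level differentials of Lemma~\ref{not8or16}, and the lift of $b_{3,0}^4$ from Corollary~\ref{cor:liftA}. The one place you genuinely diverge is in how the target classes are lifted to $Ext_A^{*,*}(\A\sma D\A)$. The paper does not run the algebraic $tmf$ spectral sequence in the bidegrees of $e_0r$ and $wgr$ at all; it observes that these are classical elements of $Ext_A^{*,*}(S^0)$ (Tangora~\cite{Tan}), pushes them forward along the unit map, and reads off nonvanishing from the commutative squares~\eqref{commute1} and~\eqref{commute2}, which immediately gives preferred lifts. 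Your route via the algebraic $tmf$ spectral sequence also works --- in fact the vanishing line of Lemma~\ref{lem:vanishbo} alone kills every summand with $n\geq 1$ in the relevant bidegrees (one checks $\sum i_j > (6n-3)/8$ and $\sum i_j>3n/4$ respectively, both automatic since $\sum i_j\geq n$), so no Bruner computation is actually needed; and as you half-notice, the lift is even redundant, since $d_2(b_{3,0}^4)$ is itself a lift of $e_0r$ once naturality gives $\pi(d_2(b_{3,0}^4))=e_0r$. Two small points you elide that the paper spells out: that $b_{3,0}^8$ is not \emph{hit} by a $d_2$ in the $A$-based spectral sequence (otherwise its image would die in the $E_3$ page over $A(2)$, contradicting Lemma~\ref{not8or16}), and that the lift of $wgr$ survives to the $E_3$ page --- it supports no $d_2$ because $Ext_A^{21,94+21}(\A\sma D\A)=0$ by Corollary~\ref{cor:vanishoverA}, and is not hit by one for the same naturality reason. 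Both follow in one line from the square you already set up, so these are omissions of bookkeeping rather than gaps.
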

\begin{proof}Recall that the element $e_0r\in Ext_{A}^{10,47+10}(S^0)$ (see \cite{Tan}) maps to a nonzero element in $Ext_{A(2)}^{10,47+10}(S^0)$ which is also called $e_0r$ in the literature, and that $d_2(b_{3,0}^4)=e_0r$ in\[Ext_{A(2)}^{s,t}(S^0)\To tmf_{t-s}.\]In Lemma~\ref{not8or16}, we argued that $e_0r$ has a nonzero image under the map 
\[ Ext_{A(2)}^{10,47+10}(S^0) \to Ext_{A(2)}^{10,47+10}(\A \sma D\A). \]
Therefore by inspecting the commutative diagram
\begin{equation}\label{commute1}\xymatrix{ 
Ext_{A}^{10,47+10}(S^0)\ar[d] \ar[r] & Ext_A^{10,47+10}(\A \sma D\A) \ar[d]\\
Ext_{A(2)}^{10,47+10}(S^0) \ar[r]&  Ext_{A(2)}^{10,47+10}(\A \sma D\A),
} \end{equation}
we see that $e_0r \in Ext_{A}^{10,47+10}(S^0)$ has a nonzero image in $Ext_A^{10,47+10}(\A \sma D\A)$. Since $d_2(b_{3,0}^4) = e_0r$ in $Ext_{A(2)}(\A \sma D\A)$, it follows that\[d_2(b_{3,0}^4) = e_0r + R\]in $Ext_{A}(\A \sma D\A)$ for some $R$ in algebraic $tmf$ filtration greater than zero. 

Consequently, $d_2(b_{3,0}^8)=0$ in\[Ext_A^{s,t}(\A\sma D\A)\To\pi_{t-s}(\A\sma D\A),\]and clearly $b_{3,0}^8$ is not hit by a $d_2$ in this spectral sequence, otherwise it would be hit by a differential in\[Ext_{A(2)}^{s,t}(\A\sma D\A)\To tmf_{t-s}(\A\sma D\A).\]However, $b_{3,0}^8$ could support a nonzero $d_3$. The element $wgr\in Ext_{A}^{19,95+19}(S^0)$ maps to a nonzero element of $Ext_{A(2)}^{19,95+19}(S^0)$ we will also call $wgr$. We showed, in Lemma~\ref{not8or16}, that the image of $wgr$ is nonzero in $Ext_{A(2)}^{19,95+19}(\A \sma D\A)$. The diagram 
\begin{equation}\label{commute2}\xymatrix{ 
Ext_{A}^{19,95+19}(S^0)\ar[d] \ar[r] & Ext_A^{19,95+19}(\A \sma D\A) \ar[d]\\
Ext_{A(2)}^{19,95+19}(S^0) \ar[r]&  Ext_{A(2)}^{19,95+19}(\A \sma D\A),
} \end{equation}
makes it clear that the image of $wgr$ is nonzero in $Ext_{A}^{19,95+19}(\A \sma D\A)$.

Note that $wgr \in Ext_{A}^{19,95+19}(\A \sma D\A)$ cannot support a $d_2$-differential as $d_2(wgr)$ would have bidegree $(21, 94+21)$ and 
\[ Ext_{A}^{21, 94+21}(\A \sma D\A) = 0 \]
by Corollary~\ref{cor:vanishoverA}. Moreover, $wgr$ cannot be target of a $d_2$-differential as this will force a $d_2$-differential in $Ext_{A(2)}(\A \sma D\A)$, which is not possible, as we argued in the proof of Lemma~\ref{not8or16}. Thus, $wgr$ is in the $E_3$-page. 

From Lemma~\ref{not8or16}, we know that $d_3(b_{3,0}^8)=wgr$ in the Adams spectral sequence for $tmf_*(\A \sma D\A)$. It follows that
\[ d_3(b_{3,0}^8) = wgr + S,\]
for some $S$ in algebraic $tmf$ filtration greater than zero, in the Adams spectral sequence for  $\pi_{*}(\A \sma D\A)$.
\end{proof}

\section{Another $d_3$ differential} \label{three}
In the Adams spectral sequence\[Ext_{A(2)}^{s,t}(S^0)\To tmf_{t-s},\]there is a well-known $d_3$ differential 
\[ d_3(v_2^{20}h_1) = g^6.\]
The element $g$ is Tangora's name \cite{Tan} for the element detected by $b_{2,1}^2$ in the $E_{\infty}$ page of the May spectral sequence\[\Ft[h_{i,j}:i>0,j\geq 0]\To Ext_A^{s,t}(S^0).\]In the literature, the same name is adopted for its image in $Ext_{A(2)}^{24,120+24}(S^0)$. The goal of this section is to show that this differential induces a $d_3$ differential in\[Ext_{A(2)}^{s,t}(\A\sma D\A)\To tmf_{t-s}(\A \sma D\A)\]and it lifts to a $d_3$ differential under the map of spectral sequences 
\[\xymatrix{
Ext_{A}^{s,t}(\A \sma D\A) \ar[d]  \ar@{=>}[r] & \pi_{t-s}(\A \sma D\A) \ar[d]  \\
Ext_{A(2)}^{s,t}(\A \sma D\A) \ar@{=>}[r] & tmf_{t-s}(\A \sma D\A).
}\]
\begin{lem}\label{d3overA2}In the Adams spectral sequence
\[Ext_{A(2)}^{s,t}(\A\sma D\A)\To tmf_{t-s}(\A\sma D\A),\]
the element $g^6$ is killed by a $d_3$ differential
\[d_3(v_2^{20}h_1)=g^6.\]
\end{lem}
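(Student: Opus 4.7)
The plan is to mirror the strategy of Lemma~\ref{not8or16}: transfer the known differential $d_3(v_2^{20}h_1)=g^6$ from the Adams spectral sequence $Ext_{A(2)}^{s,t}(S^0)\Rightarrow tmf_{t-s}$ to the one converging to $tmf_{t-s}(\A\sma D\A)$ by naturality of the unit map $\unit:S^0\to\A\sma D\A$. Concretely, in the map of spectral sequences induced by $\unit^{tmf}_*$, one has $d_2(\unit^{tmf}_*(v_2^{20}h_1))=\unit^{tmf}_*(d_2(v_2^{20}h_1))=0$ and likewise $d_2(\unit^{tmf}_*(g^6))=0$, so both elements are automatically on the $E_3$-page of the $\A\sma D\A$ spectral sequence. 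Thus the equation $d_3(v_2^{20}h_1)=g^6$ will follow in the target, provided (i) $\unit^{tmf}_*(v_2^{20}h_1)$ and $\unit^{tmf}_*(g^6)$ are nonzero, and (ii) $\unit^{tmf}_*(g^6)$ is not the target of a $d_2$.

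First I would feed the $A(2)$-module structures on $H^{*}(\A\sma D\A)$ to Bruner's program (as in the results collected in the appendices) and read off that $v_2^{20}h_1$ and $g^6$ map to nonzero classes in $Ext_{A(2)}^{21,142}(\A\sma D\A)$ and $Ext_{A(2)}^{24,144}(\A\sma D\A)$ respectively. Once verified, point (i) above is settled.

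The principal obstacle is point (ii): ruling out a $d_2$ into $\unit^{tmf}_*(g^6)$. I would follow the template of Lemma~\ref{not8or16}: enumerate, via Bruner's program, the generators of $Ext_{A(2)}^{22,143}(\A\sma D\A)$, push each such generator $x$ forward under $j_*:Ext_{A(2)}^{*,*}(\A\sma D\A)\to Ext_{A(2)}^{*,*}(\A)$, and show that $j_*(\unit^{tmf}_*(g^6))$ cannot appear as $d_2(j_*(x))$. For each generator I would try to write $x=\alpha\cdot y$ for some $\alpha\in Ext_{A(2)}^{*,*}(S^0)$ whose $d_2$ is already known (typically $\alpha=b_{3,0}^4$ with $d_2(\alpha)=e_0r$) and some lower-bidegree $y\in Ext_{A(2)}^{*,*}(\A)$ whose $d_2$ vanishes on vanishing-line grounds via Lemma~\ref{vanishA2}. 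The Leibniz rule then identifies $d_2(x)$, and a final Bruner-program check confirms that the resulting product does not equal $j_*(g^6)$. Potential $d_2$-sources not expressible through this multiplicative structure will need an independent check, again via the product decomposition in $Ext_{A(2)}$ and Bruner's \texttt{dolifts}.

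Having disposed of the possible $d_2$ on $g^6$, naturality of the $d_3$ differential in the diagram
\[
\xymatrix{
Ext_{A(2)}^{s,t}(S^0)\ar[r]^-{\unit^{tmf}_*}\ar@{=>}[d] & Ext_{A(2)}^{s,t}(\A\sma D\A)\ar@{=>}[d]\\
tmf_{t-s}\ar[r] & tmf_{t-s}(\A\sma D\A)
}
\]
immediately yields $d_3(v_2^{20}h_1)=g^6$ in the lower spectral sequence, completing the proof. The bulk of the work is the finite, but tedious, bidegree-$(22,143)$ analysis in Step~(ii), and I expect that, as in Lemma~\ref{not8or16}, the argument will ultimately reduce to a small number of Leibniz-rule identifications backed by the output of Bruner's program.
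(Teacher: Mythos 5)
Your proposal follows essentially the same route as the paper's proof: transfer $d_3(v_2^{20}h_1)=g^6$ by naturality of the unit map, verify (via the computation of $Ext_{A(2)}^{*,*}(\A)$ and Bruner's program) that $g^6$ has nonzero image, and rule out a $d_2$ hitting $g^6$ by pushing potential sources in bidegree $(22,143)$ forward under $j_*$ to $Ext_{A(2)}^{*,*}(\A)$, factoring them through the $Ext_{A(2)}^{*,*}(S^0)$-module structure, and applying the Leibniz rule (the paper finds the single candidate is $gb_{3,0}^4\cdot\overline{x}$, so its $d_2$ is $ge_0r\cdot\overline{x}=0$ since $ge_0r=0$). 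The approach is correct and matches the paper's argument in both strategy and the key computational checks.
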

\begin{proof} 
From the calculation in Lemma~\ref{MSS:ExtA2A1}, it is clear that $g^6 = b_{2,1}^{12}$ has a nonzero image in $Ext_{A(2)}^{24,120+24}(\A)$.
Since we have a factorization of maps 
\[ Ext_{A(2)}^{24,120+24}(S^0) \to Ext_{A(2)}^{24,120+24}(\A \sma D\A) \to Ext_{A(2)}^{24,120+24}(\A),\]
$g^6$ must also be nonzero in $Ext_{A(2)}^{24,120+24}(\A\sma D\A)$. Furthermore, because it is hit by a $d_3$ differential in\[Ext_{A(2)}^{s,t}(S^0)\To tmf_{t-s},\]it must also be hit by a $d_3$ differential in\[Ext_{A(2)}^{s,t}(\A\sma D\A)\To tmf_{t-s}(\A\sma D\A).\]

However, this does not preclude the possibility that it might be hit by a $d_2$ differential in this spectral sequence. Indeed, there are elements $\widetilde{x}\in Ext_{A(2)}^{22,121+22}(\A\sma D\A)$ that could support a $d_2$ differential
\[d_2(x)=g^6.\]
In such a case, $x$ would have to map to a nonzero element $x\in Ext_{A(2)}^{22,121+22}(\A)$ and there would exist a differential\[d_2(x)=g^6\]in\[Ext_{A(2)}^{s,t}(\A)\To tmf_{t-s}(\A).\]From the calculations of Lemma~\ref{MSS:ExtA2A1}, there is exactly one possible nonzero $x\in Ext_{A(2)}^{22,121+22}(\A)$. Using Bruner's program~\cite{Bru} (see Equation~\eqref{eqn:lem4.1}) we see that this $x$ is a multiple of $gb_{3,0}^4$ under the pairing 
\begin{eqnarray*}Ext_{A(2)}^{12,68+ 12}(S^{0}) \otimes Ext_{A(2)}^{10,53+10}(\A) &\longrightarrow& Ext_{A(2)}^{22, 121 +22}(\A)\\
   gb_{3,0}^4\otimes\overline{x}&\mapsto&x.
  \end{eqnarray*}
Clearly $d_2(\overline{x}) = 0$ as $Ext_{A(2)}^{9,55+9}(\A) = 0.$ We apply the Leibniz rule to see that 
\[ d_2(x) = ge_0r \cdot \overline{x}.\]
However, $ge_0r= 0$ in $Ext_{A(2)}^{14,67+14}(S^0)$, therefore $d_2(x) = 0$. Consequently, $g^6$ is present and nonzero in the $E_3$ page of the spectral sequence 
\[ Ext_{A(2)}^{s,t}(\A \sma D\A) \To tmf_{t-s}(\A \sma D\A).\] 
Since we have a map of spectral sequences 
\[\xymatrix{
Ext_{A(2)}^{s,t}(S^0) \ar[d]  \ar@{=>}[r] & tmf_{t-s} \ar[d]  \\
Ext_{A(2)}^{s,t}(\A \sma D\A) \ar@{=>}[r] & tmf_{t-s}(\A \sma D\A).
}\]
the result follows. 
\end{proof}
Our next goal is to lift this $d_3$ differential to the Adams spectral sequence\[Ext_A^{s,t}(\A\sma D\A)\To\pi_{t-s}(\A\sma D\A).\]The main tool at our disposal is the algebraic $tmf$ spectral sequence, described in Section~\ref{two}. 
\begin{notn}\label{algtmftables}The elements of $E_{1}^{s,t,n}$, the $E_1$ page of the algebraic $tmf$ spectral sequence for $\A \sma D\A$, which are nonzero permanent cycles, will detect nonzero elements of $Ext_{A}^{s,t}(\A \sma D\A)$. Therefore we place an element $x\in E_1^{s,t,n}$ in bidegree $(t-s-n, s+n)$. Thus the elements that may contribute to the same bidegree of $Ext_{A}^{s,t}(\A \sma D\A)$ are placed together. With this arrangement any differential in the algebraic $tmf$ spectral sequence will look like Adams $d_1$ differential. The generators of
\[E_1^{s,t,n}=\bigoplus_{i_1,\ldots,i_n\geq 1}Ext_{A(2)}^{s-n,t-8(i_1+\cdots+i_n)}(\A\sma D\A\sma bo_{i_1}\sma\ldots\sma bo_{i_n})\]
will be denoted by dots in the following manner (recall that $bo_0=S^0$):
\begin{itemize}
\item elements with $n=0$ are denoted by a $\bullet$,
\item elements with $n=1,i_1=1$ are denoted by a $\circ^1$,
\item elements with $n=1,i_1=2$ are denoted by a $\circ^2$,
\item elements with $n=2,i_1=1,i_2=1$ are denoted by a $\odot$,
\item and $N/A$ stands for `not applicable,' i.e. coordinates of the table which are irrelevant to our arguments.
\end{itemize}
\end{notn}

\begin{lem} \label{lem:lifttoA} The elements $g^6$ and $v_2^{20}h_1$ lift to $Ext_{A}^{s,t}(\A \sma D\A)$ under the map 
\[ \unit_*: Ext_{A}^{s,t}(\A \sma D\A) \longrightarrow Ext_{A(2)}^{s,t}(\A \sma D\A).\]
\end{lem}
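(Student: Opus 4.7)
The plan is to apply the algebraic $tmf$ spectral sequence of Section~\ref{two} to $X = \A \sma D\A$. Both $g^6 \in Ext_{A(2)}^{24,144}(\A \sma D\A)$ and $v_2^{20}h_1 \in Ext_{A(2)}^{21,142}(\A \sma D\A)$ originate in the algebraic $tmf$ filtration $n = 0$ line, so the question is whether each is a permanent cycle. Since a $d_r$ differential on such a class at bidegree $(s,t)$ lands in $E_r^{s+1,t,r}$, which is a subquotient of
\[ E_1^{s+1,t,r} = \bigoplus_{i_1, \ldots, i_r \geq 1} Ext_{A(2)}^{s+1-r,\, t-8(i_1 + \cdots + i_r)}(\A \sma D\A \sma bo_{i_1} \sma \cdots \sma bo_{i_r}), \]
it suffices to verify that these $E_1$ groups vanish for every $r \geq 1$.

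For $g^6$, combining the vanishing line of Lemma~\ref{lem:vanishbo} with the trivial bound $i_1 + \cdots + i_n \geq n$ shows that $E_1^{25,144,n}$ vanishes whenever $4(i_1+\cdots+i_n) > 3n$, which holds automatically for every $n \geq 1$. Thus $g^6$ is a permanent cycle on purely numerical grounds, with no auxiliary computation required.

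For $v_2^{20}h_1$, the analogous estimate shows vanishing of $E_1^{22,142,n}$ whenever $4(i_1+\cdots+i_n) > 8 + 3n$. The vanishing line therefore handles every $n \geq 9$, and, for each $n \leq 8$, all sufficiently spread-out tuples $(i_1, \ldots, i_n)$. The residual exceptional tuples form a short finite list (for instance $n=1$, $i_1 \in \{1,2\}$; $n=2$, $i_1+i_2 \in \{2,3\}$; and analogous cases running up to $n=8$ with all $i_j = 1$). For each such tuple, I would invoke Bruner's program, exactly as in the proof of Lemma~\ref{v2^8lifts}, to check that
\[ Ext_{A(2)}^{22-n,\, 142 - 8(i_1 + \cdots + i_n)}(\A \sma D\A \sma bo_{i_1} \sma \cdots \sma bo_{i_n}) = 0 \]
for each of the four homotopy types of $\A$; the relevant output should be compiled in Appendix~\ref{appendixB}.

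The principal obstacle is the $v_2^{20}h_1$ half of the statement: enumerating those residual small-$n$, small-$\sum i_j$ tuples not covered by the vanishing line, and then verifying vanishing of the corresponding $Ext$ groups for each of the four models of $\A$. In contrast to $g^6$, which closes immediately from range considerations, the $v_2^{20}h_1$ case does not reduce to a pure inequality and genuinely requires appeal to the computer-algebra output.
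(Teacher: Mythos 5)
Your treatment of $g^6$ is fine, and in fact slightly cleaner than the paper's: the inequality $4(i_1+\cdots+i_n)>3n$ follows from $i_1+\cdots+i_n\geq n$ alone, so $g^6$ is a permanent cycle of the algebraic $tmf$ spectral sequence for all four models without consulting any charts. Your reduction for $v_2^{20}h_1$ is also set up correctly (the exceptional tuples are exactly those with $4\sum i_j\leq 8+3n$, so $n\leq 8$). The gap is in the final step: the residual $Ext$ groups you propose to check do \emph{not} all vanish. For $\A=\A[00]$ and $\A=\A[11]$, the group $Ext_{A(2)}^{20,126}(\A\sma D\A\sma bo_1\sma bo_1)$ (the $n=2$, $i_1=i_2=1$ summand of $E_1^{22,142,2}$) is nonzero --- it is four-dimensional, as recorded in Table~\ref{A1-00algtmf120} (the four $\odot$'s in position $(120,22)$) and in Table~\ref{D:ADA1-00bo1bo1}. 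So your Bruner computation would come back nonzero, and the argument that $v_2^{20}h_1$ is a permanent cycle of the algebraic $tmf$ spectral sequence collapses for these two models; a priori $v_2^{20}h_1$ could support a $d_2$ hitting one of these classes.

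The missing idea is multiplicative. One has $v_2^{20}h_1=b_{3,0}^8\cdot v_2^4h_1$ in $Ext_{A(2)}^{21,142}(\A\sma D\A)$, the class $b_{3,0}^8$ lifts to $Ext_A^{16,112}(\A\sma D\A)$ by Corollary~\ref{cor:liftA}, and the map $Ext_A^{*,*}(\A\sma D\A)\to Ext_{A(2)}^{*,*}(\A\sma D\A)$ is a map of algebras. It therefore suffices to lift $v_2^4h_1\in Ext_{A(2)}^{5,30}(\A\sma D\A)$, and in stem $25$ the target positions do satisfy the vanishing you need (Table~\ref{A1-00algtmf25}: position $(24,6)$ contains only an algebraic-$tmf$-filtration-zero class, which cannot be hit by a differential out of filtration zero). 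This is exactly how the paper closes the $\A[00]$/$\A[11]$ case; for $\A[01]$ and $\A[10]$ your direct approach does go through, since there the relevant groups in position $(120,22)$ vanish. Without the factorization through $v_2^4h_1$ your proof is incomplete for two of the four models.
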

\begin{proof}We use the algebraic $tmf$ spectral sequence to show that $g^6$ and $v_2^{20}h_1$ lift to $Ext_{A}(\A \sma D\A)$. A $d_r$ differential in the algebraic $tmf$ spectral sequence will increase the algebraic $tmf$ filtration by $r$. Since $g^6$ and $v_{2}^{20}h_1$ are in algebraic $tmf$ filtration $0$, they cannot be a target of a differential. We will now show that both $g^6$ and $v_2^{20}h_1$ cannot support a nonzero differential. The argument varies for different models of $A_1$. 
\begin{component}[Case 1]When $A_1 = A_1[01]$, Table~\ref{A1-01algtmf120} shows the relevant part of the $E_1$ page of the algebraic $tmf$ spectral sequence. 
\begin{table}[H] 
\caption{$E_1$ page of the algebraic $tmf$ spectral sequence for $Ext_A^{s,t}(A_1\sma DA_1)$, where $\A=\A[01]$} 
\[
\begin{array}{|c|c|c|c|} 
\hline
s\backslash t-s&119&120&121\\ [0.5ex] 
\hline 
25&0&\mbox{N/A}&\mbox{N/A}\\\hline
24&\mbox{N/A}&\lbrace\bullet\bullet\bullet\bullet\rbrace:=X^0_{24}\ni g^6&\mbox{N/A}\\\hline
23&\mbox{N/A}&\mbox{N/A}&\mbox{N/A}\\\hline
22&\mbox{N/A}&0&\mbox{N/A}\\\hline
21&\mbox{N/A}&\mbox{N/A}&\lbrace\bullet\rbrace:=X^0_{21}\ni v_2^{20}h_1\\
&&&\circ^1\circ^1\circ^1\circ^1\circ^1\\
&&&\circ^2\circ^2\circ^2\circ^2\\
&&&\odot\odot\\\hline
\end{array}\] 
\label{A1-01algtmf120}
\end{table}
Elements of $X_{24}^0$ or $X_{21}^0$ in Table~\ref{A1-01algtmf120} clearly do not support a differential, and hence $g^6$ and $v_2^{20}h_1$ lift to $Ext_{A}^{s,t}(\A \sma D\A)$.
\end{component}
\begin{component}[Case 2]The case $\A= \A[10]$ is very similar to the previous one. 
\begin{table}[H] 
\caption{$E_1$ page of the algebraic $tmf$ spectral sequence for $Ext_A^{s,t}(A_1\sma DA_1)$, where $\A=\A[10]$} 
\[\begin{array}{|c|c|c|c|} 
\hline
s\backslash t-s&119&120&121\\ [0.5ex] 
\hline 
25&0&\mbox{N/A}&\mbox{N/A}\\\hline
24&\mbox{N/A}&\lbrace\bullet\bullet\bullet\bullet\rbrace:=X^0_{24}\ni g^6&\mbox{N/A}\\\hline
23&\mbox{N/A}&\mbox{N/A}&\mbox{N/A}\\\hline
22&\mbox{N/A}&0&\mbox{N/A}\\\hline
21&\mbox{N/A}&\mbox{N/A}&\lbrace\bullet\rbrace:=X^0_{21}\ni v_2^{20}h_1\\
&&&\circ^1\circ^1\circ^1\circ^1\circ^1\\
&&&\circ^2\circ^2\circ^2\circ^2\\
&&&\odot\odot\\\hline
\end{array}\] 
\label{A1-10algtmf120}
\end{table}
Elements of $X_{24}^0$ or $X_{21}^0$ in Table~\ref{A1-10algtmf120} clearly do not support a differential, and hence $g^6$ and $v_2^{20}h_1$ lift to $Ext_{A}^{s,t}(\A \sma D\A)$.
\end{component}
\begin{component}[Case 3] The analysis for $\A= \A[00]$ or $\A= \A[11]$ are the same as $A_1[00]$ and $A_1[11]$ are dual to each other. In either case the $E_1$-page of the algebraic $tmf$ spectral sequence around stem $120$ looks like the following:   
\begin{table}[H] 
\caption{$E_1$ page of the algebraic $tmf$ spectral sequence for $Ext_A^{s,t}(A_1\sma DA_1)$, where $\A=\A[00]$ or $\A=\A[11]$} 
\[
\begin{array}{|c|c|c|c|} 
\hline
s\backslash t-s&119&120&121\\ [0.5ex] 
\hline 
25&\bullet&\mbox{N/A}&\mbox{N/A}\\\hline
24&\mbox{N/A}&\lbrace\bullet\bullet\bullet\bullet\bullet\rbrace:=X^0_{24}\ni g^6&\mbox{N/A}\\\hline
23&\mbox{N/A}&\mbox{N/A}&\mbox{N/A}\\\hline
22&\mbox{N/A}&\bullet\bullet\bullet&\mbox{N/A}\\
&&\odot\odot\odot\odot&\\\hline
21&\mbox{N/A}&\mbox{N/A}&\lbrace\bullet\bullet\rbrace:=X^0_{21}\ni v_2^{20}h_1\\
&&&\circ^1\circ^1\circ^1\circ^1\circ^1\\
&&&\circ^2\circ^2\circ^2\circ^2\circ^2\circ^2\\
&&&\odot\odot\odot\odot\odot\odot\\\hline
\end{array}\] 
\label{A1-00algtmf120}
\end{table}
Elements of $X_{24}^0$ in Table~\ref{A1-00algtmf120} clearly do not support a differential, and hence $g^6$ lifts to $Ext_{A}^{s,t}(\A \sma D\A)$. Unfortunately, it is possible that an element of $X_{21}^0$ might support a differential.

However, it is known that $v_2^{20}h_1$ is a multiple of $b_{3,0}^8$ under the pairing 
\begin{eqnarray*}
Ext_{A(2)}^{16,96+16}(S^{0}) \otimes Ext_{A(2)}^{5,25+5}(S^0) &\longrightarrow& Ext_{A(2)}^{21,121+21}(S^0)\\
b_{3,0}^8\otimes v_2^4h_1&\mapsto&v_2^{20}h_1.
\end{eqnarray*}
Therefore the same is true for $v_2^{20}h_1\in Ext_{A(2)}^{21,121+21}(\A \sma D\A)$ as 
\[ \unit_*: Ext_{A(2)}^{s,t}(S^0) \to Ext_{A(2)}^{s,t}(\A \sma D\A)\]
is a map of algebras. By Corollary~\ref{cor:liftA}, we know that $b_{3,0}^8$ lifts to $Ext_{A}^{16,96+16}(\A \sma D\A)$. If we show that $v_2^4h_1$ lifts to $Ext_{A}^{5,25+5}(\A \sma D\A)$ as well, then the result will follow as  
\[ H_{tmf*}: Ext_{A}^{s,t}(\A \sma D\A) \to Ext_{A(2)}^{s,t}(\A \sma D\A)\] 
is a map of algebras. Looking at Table~\ref{A1-00algtmf25} makes it clear that every element of $b_{3,0}^{-8}X^0_{21}$, including $v_2^4h_1$, lifts to $Ext_A^{5,25+5}(A_1\sma DA_1)$, and hence that every element of $X^0_{21}$, including $v_2^{20}h_1$, lifts to $Ext_A^{21,121+21}(A_1\sma DA_1)$.
\begin{table}[H] 
\caption{$E_1$ page of the algebraic $tmf$ spectral sequence for $Ext_A^{s,t}(A_1\sma DA_1)$, where $\A=\A[00]$ or $\A=\A[11]$} 
\[
\begin{array}{|c|c|c|} 
\hline
s\backslash t-s&24&25\\ [0.5ex] 
\hline 
6&\bullet&\bullet\bullet\bullet\\\hline
5&\mbox{N/A}&\lbrace\bullet\bullet\rbrace=b_{3,0}^{-8}X^0_{21}\\
&&\circ^1\\\hline
\end{array}\] 
\label{A1-00algtmf25}
\end{table}
\end{component}
\end{proof}

The lift of $v_2^{20}h_1 \in Ext_{A(2)}^{21,121+21}(\A \sma D\A)$ to $Ext_{A}^{21,121+21}(\A \sma D\A)$ found in Lemma~\ref{lem:lifttoA} is not unique. More precisely, every such lift is
\[v_2^{20}h_1+S \in  Ext_{A}^{21,121+21}(\A \sma D\A)\]
for some element $S$ in the higher algebraic $tmf$ filtration. Notice that the Adams differentials $d_i(S)$ are zero for $i \in \lbrace 2,3\rbrace$ as there are no element of algebraic $tmf$ filtration greater than zero in $Ext_{A}^{10,10+47}(\A \sma D\A)$ and $Ext_{A}^{11,11+47}(\A \sma D\A)$. Therefore the following lemma holds for any choice of lift of $v_2^{20}h_1 \in Ext_{A(2)}^{21,121+21}(\A \sma D\A)$. 
 
\begin{lem}\label{d3}In the Adams spectral sequence
\[Ext_A^{s,t}(\A\sma D\A)\To\pi_{t-s}(\A\sma D\A),\]there exists a $d_3$ differential
\[d_3(v_2^{20}h_1)=g^6.\]
\end{lem}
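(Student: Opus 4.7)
The plan is to prove this lemma by transporting the $d_3$ over $A(2)$ from Lemma~\ref{d3overA2} up to $A$ via the map of Adams spectral sequences induced by the unit $S^0 \to tmf$ (which on $Ext$-pages is the map $H_{tmf*}\colon Ext_{A}^{s,t}(-)\to Ext_{A(2)}^{s,t}(-)$ coming from $H^*(tmf)\iso A\modmod A(2)$). Using Lemma~\ref{lem:lifttoA}, we first choose lifts $\widetilde{v_2^{20}h_1}\in Ext_A^{21,121+21}(\A\sma D\A)$ and $\widetilde{g^6}\in Ext_A^{24,120+24}(\A\sma D\A)$ of $v_2^{20}h_1$ and $g^6$. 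By the abusive notation conventions adopted in Notation~\ref{subsec:not}, the statement $d_3(v_2^{20}h_1)=g^6$ will really mean that some lift $\widetilde{v_2^{20}h_1}$ supports a $d_3$ differential hitting $\widetilde{g^6}$ modulo elements in higher algebraic $tmf$ filtration.

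First I would verify that $\widetilde{v_2^{20}h_1}$ survives to the $E_3$ page, i.e.\ $d_2(\widetilde{v_2^{20}h_1})=0$ in the Adams spectral sequence over $A$. By naturality, the image of this element under $H_{tmf*}$ is $d_2(v_2^{20}h_1)$ in the Adams spectral sequence for $tmf_*(\A\sma D\A)$, and Lemma~\ref{d3overA2} tells us that the relevant differential on $v_2^{20}h_1$ is a $d_3$, not a $d_2$; so the image is zero. Therefore $d_2(\widetilde{v_2^{20}h_1})$ lies in the kernel of $H_{tmf*}$, which is exactly the part of $Ext_A^{23,121+22}(\A\sma D\A)$ detected in positive algebraic $tmf$ filtration. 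One then rules out such elements in this bidegree using the algebraic $tmf$ spectral sequence exactly as in Lemma~\ref{lem:lifttoA}: produce the table of $E_1^{s,t,n}$ for $n\geq 1$ at $(s,t)=(23,143)$, and invoke the vanishing line of Lemma~\ref{lem:vanishbo} together with Bruner's program \cite{Bru} to check all four models of $\A$. This is the main obstacle, and is entirely analogous to the case analysis carried out in Lemma~\ref{lem:lifttoA}.

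Once $d_2(\widetilde{v_2^{20}h_1})=0$ is established, the $d_3$ of $\widetilde{v_2^{20}h_1}$ is well defined. Applying naturality again, $H_{tmf*}(d_3(\widetilde{v_2^{20}h_1}))=d_3(v_2^{20}h_1)=g^6$ by Lemma~\ref{d3overA2}. Hence $d_3(\widetilde{v_2^{20}h_1})$ and $\widetilde{g^6}$ both map to $g^6$, so they differ by an element of $Ext_A^{24,120+24}(\A\sma D\A)$ in positive algebraic $tmf$ filtration, giving
\[ d_3(\widetilde{v_2^{20}h_1}) \;=\; \widetilde{g^6} + T \]
for some correction $T$ in higher algebraic $tmf$ filtration. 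In the abusive notation this reads $d_3(v_2^{20}h_1)=g^6$, which is the stated differential.

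Finally, I would remark on independence of the choice of lift: any two lifts of $v_2^{20}h_1$ differ by an $S$ of positive algebraic $tmf$ filtration, and the preceding remark (based on the vanishing of algebraic $tmf$ contributions in the relevant bidegrees) guarantees $d_2(S)=d_3(S)=0$. Thus the $d_3$ we produced is intrinsic, and the conclusion does not depend on which lift from Lemma~\ref{lem:lifttoA} we used. The heart of the argument is the vanishing check in Step 1, which reduces to a finite computation carried out with Bruner's program across the four $A$-module structures on $A(1)$.
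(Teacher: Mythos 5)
Your overall strategy---transport the $d_3$ of Lemma~\ref{d3overA2} along the map of Adams spectral sequences induced by $H_{tmf}$, and control both the choice of lift and the possible $d_2$ on it via the algebraic $tmf$ filtration---is exactly the paper's approach, and your treatment of the $d_2$-vanishing in bidegree $(23,143)$ is if anything more explicit than the paper's. But there is one genuine gap: you stop at
\[ d_3\bigl(\widetilde{v_2^{20}h_1}\bigr) = \widetilde{g^6} + T, \qquad T \ \text{of positive algebraic $tmf$ filtration},\]
and then declare that ``in the abusive notation this reads $d_3(v_2^{20}h_1)=g^6$.'' That reinterprets the lemma as an equality modulo higher filtration, which is weaker than what is claimed and weaker than what is needed. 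Compare with Lemma~\ref{lem:diffinA}, where the paper deliberately writes $d_2(b_{3,0}^4)=e_0r+R$ and $d_3(b_{3,0}^8)=wgr+S$ \emph{with} the correction terms; in Lemma~\ref{d3} no correction term appears in the statement, and the paper's proof ends by showing $R=0$. The exact form matters downstream: in Section~\ref{four} one repeatedly applies the Leibniz rule to get $d_3(v_2^{20}h_1\cdot y)=g^6\cdot y$ and concludes that specific classes are killed; a nonzero $T$ would contribute an extra term $T\cdot y$ that would have to be analyzed separately.

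The missing step is fortunately short, and the data to supply it is already in the tables you cite for the $d_2$ argument: Tables~\ref{A1-01algtmf120}, \ref{A1-10algtmf120} and \ref{A1-00algtmf120} show that in the bidegree of $g^6$ (stem $120$, Adams filtration $24$) the $E_1$ page of the algebraic $tmf$ spectral sequence consists only of classes in algebraic $tmf$ filtration zero (the sets $X_{24}^0$ contain only $\bullet$'s, with no $\circ$ or $\odot$ contributions). Hence $Ext_A^{24,144}(\A\sma D\A)$ has no elements of positive algebraic $tmf$ filtration, so $T=0$ and the differential is exactly $d_3(v_2^{20}h_1)=g^6$. You should add this observation to close the argument.
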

\begin{proof} Consider the map of Adams spectral sequence
\[ 
\xymatrix{
E_2^{s,t} = Ext_{A}^{s,t}(\A \sma D\A) \ar@{=>}[r] \ar[d] & \pi_{t-s}(\A \sma D\A) \ar[d] \\
E_2^{s,t} = Ext_{A(2)}^{s,t}(\A \sma D\A) \ar@{=>}[r]  & tmf_{t-s}(\A \sma D\A) 
}
\]
induced by $H_{tmf}$. The fact that $g^6$ and $v_2^{20}h_1$ are nonzero in the $E_3$ page of the Adams spectral sequence for $tmf_{*}(\A \sma D\A)$ (see Lemma~\ref{d3overA2}), forces $g^6$ and $v_2^{20}h_1$ have nonzero lift in the $E_3$ page of the Adams spectral sequence for $\pi_*(\A \sma D\A)$. Moreover the map of $E_3$ pages of the spectral sequences commutes with differentials. Thus in the $E_3$ page of the Adams spectral sequence for $\pi_{*}(\A \sma D\A)$
\[ d_3(v_2^{20}h_1) = g^6 + R,\]
where $R$ is an element of algebraic $tmf$ filtration greater than zero. Furthermore, Table~\ref{A1-01algtmf120}, Table~\ref{A1-10algtmf120} and Table~\ref{A1-00algtmf120} make clear that in the bidegree of $g^6$, there are no elements of higher algebraic $tmf$ filtration, and therefore $R=0$.
\end{proof}

\section{$A_1$ admits a 32 periodic $v_2$-self-map}\label{four}

In Section~\ref{two}, we established that the potential candidates for $8$-periodic and $16$-periodic $v_2$-self-maps on $A_1$ support a $d_2$ and a $d_3$ differentials respectively (see Lemma~\ref{lem:diffinA}). So we know by the Leibniz formula that the candidates for $32$-periodic $v_2$-self-map is a nonzero $d_3$-cycle. So the only way these candidates can fail to converge to an element of $\pi_*(A_1\sma DA_1)$ is by supporting a $d_r$ differential for $r \geq 4$ in the Adams spectral sequence 
\[ E_2 = Ext_A(\A \sma D\A) \Rightarrow \pi_*(\A \sma D\A).\]
 So we look for potential targets in $Ext_A^{s,t}(A_1 \sma DA_1)$ when $t-s = 191$ with Adams filtration $s \geq 36$. In order to detect elements with $t-s=191$ we use the \emph{algebraic}-$tmf$ spectral sequence
\[ E_1^{s,t,n} = Ext_{A(2)}^{s-n, t}(\overline{A//A(2)}^{\otimes n}\otimes H^{*}(X),\Z/2). \]

As pointed out in Remark~\ref{rem:nonunique2} the candidates for $32$-periodic $v_2$-self-map may not be unique. To show the existence it is enough to show that one of those candidates is a nonzero permanent cycle in the $E_{\infty}$ page of the Adams spectral sequence.  We conveniently choose $b_{3,0}^{4n} \in Ext_{A}^{8n,48n+8n}(\A \sma D\A)$ to be the lift of $b_{3,0}^{4n} \in Ext_{A(2)}^{8n,48n+8n}(\A \sma D\A)$ whose algebraic $tmf$ filtration is precisely zero.  

Recall that, as an $A(2)$-module
\[ A \modmod A(2) = \bigoplus_{i \in \mathbb{N}} H^{*}(\Sigma^{8i}bo_i)\]
where the $bo_i$ are the $bo$ Brown-Gitler spectra defined by Goerss, Jones and the third author~\cite{GJM}.
Because of this splitting we get  
\begin{eqnarray*}
 E_1^{s,t,n} &=& \bigoplus_{i_1 \ldots, i_n \geq 1}Ext_{A(2)}^{s-n, t-8(i_1 + \ldots + i_n)}(bo_{i_1} \sma \ldots \sma bo_{i_n}\sma \A \sma D\A)
\end{eqnarray*}
for the $E_1$ page of the algebraic $tmf$ spectral sequence.  


%
An easy consequence of the vanishing line established in Lemma~\ref{lem:vanishbo} is the following. 
\begin{lem} \label{lem:vanish}The only potential contributors to $Ext_A^{s,t}(A_1 \sma DA_1)$ for $t-s = 191$ and $s \geq 36$ come from the following summands of the algebraic $tmf$ $E_1$ page:
\begin{eqnarray*}
 			& &	Ext_{A(2)}^{s,t}(A_1 \sma DA_1)\\
			 &\oplus& \bigoplus_{1 \leq i \leq 3} Ext_{A(2)}^{s-1, t-8i}(A_1 \sma DA_1 \sma bo_i)\\
 			&\oplus&  \bigoplus_{1 \leq i \leq 2} Ext_{A(2)}^{s-2, t-8-8i}(A_1 \sma DA_1 \sma bo_1 \sma bo_i) \\ 
			&\oplus&   Ext_{A(2)}^{s-3,t-24}(A_1 \sma DA_1 \sma bo_1 \sma bo_1 \sma bo_1 ).
\end{eqnarray*}
\end{lem}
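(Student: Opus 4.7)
The plan is to apply the vanishing line of Lemma~\ref{lem:vanishbo} summand-by-summand to the $E_1$ page of the algebraic $tmf$ spectral sequence, which decomposes as
\[
E_1^{s,t,n} = \bigoplus_{i_1, \ldots, i_n \geq 1} Ext_{A(2)}^{s-n,\, t-8I}(A_1 \sma DA_1 \sma bo_{i_1} \sma \cdots \sma bo_{i_n}),
\]
with $I = i_1 + \cdots + i_n$, converging to $Ext_A^{s,t}(A_1 \sma DA_1)$. The idea is simply that, in the range of interest, the vanishing line reduces this direct sum to a short finite list of summands.

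First, I would translate Lemma~\ref{lem:vanishbo} into a concrete constraint on $(s, n, I)$: the summand above is forced to zero unless $5(s-n) \leq (t-s) - 8I + n + 6$, equivalently $5s + 8I \leq (t-s) + 6 + 6n$. Setting $t-s = 191$ and using $s \geq 36$ reduces this to
\[
8I \leq 17 + 6n.
\]
Since each $i_k \geq 1$ forces $I \geq n$, combining the two inequalities yields $2n \leq 17$, so $n \leq 8$. Thus at most finitely many summands can be nonzero, reducing the problem to a finite enumeration.

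Next, for each $n \in \{0, 1, 2, 3\}$ I would list the tuples $(i_1, \ldots, i_n)$ with $i_1 + \cdots + i_n \leq \lfloor (17+6n)/8 \rfloor$; a direct count reproduces exactly the four classes of summands stated in the lemma. The main obstacle is to dispose of the cases $4 \leq n \leq 8$, together with the borderline high-$I$ tuples for $n \leq 3$ that the bare inequality still formally permits (for instance $(1,1,2)$ at $n = 3$, or $(1,1,1,1)$ at $n = 4$). For these the slope-$1/5$ vanishing line is saturated without being sharp, so I would either strengthen the vanishing argument by tracking the effective connectivity of $bo_{i_1} \sma \cdots \sma bo_{i_n}$ more carefully (in the spirit of the improved line for $t-s \geq 29$ in Lemma~\ref{vanishA2}), or, as a fallback, invoke Bruner's program~\cite{Bru} on the handful of residual $Ext_{A(2)}$ bidegrees to verify they vanish. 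Either route trims the list down to the four classes of summands appearing in the conclusion.
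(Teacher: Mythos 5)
Your reduction is, in substance, the same argument the paper gives --- and in fact the paper gives nothing more: its entire justification for Lemma~\ref{lem:vanish} is the sentence that the statement is ``an easy consequence'' of the vanishing line of Lemma~\ref{lem:vanishbo}, with no enumeration written down. Your translation of that vanishing line into $5s+8I\leq (t-s)+6+6n$, the specialization $8I\leq 17+6n$ at $t-s=191$, $s\geq 36$, and the bound $n\leq 8$ via $I\geq n$ are all correct. More importantly, your observation that the bare inequality does \emph{not} by itself eliminate everything outside the stated list is also correct: it still permits, for example, $bo_1\sma bo_1\sma bo_2$ at $n=3$ and the $n$-fold smash power of $bo_1$ for $4\leq n\leq 8$ (at $n=8$, $s=36$ the relevant group sits in filtration $28$ against a vanishing bound of $28.2$, so it squeaks by). So your proposal exposes a genuine gap in the paper's own one-line proof rather than containing a new one; the honest way to close it is your fallback of checking the finitely many residual $Ext_{A(2)}$ bidegrees with Bruner's program, which is exactly how the paper handles the identical situation in the proof of Lemma~\ref{v2^8lifts}. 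Your first proposed route --- sharpening the vanishing line by tracking connectivity --- is less promising, since each additional $bo_{i_k}$ factor has its bottom cell in dimension $0$ and therefore does not improve the line on its own. Two small corrections to your write-up: at $n=1$ the inequality actually forces $i_1\leq 2$, so the lemma's inclusion of $bo_3$ is harmless padding rather than something your count needs to reproduce; and your claim that the direct count for $n\leq 3$ reproduces the stated list ``exactly'' contradicts your own (correct) identification of $(1,1,2)$ at $n=3$ as a surviving borderline tuple --- the count gives a list that is neither contained in nor containing the lemma's, and only the surplus tuples require the residual check.
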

We know that, in the Adams spectral sequence for $\A \sma D\A$, $b_{3,0}^{16}$ can support $d_r$ differential only if $r \geq 4$. The broad idea is to show that all potential targets for a $d_r$ differential for $r \geq 4$ are either zero or do not lift to $E_4$ page. While the result holds for all models of $\A$, the computations will be slightly different for different models, and so we will treat these models separately. Since $\A[00]$ and $\A[11]$ are Spanier-Whitehead dual to each other, we can treat the cases of $\A[00]$ and $\A[11]$ as one case. We will then have to treat the cases of the selfdual spectra $\A[01]$ and $\A[10]$ separately. The completeness of the tables in this section will be justified by the more detailed tables in Appendix~\ref{appendixC}.
\subsection{The case $\A=\A[00]$ or $\A=\A[11]$}
We begin by laying out, in Table~\ref{A1-00algtmf191}, the elements of the $E_1$ page of  algebraic $tmf$ spectral sequence, in Notation~\ref{algtmftables}. The table makes it clear that all elements with $t-s = 191$, with the possible exception of those in $X^0_{36}$, are permanent cycles in the algebraic $tmf$ spectral sequence.
\begin{table}[H] 
\caption{$E_1$ page of the algebraic $tmf$ spectral sequence for $Ext_A^{s,t}(A_1\sma DA_1)$, where $\A=\A[00]$ or $\A=\A[11]$, stem $189$-$191$} 
\centering
\begin{tabular}{|c|c|c|c|} 
\hline 
$s\backslash t-s$ & 189& 190 & 191 \\ [0.5ex] 
\hline 
40 &0& 0& 0 \\ \hline 
39& 0&$\lbrace\bullet\bullet\rbrace:=Y_{39}^0$ & $\lbrace\bullet\bullet\bullet\rbrace:=X_{39}^0$\\\hline
38& N/A&$\lbrace\bullet\bullet\bullet\bullet\bullet\rbrace:=Y_{38}^0$ & $\lbrace\bullet\bullet\bullet\rbrace:=X_{38}^0$\\ \hline
37& N/A&$\bullet\bullet\bullet\bullet\bullet$ & $\lbrace\bullet\bullet\bullet\bullet\bullet\rbrace:=X_{37}^0$\\ 
 & &$\circ^1\circ^1\circ^1\circ^1\circ^1\circ^1$ &$\lbrace\circ^1\circ^1\circ^1\circ^1\circ^1\circ^1\circ^1\circ^1\rbrace:=X_{37}^1$\\ \hline
36&N/A& N/A& $\lbrace\bullet\bullet\bullet\rbrace:=X_{36}^0$\\ 
 & & &$\lbrace\circ^1\circ^1\rbrace:=X_{36}^1$\\
 & & &$\lbrace\odot\odot\odot\odot\odot\odot\rbrace:=X_{36}^{1,1}$\\ \hline
\end{tabular} 
\label{A1-00algtmf191}
\end{table}
Our goal is to show that every linear combination of elements in $X^{i_1,\ldots,i_n}_s$ were either absent or zero in the $E_4$ page of the Adams spectral sequence. Using Bruner's program (for details see Tables~\ref{ADA1-00},~\ref{ADA1-00bo1},~\ref{ADA1-00bo2}, and~\ref{ADA1-00bo1bo1} of Appendix~\ref{appendixC}), we observe that a lot of these elements are multiples  of $g^6$ in the $E_1$ page of the algebraic $tmf$ spectral sequence, which we record in Table~\ref{A1-00algtmf71}.  
\begin{table}[H] 
\caption{$E_1$ page of the algebraic $tmf$ spectral sequence for $Ext_A^{s,t}(A_1\sma DA_1)$, where $\A=\A[00]$ or $\A=\A[11]$, stem $70$-$71$} 
\centering
\begin{tabular}{| c | c | c |} 
\hline
$s\backslash t-s$ & 70 & 71 \\ [0.5ex] 
\hline 
15&$\lbrace\bullet\bullet\rbrace=g^{-6}Y_{39}^0$&$\lbrace\bullet\bullet\bullet\rbrace=g^{-6}X_{39}^0$\\\hline
14&$\lbrace\bullet\bullet\bullet\bullet\bullet\rbrace=g^{-6}Y_{38}^0$&$\lbrace\bullet\bullet\bullet\bullet\rbrace=g^{-6}X_{38}^0$\\ \hline
13&$\bullet\bullet\bullet\bullet\bullet$&$\lbrace\bullet\bullet\bullet\bullet\bullet\bullet\bullet\rbrace=g^{-6}X_{37}^0$\\ 
 &$\circ^1\circ^1\circ^1\circ^1\circ^1\circ^1$&$\lbrace\circ^1\circ^1\circ^1\circ^1\circ^1\circ^1\circ^1\circ^1\rbrace=g^{-6}X_{37}^1$\\ \hline
12&N/A&$\lbrace\circ^1\circ^1\circ^1\circ^1\circ^1\circ^1\rbrace=g^{-6}X_{36}^1$\\
 & &$\lbrace\odot\odot\odot\odot\odot\odot\rbrace=g^{-6}X_{36}^{1,1}$\\ \hline
\end{tabular} 
\label{A1-00algtmf71}
\end{table}
 Tables~\ref{ADA1-00},~\ref{ADA1-00bo1},~\ref{ADA1-00bo2}, and~\ref{ADA1-00bo1bo1} make clear that
\begin{itemize}
\item multiplication by $g^6$ surjects onto $X_{39}^0\oplus X_{38}^0\oplus X_{37}^0\oplus X_{37}^1\oplus X_{36}^1\oplus X_{36}^{1,1}$, and
 \item  Elements in $g^{-1}(X_{39}^0\oplus X_{38}^0\oplus X_{37}^0\oplus X_{37}^1\oplus X_{36}^1\oplus X_{36}^{1,1})$ have nonzero images under multiplication by $v_2^{20}h_1$ if and only if multiplication by $g^6$ is nonzero.
\end{itemize}
\begin{lem}\label{4.2}
Every element of\[X_{39}^0\oplus X_{38}^0\oplus X_{37}^0\oplus X_{37}^1\oplus X_{36}^1\oplus X_{36}^{1,1}\]is present in the Adams $E_2$ page, but is either zero or absent in the Adams $E_4$ page.
\end{lem}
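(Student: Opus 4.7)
The plan is to exhibit every nonzero element $y$ of
\[X_{39}^0 \oplus X_{38}^0 \oplus X_{37}^0 \oplus X_{37}^1 \oplus X_{36}^1 \oplus X_{36}^{1,1}\]
as a $d_3$-boundary in the Adams spectral sequence for $\A \sma D\A$, using Lemma~\ref{d3} (which gives $d_3(v_2^{20}h_1) = g^6$) together with the Leibniz rule. Once this is achieved, the ``zero or absent in $E_4$'' dichotomy in the statement follows immediately: classes already zero in $E_2$ need no further argument, while each nonzero $y$ is killed by the $d_3$ just constructed.

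The first bulleted observation preceding the lemma says that multiplication by $g^6$ surjects onto the target direct sum, so given any $y$ I can choose a preimage $x$ in the ``$g^{-6}X$'' groups displayed in Table~\ref{A1-00algtmf71}. The second bullet ensures that whenever $y = g^6 \cdot x$ is nonzero, the class $v_2^{20}h_1 \cdot x$ is also nonzero in $Ext_A^{*,*}(\A \sma D\A)$. Assuming momentarily that $x$ is a $d_2$- and $d_3$-cycle and that $v_2^{20}h_1 \cdot x$ is a $d_2$-cycle, the Leibniz rule in the multiplicative Adams spectral sequence of the ring spectrum $\A \sma D\A$ yields
\[d_3(v_2^{20}h_1 \cdot x) = d_3(v_2^{20}h_1)\cdot x + v_2^{20}h_1 \cdot d_3(x) = g^6 \cdot x = y,\]
and hence $y$ is a $d_3$-boundary as desired.

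The main obstacle is verifying that the chosen preimage $x$ and the product $v_2^{20}h_1 \cdot x$ are in fact cycles under the relevant lower differentials. These are bidegree-by-bidegree checks: the preimages $x$ sit in stems $70$ or $71$ at Adams filtration between $12$ and $15$ (see Table~\ref{A1-00algtmf71}), so their potential $d_2$- and $d_3$-targets live in a short list of neighboring bidegrees that can be enumerated with the algebraic $tmf$ spectral sequence and Bruner's program (cf.~Appendix~\ref{appendixC}). Each candidate target must then be shown to vanish or to be inaccessible from $x$, using the catalogue of $d_2$ and $d_3$ differentials already established in Lemma~\ref{lem:diffinA} and Lemma~\ref{d3}. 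The $\circ^1$ classes of $X_{37}^1$ and $X_{36}^1$ and the $\odot$ classes of $X_{36}^{1,1}$ require the most care, since their preimages under $g^6$ live at positive algebraic $tmf$ filtration and so their $d_2$- and $d_3$-behaviour can a priori interact with differentials coming from lower rows of the algebraic $tmf$ spectral sequence; handling these will likely account for the bulk of the bookkeeping in the actual proof.
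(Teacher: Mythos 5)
Your central device---using $d_3(v_2^{20}h_1)=g^6$ from Lemma~\ref{d3} together with the Leibniz rule to exhibit classes in stem $191$ as $d_3$-boundaries of $v_2^{20}h_1$-multiples of classes in stem $71$---is exactly the paper's. But your plan has a genuine gap in how it proposes to handle the preimages, and it stems from a misreading of the dichotomy in the statement. You interpret ``zero or absent in $E_4$'' as ``already zero in $E_2$, or else a $d_3$-boundary,'' and accordingly you commit to showing that \emph{every} nonzero element is a boundary, which forces you to verify that every $g^6$-preimage $x$ is a $d_2$- and $d_3$-cycle. For preimages of $X_{39}^0\oplus X_{38}^0$ this check succeeds (their potential targets in stem $70$ lie above the vanishing line), but for preimages of $X_{37}^0$, which sit in bidegree $(t-s,s)=(71,13)$, the potential $d_2$-target is $g^{-6}Y_{39}^0$ in bidegree $(70,15)$, which is nonzero. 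You offer no argument that this $d_2$ vanishes or is ``inaccessible,'' and the paper does not establish this either---it deliberately avoids having to.

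The paper's resolution is the missing idea: when $d_2(y)=y'\neq 0$ with $y'\in g^{-6}Y_{39}^0$, one does not try to kill $x=g^6\cdot y$; instead, since multiplication by $g^6$ is a bijection from $g^{-6}Y_{39}^0$ onto $Y_{39}^0$, the Leibniz rule gives $d_2(x)=g^6\cdot y'\neq 0$, so $x$ itself supports a nonzero $d_2$ and is therefore \emph{absent} from $E_4$. That is precisely the second branch of the dichotomy, and it is needed because the paper never determines which alternative actually occurs. A second, smaller omission: for the positive algebraic $tmf$ filtration classes ($X_{37}^1$, $X_{36}^1$, $X_{36}^{1,1}$) you correctly flag that the preimages could interact with the algebraic $tmf$ spectral sequence, but the resolution there is not more bookkeeping---it is the observation that if $y$ is hit by an algebraic $tmf$ differential or an Adams $d_2$, then $x=g^6\cdot y$ is already zero in $E_3$, which lands you in the ``zero'' branch with no further work. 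In short, your Leibniz mechanism is right, but the proof needs the trichotomy (preimage is a permanent cycle; preimage is hit; preimage supports a $d_2$) rather than a uniform boundary argument.
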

\begin{proof} Notice that for any $x =g^6 \cdot y\in X_{39}^0\oplus X_{38}^0\oplus X_{37}^0\oplus X_{37}^1\oplus X_{36}^1\oplus X_{36}^{1,1}$, both $x$ and $y$ is a nonzero permanent cycle of the algebraic $tmf$ spectral sequence. Indeed, the target of any differential supported by $y$, must have algebraic $tmf$ filtration greater than $y$ and from Table~\ref{A1-00algtmf71} it is clear no such element is present in appropriate bidegree. Hence $y$ is present in the Adams $E_2$ page. Same argument holds for $x$.
\begin{component}[Case 1] When $x =g^6 \cdot y \in X_{39}^0\oplus X_{38}^0$, clearly  $y$ is then a permanent cycle in the Adams spectral sequence. Using Leibniz rule, we see that   
\[ d_2(x) = d_2(g^6 \cdot y) = 0\]
and 
\[d_3(v_2^{20}h_1 \cdot y)=v_2^{20}h_1 \cdot d_3(y)+d_3(v_2^{20}h_1)\cdot y=g^6 \cdot y = x.\] 
Therefore, if $x=g^6 \cdot y$ is nonzero in $E_3$ page, then $x$ is zero in $E_4$ page.  
\end{component} 
\begin{component}[Case 2] When $ x = g^6\cdot y\in  X_{37}^1\oplus X_{36}^1\oplus X_{36}^{1,1}$, then $d_r(y)$ for $r \geq 2$, if nonzero, must have algebraic $tmf$ filtration greater than zero, as  
\[ 
\xymatrix{
Ext_{A}(\A \sma D\A)\ar[d] \ar@{=>}[r] & \pi_*(\A \sma D\A) \ar[d] \\
Ext_{A(2)}(\A \sma D\A) \ar@{=>}[r] & tmf_*(\A \sma D\A)
}
\]
is a map of spectral sequence. Since there are no elements of algebraic $tmf$ filtration greater than zero in bigree $(s, 71+s)$ for $s\geq 14$, it follows that $d_r(y) =0$ for $r \geq 2$ and $y$ a permanent cycle in the Adams spectral sequence. If $y$ is a target of a differential in algebraic $tmf$ spectral sequence or a Adams $d_2$ differential, then $x=0$ in $E_3$ page. Consequently, $g^6x= 0$ in the $E_3$ page as well. If $y$ is not a target of such differentials, then we have 
\[d_3(v_2^{20}h_1\cdot y)=v_2^{20}h_1\cdot d_3(y)+d_3(v_2^{20}h_1) \cdot y=g^6\cdot y =x.\]
In either case, $g^6\cdot y = x =0$ in $E_4$ page. 
\end{component} 
\begin{component}[Case 3] When $x = g^6\cdot y \in X_{37}^0$ and $y$ is a permanent cycle, then we can argue  $g^6\cdot y = x =0$ in the $E_4$ page as we did in the previous cases. If 
\[ d_2(y) =y'\]
then $y'$ must belong to  $g^{-1}Y_{39}^{0}$. Since multiplication by $g^6$ is a bijection between $g^{-1}Y_{39}^{0}$ and $Y_{39}^{0}$, we get 
\[d_2(x) = d_2(g^6\cdot y)=g^6\cdot d_2(y)+d_2(g^6) \cdot y=g^6 \cdot y' \neq 0.\]
Therefore, $x$ is absent in the $E_4$ page.  
\end{component}
\end{proof}





Thus we are left with the case when $x \in X^0_{36}$.
\begin{lem}\label{4.3}
Every element of $X^0_{36}$ is either zero or absent in the Adams $E_4$ page.
\end{lem}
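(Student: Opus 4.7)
The proof will follow the same Leibniz-rule strategy used in Lemma~\ref{4.2}, but the three generators of $X_{36}^0$ present a new obstacle: Table~\ref{A1-00algtmf71} shows that the bidegree $(s,t-s)=(12,71)$ has no filtration-zero generators, so these elements cannot be written as $g^6 \cdot y$ with $y$ in algebraic $tmf$ filtration zero. The basic manoeuvre used in each case of Lemma~\ref{4.2} is therefore unavailable at face value, and we will have to allow our witnesses to sit in positive algebraic $tmf$ filtration.

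The first task is to use Bruner's program to pin down the three generators of $X_{36}^0$ explicitly as classes in $Ext_{A(2)}^{36, 191+36}(A_1 \sma DA_1)$, and to tabulate the action of the known sources $g^6$, $(e_0 r + R)$, $(wgr + S)$, $b_{3,0}^4$, and $v_2^{20} h_1$ on $Ext_A^{*,*}(A_1 \sma D A_1)$ in the neighbouring bidegrees. For each generator $x \in X_{36}^0$ we then search for a witness $y \in Ext_A^{*,*}(A_1 \sma DA_1)$, possibly in positive algebraic $tmf$ filtration, such that one of the products
\[ g^6 \cdot y, \qquad v_2^{20}h_1 \cdot y, \qquad (e_0 r + R)\cdot y \]
equals $x$ up to a tail lying in a bidegree that we can control. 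The Leibniz rule applied to $d_3(v_2^{20}h_1 \cdot y)$ or $d_2(b_{3,0}^4 \cdot y)$, combined with Lemmas~\ref{lem:diffinA} and~\ref{d3}, will then kill $x$ at the $E_3$ or $E_4$ page, exactly as in the three cases of Lemma~\ref{4.2}.

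The main obstacle is the bookkeeping required when $y$ is not in algebraic $tmf$ filtration zero. In that regime, the product $g^6 \cdot y$ in $Ext_A$ generally differs from the corresponding product in $Ext_{A(2)}$ by correction terms in higher algebraic $tmf$ filtration, and each such correction must be shown either to vanish by the vanishing line of Corollary~\ref{cor:vanishoverA} or to be killed by an already-established differential. A subsidiary obstacle is that, for a given $x \in X_{36}^0$, the appropriate witness $y$ might itself carry a nonzero $d_2$ or $d_3$, in which case we must iterate the Leibniz analysis one level deeper. Once the module-structure data from Bruner's program (collected in Appendix~\ref{appendixC}) is in hand, each of the three elements of $X_{36}^0$ can be dispatched by a short, case-by-case computation of this form.
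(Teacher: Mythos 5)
You have correctly spotted the real difficulty -- the elements of $X_{36}^0$ are not $g^6$-multiples of filtration-zero classes in stem $71$, so the mechanism of the preceding lemma is unavailable -- but your proposed repairs do not close the gap. First, the idea of allowing the witness $y$ to sit in positive algebraic $tmf$ filtration and still writing $x = g^6\cdot y$ is a dead end: multiplication by $g^6$ (which has algebraic $tmf$ filtration zero) can only preserve or increase that filtration, so such a product can never have a nonzero component detected in $X_{36}^0$, which by definition is the filtration-zero part. Second, your list of candidate multipliers ($g^6$, $v_2^{20}h_1$, $e_0r+R$) omits the one that actually does the work. The paper's proof factors the three generators $s_1, t_1, t_2$ of $X_{36}^0$ through the \emph{periodicity classes themselves}: Bruner's program gives $s_1 = b_{3,0}^4\cdot x_1$ and $t_i = b_{3,0}^8\cdot z_i$ with $x_1$ and $z_i$ permanent cycles in stems $143$ and $95$ (Table~\ref{A1-00algtmf95143}, Table~\ref{Lemma5.3:ADA1-00}), and then applies the Leibniz rule with $d_2(b_{3,0}^4)=e_0r+R$ and $d_3(b_{3,0}^8)=wgr+S$ from Lemma~\ref{lem:diffinA} to conclude $d_2(s_1)=e_0r\cdot x_1\neq 0$ and $d_3(\delta_1 t_1+\delta_2 t_2)=wgr\cdot(\delta_1 z_1+\delta_2 z_2)\neq 0$. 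That is, the elements of $X_{36}^0$ are shown to \emph{support} differentials, not to be targets of them.

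This matters concretely for $s_1$: according to the appendix tables it is not a $v_2^{20}h_1$-multiple of anything in bidegree $(15,70)$ (only $t_1=v_2^{20}h_1\cdot 15_1$ and $t_2=v_2^{20}h_1\cdot 15_2$ arise that way), it is not a $g^6$-multiple for the filtration reason above, and it cannot be realized as $(e_0r+R)\cdot y = d_2(b_{3,0}^4\cdot y)$, because a $d_2$-boundary is in particular a $d_2$-cycle, whereas $s_1$ satisfies $d_2(s_1)=e_0r\cdot x_1\neq 0$. So none of your three mechanisms can dispose of $s_1$. Your route via $v_2^{20}h_1$ does handle $t_1$ and $t_2$ (it is essentially the paper's $b_{3,0}^8$ argument in disguise, since $d_3(v_2^{20}h_1\cdot 15_i)=g^6\cdot 15_i=wgr\cdot z_i$), but you would still need the linear-independence bookkeeping the paper records -- $e_0r\cdot y_1=e_0r\cdot y_2=0$ and $wgr\cdot z_1$, $wgr\cdot z_2$ independent -- to rule out every nonzero linear combination of the three generators, not just the generators one at a time.
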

\begin{proof} $X^0_{36}$ is spanned by three generators $\lbrace s_1, t_1,t_2\rbrace$. Using Bruner's program (see ), we explore the following relations:
\[\begin{array}{rcl}
s_1 &=&b_{3,0}^4 \cdot x_1\\
t_1&=&b_{3,0}^4 \cdot y_1=b_{3,0}^8 \cdot z_1\\
t_2 &=&b_{3,0}^4\cdot y_2 =b_{3,0}^8 \cdot z_2\\
Y^0_{38} \ni e_0r \cdot x_1&\neq & 0\\
e_0r \cdot y_1&=& 0\\
e_0r \cdot y_2&=& 0\\
Y^0_{39}\ni wgr \cdot z_1&\neq & 0\\
Y^0_{39}\ni wgr \cdot z_2&\neq& 0
\end{array}\]
and $wgr \cdot z_1$ and $wgr \cdot z_2$ are linearly independent. In Bruner's notation, $s_1 = 36_{64}$, $t_1 = 36_{65}$, $t_2 =36_{66}$, $x_1 = 28_{32}$, $e_0r \cdot x_1 =38_{25}$, $y_1 =28_{33}$, $y_2 =28_{34}$, $z_1 = 20_{1}$, $wgr \cdot z_1 = 39_1$, $z_2 = 20_{2}$ and $wgr \cdot z_2 = 39_2$ (see Table~\ref{Lemma5.3:ADA1-00}) . 
\begin{table}[H] 
\caption{$E_1$ page of the algebraic $tmf$ spectral sequence for $Ext_A^{s,t}(A_1\sma DA_1)$, where $\A=\A[00]$ or $\A=\A[11]$} 
\[
\begin{array}{| c | c | c |} 
\hline
s\backslash t-s&94&95\\\hline
23&0&0\\\hline
22&0&0\\\hline
21&0&0\\\hline
20&\mbox{N/A}&\lbrace\bullet=z_1,\bullet = z_2\rbrace:=Z_{20}\\\hline\hline
s\backslash t-s&142&143\\\hline
30&0&0\\\hline
29&\bullet\bullet\bullet\bullet\bullet&\bullet\bullet\bullet\bullet\bullet\\\hline
28&\mbox{N/A}&\lbrace\bullet =x_1,\bullet=y_1,\bullet=y_2\rbrace:=Z_{28}\\
&&\circ^1\circ^1\\
\hline 
\end{array} \]
\label{A1-00algtmf95143}
\end{table}
From the Table~\ref{A1-00algtmf95143}, it is clear that any element in $Z_{20}$ and $Z_{28}$ are permanent cycles.
\begin{component}[Case 1] If $x = \epsilon_1 s_1 + \delta_1t_1 + \delta_2t_2 \neq 0$ in the Adams $E_2$ page with $\epsilon_1 \neq 0$, then 
\[ d_2(x) = \epsilon_1 (e_0r \cdot x_1) \neq 0.\]
Thus $x$ is not present in $E_4$ page.
\end{component}
\begin{component}[Case 2] If $x = \delta_1t_1 + \delta_2t_2 \neq 0$, then 
\[ d_2(x) = 0.\]
 If $x \neq 0$ in the Adams $E_3$ page then 
\[d_3(x) = \delta_1 d_3(b_{3,0}^4 \cdot z_1)+ \delta_2 d_3(b_{3,0}^4 \cdot z_2) =wgr \cdot (\delta_1z_1 + \delta_2 z_2) \neq 0  \]
Thus $x$ is not present in $E_4$ page.
\end{component}

\end{proof}

This proves Theorem~\ref{mainthm} in the cases $\A=\A[00]$ or $\A=\A[11]$.

\subsection{The case $\A=\A[01]$ or $\A=\A[01]$ } Even though, in principle, we should treat $\A[01]$ and $\A[10]$ as two different cases, but it turns out that Tables~\ref{A1-01algtmf191}, and \ref{A1-01algtmf71} are identical in both the case and the arguments remain exactly the same for both of them. For $\A[01]$, refer to Tables~\ref{ADA1-01},~\ref{ADA1-01bo1},~\ref{ADA1-01bo2}, and~\ref{ADA1-01bo1bo1} of Appendix~\ref{appendixC}, and for $\A[10]$, refer to Tables~\ref{ADA1-10},~\ref{ADA1-10bo1},~\ref{ADA1-10bo2}, and~\ref{ADA1-10bo1bo1} of Appendix~\ref{appendixC}, to observe that most of the elements in Table~\ref{A1-01algtmf191} are multiples by $g^6$ of elements in Table~\ref{A1-01algtmf71}. 

\begin{table}[H] 
\caption{$E_1$ page of the algebraic $tmf$ spectral sequence for $Ext_A^{s,t}(A_1\sma DA_1)$, where $\A=\A[01]$} 
\centering
\begin{tabular}{| c | c | c |} 
\hline
$s\backslash t-s$ & 190 & 191 \\ [0.5ex] 
\hline 
39&0& $\lbrace\bullet\rbrace:=X_{39}^0$\\\hline
38&$\lbrace\bullet\bullet\bullet\bullet\rbrace:=Y_{38}^0$ & $\lbrace\bullet\rbrace:=X_{38}^0$\\ \hline
37&$\bullet\bullet\bullet\bullet$ & $\lbrace\bullet\bullet\bullet\bullet\bullet\rbrace:=X_{37}^0$\\ 
 &$\circ^1\circ^1\circ^1\circ^1\circ^1\circ^1$ &$\lbrace\circ^1\circ^1\circ^1\circ^1\circ^1\circ^1\circ^1\circ^1\rbrace:=X_{37}^1$\\ \hline
36&N/A&$\lbrace\odot\odot\rbrace:=X_{36}^{1,1}$\\ \hline
\end{tabular} 
\label{A1-01algtmf191}
\end{table}

\begin{table}[H] 
\caption{$E_1$ page of the algebraic $tmf$ spectral sequence for $Ext_A^{s,t}(A_1\sma DA_1)$, where $\A=\A[01]$} 
\centering
\begin{tabular}{| c | c | c |} 
\hline
$s\backslash t-s$ & 70 & 71 \\ [0.5ex] 
\hline 
15&0& $\lbrace\bullet\rbrace=g^{-6}X_{39}^0$\\\hline
14&$\lbrace\bullet\bullet\bullet\bullet\rbrace=g^{-6}Y^0_{38}$ & $\lbrace\bullet\bullet\rbrace=g^{-6}X_{38}^0$\\\hline
13&$\bullet\bullet\bullet\bullet\bullet$ & $\lbrace\bullet\bullet\bullet\bullet\bullet\bullet\bullet\rbrace=g^{-6}X^0_{37}$\\ 
 &$\circ^1\circ^1\circ^1\circ^1\circ^1\circ^1$ &$\lbrace\circ^1\circ^1\circ^1\circ^1\circ^1\circ^1\circ^1\circ^1\rbrace=g^{-6}X^1_{37}$\\\hline
12&N/A&$\circ^1\circ^1\circ^1\circ^1\circ^1\circ^1$\\
 & &$\lbrace\odot\odot\rbrace=g^{-6}X^{1,1}_{36}$\\ \hline
\end{tabular} 
\label{A1-01algtmf71}
\end{table}
Tables~\ref{ADA1-01},~\ref{ADA1-01bo1},~\ref{ADA1-01bo2}, and~\ref{ADA1-01bo1bo1} and Tables~\ref{ADA1-10},~\ref{ADA1-10bo1},~\ref{ADA1-10bo2}, and~\ref{ADA1-10bo1bo1} make clear that
\begin{itemize}\item multiplication by $g^6$ is surjective onto $X_{39}^0\oplus X_{38}^0\oplus X_{37}^0\oplus X_{37}^1 \oplus X_{36}^{1,1}$, and
 \item elements in $g^{-6}(X_{39}^0\oplus X_{38}^0\oplus X_{37}^0\oplus X_{37}^1 \oplus X_{36}^{1,1})$ have nonzero images under multiplication by $v_2^{20}h_1$ if and only if multiplication by $g^6$ is nonzero.\end{itemize}
\begin{lem}\label{4.3}
 All elements of\[X_{39}^0\oplus X_{38}^0\oplus X_{37}^0\oplus X_{37}^1\oplus X_{36}^{1,1}\]are present in the Adams $E_2$ page, but are zero in the Adams $E_4$ page.
\end{lem}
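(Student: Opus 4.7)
The plan is to mirror exactly the argument of Lemma~\ref{4.2} (the analogous statement for $A_1[00]$ and $A_1[11]$), using the fact noted above that multiplication by $g^6$ is surjective from the relevant piece of the $71$-stem onto $X_{39}^0\oplus X_{38}^0\oplus X_{37}^0\oplus X_{37}^1 \oplus X_{36}^{1,1}$, together with Lemma~\ref{d3}, which gives the differential $d_3(v_2^{20}h_1)=g^6$ in the Adams spectral sequence for $\pi_*(A_1\sma DA_1)$. For any element $x$ in one of these subspaces, I would first choose $y$ with $g^6\cdot y=x$ (using Table~\ref{A1-01algtmf71}) and then apply the Leibniz rule
\[
d_3(v_2^{20}h_1\cdot y)=d_3(v_2^{20}h_1)\cdot y+v_2^{20}h_1\cdot d_3(y)=g^6\cdot y+v_2^{20}h_1\cdot d_3(y),
\]
so that showing $y$ is a $d_2$-and-$d_3$-cycle will produce the desired differential killing $x$.

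The argument that $y$ is indeed an Adams $E_3$-permanent cycle splits into the same three cases as in Lemma~\ref{4.2}. For $y\in g^{-6}(X_{39}^0\oplus X_{38}^0)$ the bidegrees (rows $s=14,15$ of Table~\ref{A1-01algtmf71}) already contain no algebraic $tmf$ filtration $>0$ and sit below the relevant part of the Adams spectral sequence for $tmf_*(A_1\sma DA_1)$ in such a way that $d_2$ and $d_3$ have nowhere to land, so $y$ is automatically a permanent cycle. For $y\in g^{-6}(X_{37}^1\oplus X_{36}^{1,1})$ any nontrivial $d_r(y)$ with $r\ge 2$ would need to lie in algebraic $tmf$ filtration strictly greater than that of $y$ (this is the standard consequence of the map of spectral sequences to $tmf_*(A_1\sma DA_1)$), but inspection of Table~\ref{A1-01algtmf71} shows no such elements exist in the bidegrees in question; thus $y$ is again a permanent cycle, and if it happens to already be a target of a differential in the algebraic $tmf$ spectral sequence then $x=0$ in $E_3$ of the Adams spectral sequence and we are done.

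The delicate case, as in Lemma~\ref{4.2}, is $y\in g^{-6}X_{37}^0$, where $y$ might carry a nontrivial Adams $d_2$. If $y$ is a permanent cycle the previous Leibniz argument applies. Otherwise $d_2(y)=y'$ lands in $g^{-6}Y_{38}^0$ (the only option by bidegree), and since multiplication by $g^6$ restricts to a bijection $g^{-6}Y_{38}^0\xrightarrow{\cong} Y_{38}^0$ (one of the two bulleted computer-checked facts just above the lemma), we get
\[
d_2(x)=d_2(g^6\cdot y)=g^6\cdot y'\neq 0,
\]
so $x$ is absent from $E_3$, hence from $E_4$. In every case $x$ is either zero or absent in the Adams $E_4$ page, as required.

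I expect the main obstacle to be purely bookkeeping: verifying that the two structural statements (surjectivity of $g^6$ and the ``$v_2^{20}h_1$ is nonzero iff $g^6$ is nonzero'' equivalence) genuinely hold in every bidegree listed for both $A_1[01]$ and $A_1[10]$. Those statements are the computational core, and they rely on Bruner's Ext-software output recorded in Tables~\ref{ADA1-01}--\ref{ADA1-01bo1bo1} and~\ref{ADA1-10}--\ref{ADA1-10bo1bo1}. Once these facts are in hand, the spectral sequence argument is formally identical to that of Lemma~\ref{4.2} and no new idea is needed; indeed, the very reason $A_1[01]$ and $A_1[10]$ can be handled simultaneously is that Tables~\ref{A1-01algtmf191} and~\ref{A1-01algtmf71} coincide in the two cases, so the case-by-case logic above produces one uniform proof.
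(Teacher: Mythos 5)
Your overall strategy is the same as the paper's: use the computer-checked surjectivity of multiplication by $g^6$ from stem $71$ onto $X_{39}^0\oplus X_{38}^0\oplus X_{37}^0\oplus X_{37}^1\oplus X_{36}^{1,1}$, show each preimage $y$ survives to the Adams $E_3$ page, and then kill $x=g^6\cdot y$ with the Leibniz rule applied to $d_3(v_2^{20}h_1)=g^6$. The handling of $g^{-6}(X_{39}^0\oplus X_{38}^0)$ and of $g^{-6}X_{36}^{1,1}$ (via the positive-algebraic-$tmf$-filtration constraint on $d_r(y)$) matches the paper.

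However, your ``delicate case'' for $y\in g^{-6}X_{37}^0$ contains a concrete error. Such a $y$ sits in Adams bidegree $(t-s,s)=(71,13)$, so a nontrivial $d_2(y)$ would land in bidegree $(70,15)$, not in $g^{-6}Y_{38}^0$, which sits at $(70,14)$; a class at filtration $14$ in stem $70$ is the target of a (nonexistent) Adams $d_1$ from $(71,13)$, not a $d_2$. Moreover the bijection $g^6\colon g^{-6}Y_{38}^0\to Y_{38}^0$ you invoke is not one of the two bulleted facts preceding the lemma (you are importing the $Y_{39}^0$ statement from the $\A[00]/\A[11]$ case, where the target of $d_2$ really is at filtration $15$ and is nonzero). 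For $\A[01]$ and $\A[10]$ this whole case is vacuous: Table~\ref{A1-01algtmf71} shows $Ext_A^{s,70+s}(\A\sma D\A)=0$ for $s\geq 15$, so every $y\in g^{-6}(X_{39}^0\oplus X_{38}^0\oplus X_{37}^0\oplus X_{37}^1)$ is automatically a permanent cycle --- this is exactly why the paper's proof here is simpler than that of Lemma~\ref{4.2} and needs no second branch. Since your first branch (``$y$ is a permanent cycle'') is the one that actually applies, your conclusion stands, but as written the alternative branch rests on a bidegree slip and an unjustified bijection, and you never verify which branch occurs.
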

\begin{proof}
Notice that for any $x =g^6 \cdot y \in X_{39}^0\oplus X_{38}^0\oplus X_{37}^0\oplus X_{37}^1 \oplus X_{36}^{1,1}$, both $x$ and $y$ is a nonzero permanent cycle of the algebraic $tmf$ spectral sequence. Indeed, the target of any differential supported by $y$, must have algebraic $tmf$ filtration greater than $y$ and from Table~\ref{A1-01algtmf71} it is clear no such elements are present in appropriate bidegrees. Hence $y$ is present in the Adams $E_2$ page. Same argument holds for $x$.

Any $y \in g^{-6}(X_{39}^0\oplus X_{38}^0\oplus X_{37}^0\oplus X_{37}^1)$ is a permanent cycle in Adams spectral sequence, as it is clear from Table~\ref{A1-01algtmf71} that $Ext^{s, 70+s}_{A}(\A \sma D\A) = 0$ for $s \geq 15$. If  $y \in g^{-6}X_{36}^{1,1}$, then $y$ has algebraic $tmf$ filtration greater than zero, therefore $d_r(y)$ must have algebraic $tmf$ filtration greater than zero. From Table~\ref{A1-01algtmf71}, we observe that $Ext^{s, 70+s}_{A}(\A \sma D\A)$ when $s \geq 14$, does not contain any element of algebraic $tmf$ filtration greater than zero. Therefore, any $y \in g^{-6}X_{36}^{1,1}$ is a permanent cycle as well. 

Since $d_2(g^6) =0$, for any $x =g^6 \cdot y \in X_{39}^0\oplus X_{38}^0\oplus X_{37}^0\oplus X_{37}^1\oplus X_{36}^{1,1}$
\[ d_2(x)  = d_2(g^6 \cdot y) = d_2(g^6) \cdot y + g^6 \cdot d_2(y) = 0. \]
Hence $x$ is present in $E_3$ page. 

 If $x = g^6 \cdot y$ is nonzero in $E_3$ page, Bruner's program shows that $v_2^{20}h_1 \cdot y$ is nonzero as well. Thus, using Leibniz rule
\[ d_3(v_2^{20}h_1 \cdot y) = d_3(v_2^{20}h_1) \cdot y + v_2^{20}h_1 \cdot d_3( y) = g^6 \cdot y = x.\]
Thus, $x$ is zero in $E_4$ page.
\end{proof}

\newpage
\appendix
\section{General remarks on the use of Bruner's program}\label{appendix}
\setcounter{table}{0}
\setcounter{figure}{0} 
Since many of our proofs relied on the output of Bruner's program, we append some facts about the program to justify our claims.

The program takes as input a graded module $M$ over $A$ (or $A(2)$) that is a finite dimensional $\Ft$-vector space and computes $Ext_A^{s,t}(M, \Ft)$ (or $Ext_{A(2)}^{s,t}(M, \Ft)$) for $t$ in a user-defined range, and $0\leq s\leq \texttt{MAXFILT}$, where one has $\texttt{MAXFILT}=40$ by default. The structure of $M$ as an $A$-module is encoded in a text file named \texttt{M}, placed in the directory \texttt{A/samples} in the way we will now describe.

The first line of the text file \texttt{M} consists of a positive integer $n$, the dimension of $M$ as an $\Ft$-vector space, whose basis elements we will call $g_0,\ldots ,g_{n-1}$. The second line consists of an ordered list of integers $d_0,\ldots,d_{n-1}$, which are the respective degrees of the $g_i$. Every subsequent line in the text file describes a nontrivial action of some $Sq^k$ on some generator $g_i$. For instance, if we have\[Sq^k(g_i)=g_{j1}+\cdots +g_{jl},\]we would encode this fact by writing the line\[\texttt{i k l j1 \ldots jl}\]followed by a new line. Every action not encoded by such a line is assumed to be trivial.
To ensure that such a text file in fact represents an honest $A$-module, we must run the \texttt{newconsistency} script, which will alert us if:
\begin{itemize}
\item the text file contains a line\[\texttt{i k l j1 \ldots jl}\]and it turns out that one of the $d_j$'s is not equal to $d_i+k$, or
\item the module taken as a whole fails to satisfy a particular Adem relation.
\end{itemize}

\begin{ex}\label{A1coh}
Consider the $A$-module given by Figure~\ref{A1}, where generators are depicted by dots and actions of $Sq^1$, $Sq^2$, and $Sq^4$ are depicted by black, blue and red lines respectively:
\begin{figure}[!h]
\centering
\begin{tikzpicture}[scale = .7]
\node (g0) at (2,0) [label=right:$g_0$] {$\bullet$};
\node (g1) at (2,1) [label=right:$g_1$] {$\bullet$};
\node (g2) at (4,2) [label=right:$g_2$] {$\bullet$};
\node (g3) at (4,3) [label=right:$g_3$] {$\bullet$};
\node (g7) at (4,6) [label=right:$g_7$] {$\bullet$};
\node (g6) at (4,5) [label=right:$g_6$] {$\bullet$};
\node (g5) at (2,4) [label=left:$g_5$] {$\bullet$};
\node (g4) at (2,3) [label=left:$g_4$] {$\bullet$};
\draw[-] (g0) -- (g1);
\draw[-] (g2) -- (g3);
\draw[-] (g4) -- (g5);
\draw[-] (g6) -- (g7);
\draw[blue, bend left] (g1) to (g4);
\draw[blue, bend right] (g3) to (g6);
\draw[blue, out= 20, in = -160] (g0) to (g2);
\draw[blue, out= 160, in = -20] (g2) to (g5);
\draw[blue, out= 20, in = -160] (g5) to (g7);
\draw[red] (g1) -- (0.8,1) -- (0.8,5) -- (2.9,5) (3.1,5) -- (g6);
\end{tikzpicture}
\caption{$H^*A_1[00]$ as an $A$-module}
\label{A1}
\end{figure}
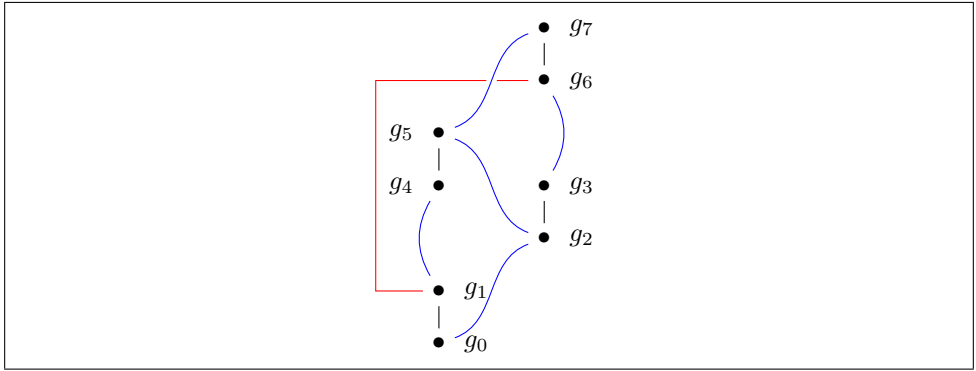
\end{ex}
Based on this picture, we get the text file in figure~\ref{A1-00-def}, which we call \texttt{A1-00\_def}:
\begin{figure}[h] 
\begin{small}
\begin{verbatim}
8

0 1 2 3 3 4 5 6

0 1 1 1
0 2 1 2
1 2 1 4
1 4 1 6
2 1 1 3
2 2 1 5
3 2 1 6
4 1 1 5
5 2 1 7
6 1 1 7
\end{verbatim}
\end{small}
\caption{The text file \texttt{A/samples/A1-00\_def}}
\label{A1-00-def}
\end{figure}
We go to the directory \texttt{A2} and run
\begin{center}
\[ \begin{array}{l}
\texttt{./newmodule A1-00 ../A/samples/A1-00\_def}\\
\texttt{cd A1-00.}
\end{array} \]
\end{center}
Now we are ready to compute. Running the script\begin{center}\texttt{./dims 0 250\&}\end{center}will compute $Ext_{A(2)}^{s,t}(A_1[00])$ for $0\leq s\leq\texttt{MAXFILT}=40$ and $0\leq t \leq 250$. The \texttt{\&} is not strictly necessary, but may be a good idea if running a computation expected to take a long time and if one would like to do other things in the meantime. Then, to see the Ext group, one runs
\begin{center}
\[\begin{array}{l}
\texttt{./report summary}\\\texttt{./vsumm A1-00 > A1-00.tex}\\\texttt{pdflatex A1-00.tex}
\end{array} \]
\end{center}
to produce a pdf document \texttt{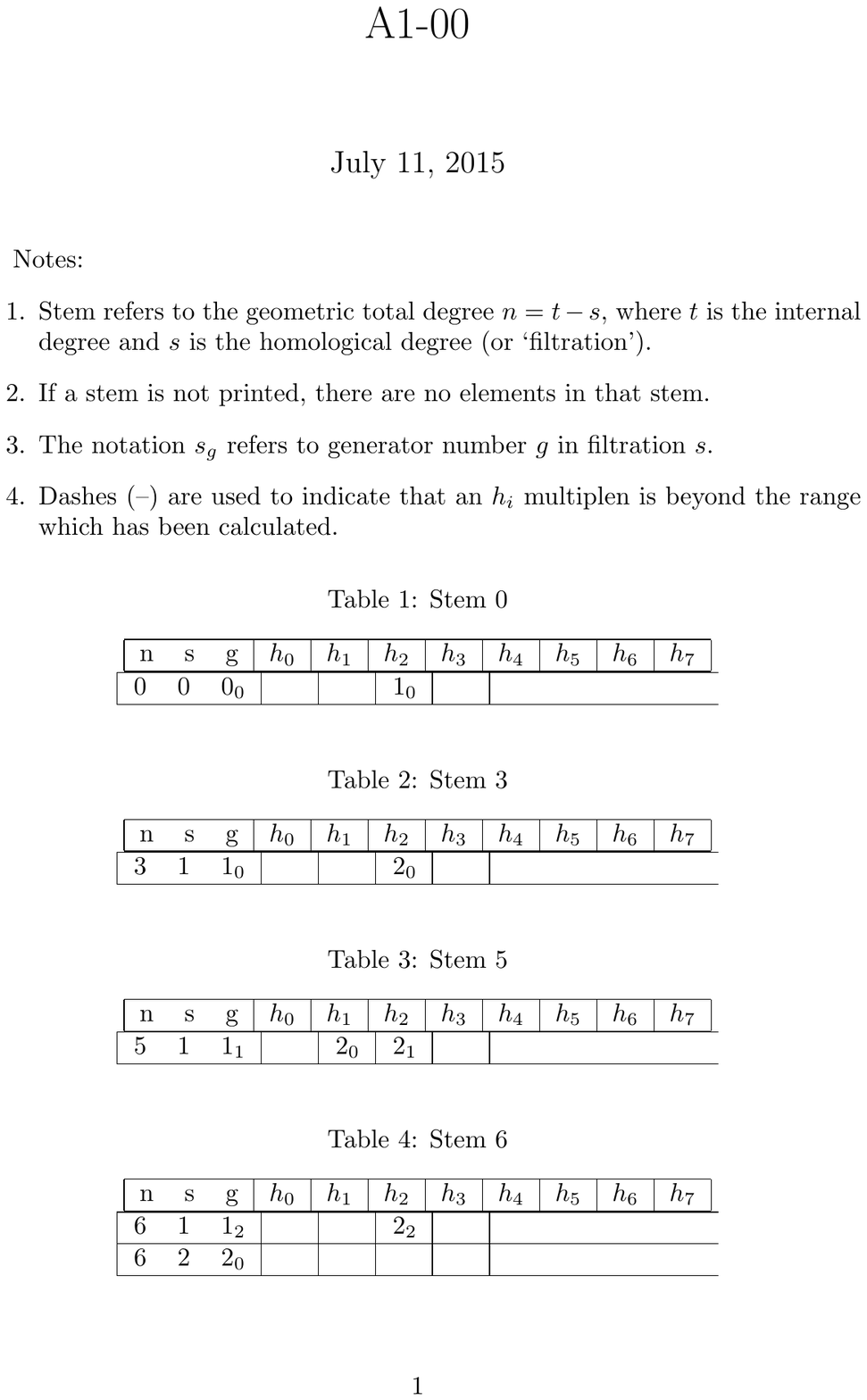} resembling Figure~\ref{A1-00-pdf}.
\begin{figure}
 \centering
 \includegraphics[page=1,width=\textwidth]{A1-00.pdf}
 \caption{First page of \texttt{A1-00.pdf}}
\label{A1-00-pdf}
\end{figure}
As the figure makes apparent, the generators of the Ext group (as an $\Ft$ vector space) are stored in the computer with names such as $s_g$, where $s$ is the Adams filtration of the generator, and $g$ is some way of ordering all generators of filtration $s$. It should be emphasized that $g$ is not the stem of the generator. In figure~\ref{A1-00-pdf} for instance, the generator $1_2$ is the second generator of filtration 1, but it is in stem 6. The figure also tells us the action of the Hopf elements $h_0$ through $h_3$, so that in our example, $h_2$ multiplied by the generator $1_2$ equals the generator $2_2$.

By running
\begin{center}
\texttt{./display 0 A1-00\_ }
\end{center}
to produce single-page pdf documents \texttt{A1-00\_1.pdf}, \texttt{A1-00\_2.pdf}..., it is also possible to see the Ext group in the visually more appealing form of a chart, as shown in figure~\ref{A1-00-1-pdf}. What is gained in esthetics is however lost in completeness, as these charts only display the action of $h_0$ (via a vertical solid line), $h_1$ (via a solid line of slope 1), and $h_2$ (via a dotted line of slope $\frac{1}{3}$).

\begin{figure}
 \centering
 \includegraphics[page=1,width=\textwidth]{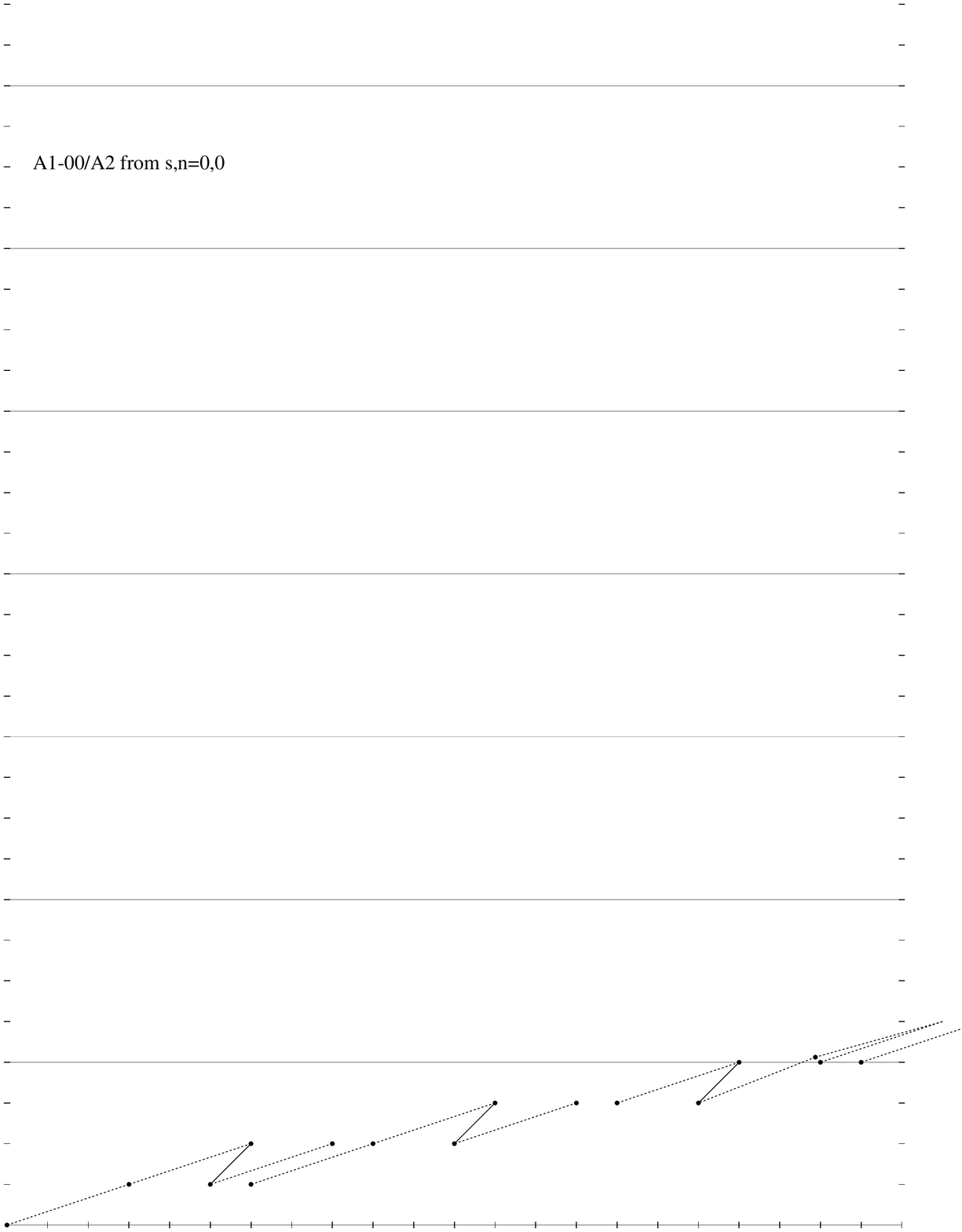}
 \caption{The file \texttt{A1-00\_1.pdf}}
\label{A1-00-1-pdf}
\end{figure}

The program is also capable of computing dual modules via the \texttt{dualizeDef} script, and tensor products via the \texttt{tensorDef} script. Both executables are conveniently located in the \texttt{A/samples} directory where we put our module definition text files. Thus, running
\begin{center}
\[ \begin{array}{l}
 \texttt{./dualizeDef A1-00\_def DA1-00\_def}\\
 \texttt{./tensorDef A1-00\_def DA1-00\_def ADA1-00\_def},
\end{array} \]
\end{center}
produces the text file \texttt{ADA1-00\_def}, with which we proceed in the same way as earlier with \texttt{A1-00\_def}.

While \texttt{ADA1-00.pdf} only shows the action of the Hopf elements $h_0$ through $h_3$, the scripts \texttt{cocycle} and \texttt{dolifts} enable the user to input a specific generator and find the action of much of $Ext_{A(2)}^{s,t}(S^0)$ on that specific generator. Let us do this with the generator $0_6\in Ext_{A(2)}^{0,0}(A_1[00]\sma DA_1[00])$ by going to directory \texttt{A2} and running
\begin{center}
\texttt{./cocycle ADA1-00 0 6}
\end{center}
which will create a subdirectory \texttt{A2/ADA1-00/0\_6}.
To find the action of all elements of $Ext_{A(2)}^{s,t}(S^0)$ with $0\leq s\leq 20$ on $0_6$, we go back to directory \texttt{A2/ADA1-00} and run
\begin{center}
\texttt{./dolifts 0 20 maps}
\end{center}
Now \texttt{ADA1-00/0\_6} will contain several text files, among them \texttt{brackets.sym} (which contains information about Massey products) and \texttt{Map.aug} (which contains information about the action of $Ext_{A(2)}^{s,t}(S^0)$ on $0_6$).

The generators of $Ext_{A(2)}^{s,t}(S^0)$ are stored in the computer in the format $s_g$. In figure~\ref{sgtable}, we include a list of important elements of $Ext_{A(2)}^{s,t}(S^0)$ and their $s_g$ representation.
\begin{figure}
\[\begin{array}{rcl}
g&=&4_8\in Ext_{A(2)}^{4,20+4}(S^0)\\
b_{3,0}^4&=&8_{19}\in Ext_{A(2)}^{8,48+8}(S^0)\\
e_0r&=&10_{18}\in Ext_{A(2)}^{10,47+10}(S^0)\\
b_{3,0}^8&=&16_{54}\in Ext_{A(2)}^{16,96+16}(S^0)\\
wgr&=&19_{56}\in Ext_{A(2)}^{19,95+19}(S^0)\\
v_2^{20}h_1&=&21_{85}\in Ext_{A(2)}^{21,121+21}(S^0)\\
g^6&=&24_{90}\in Ext_{A(2)}^{24,120+24}(S^0)\\
\end{array}\]
\caption{$s_g$ representation of important elements of $Ext_{A(2)}^{s,t}(S^0)$}\label{sgtable}
\end{figure}

We'd like to know what $s_g(0_6)\in Ext_{A(2)}(A_1[00]\sma DA_1[00])$ is in the notation of \texttt{ADA1-00.pdf}. Of course, $s_g(0_6)$ is in filtration $s$, so we only need to specify which of the generators in filtration $s$ make up $s_g(0_6)$. If, for instance, we have\[s_g(0_6)=s_{g1}+\ldots +s_{gn},\]then \texttt{ADA1-00/0\_6/Map.aug} will contain the lines
\begin{center}
\texttt{s g1 g}\\\texttt{s g2 g}\\\texttt{...}\\\texttt{s gn g}.
\end{center}

Now, in the Adams spectral sequence\[Ext_{A(2)}^{s,t}(S^0)\Rightarrow tmf_{t-s},\]we have\[d_2(b_{3,0}^4)=e_0r=10_{18}\in Ext_{A(2)}^{10,47+10}(S^0),d_3(b_{3,0}^8)=19_{56}\in Ext_{A(2)}^{19,95+19}(S^0).\]
It follows that if $10_{18}(0_6)=10_x\in Ext_{A(2)}^{8,8+47}(A_1\sma DA_1)$ and $19_{56}(0_6)=19_y\in Ext_{A(2)}^{19,19+95}(A_1\sma DA_1)$, then $b_{3,0}^4\in Ext_{A(2)}^{8,48+8}(A_1\sma DA_1)$ supports a $d_2$ differential, and $b_{3,0}^8\in Ext_{A(2)}^{16,96+16}(A_1\sma DA_1)$ supports a $d_3$ differential. By doing the above steps for all four versions of $A_1$, and checking the respective \texttt{Map.aug} files, each contain lines
\begin{center}
\texttt{10 x 18}\\\texttt{19 y 56},
\end{center}
justifying the claim in Lemma~\ref{not8or16}.

Using the tools we have so far described, it is easy to verify the claim from the proof of Lemma~\ref{d3overA2}, that for all four models of $\A$ we have
\begin{equation} \label{eqn:lem4.1}
gb_{3,0}^4\cdot10_3 = 22_7.
\end{equation}
It is similarly easy to verify that if $\A=\A[00]$ or $\A=\A[11]$, we have\[ge_0r \cdot 10_3=0,\]while if $\A=\A[01]$ or $\A=\A[10]$, we have\[ge_0r\cdot10_3=24_0=g^6.\]

Finally, in order to run the algebraic $tmf$ spectral sequence, we will also need do do computations involving the $bo$-Brown-Gitler spectra. We give the $A$-module definitions for the cohomologies of $bo_1$ and $bo_2$ here:

\begin{figure}[H] 
\begin{subfigure}{.45\textwidth}
\begin{small}
\begin{verbatim}
4

0 4 6 7

0 4 1 1
0 6 1 2
0 7 1 3
1 2 1 2
1 3 1 3
2 1 1 3
\end{verbatim}
\end{small}
\caption{The text file \texttt{bo1\_def}}
\label{bo1-def}
\end{subfigure}%
\end{figure}
\begin{figure}
\begin{subfigure}{.45\textwidth}
\begin{small}
\begin{verbatim}
11

0 4 6 7 8 10 11 12 13 14 15

0 4 1 1 
0 6 1 2 
0 7 1 3 
1 2 1 2 
1 3 1 3 
2 1 1 3 
2 4 1 5 
2 5 1 6 
3 4 1 6 
3 6 1 8 
4 2 1 5 
4 3 1 6 
4 4 1 7 
4 5 1 8 
4 6 1 9 
4 7 1 10 
5 1 1 6 
6 2 1 8 
7 1 1 8 
7 2 1 9 
7 3 1 10 
9 1 1 10 
\end{verbatim}
\end{small}
\caption{The text file \texttt{bo2\_def}}
\label{bo2-def}
\end{subfigure}%
\end{figure} \clearpage

\section{Tables from section~\ref{three}}\label{appendixB}

\subsection{The cases $\A=\A[00]$ or $\A=\A[11]$}
\hspace{1cm}
\begin{table}[H] 
\caption{$Ext_{A(2)}^{s,t}(\A\sma D\A)$}\label{D:ADA1-00}
\[\begin{array}{|c|c|c||c|c|c||c|c|c|}
\hline
t-s&s&s_g&t-s&s&s_g&t-s&s&s_g\\
\hline
119&25&25_{0}&120&25&25_{2}&121&25&25_{5}\\
&&&120&25&25_{1}&121&25&25_{4}\\
&&&&&&121&25&25_{3}\\\hline
119&24&&120&24&24_{25}&121&24&24_{28}\\
&&&120&24&24_{24}&121&24&24_{27}\\
&&&120&24&24_{23}&121&24&24_{26}\\
&&&120&24&24_{22}&&&\\
&&&120&24&24_{21}&&&\\\hline
119&23&&120&23&23_{42}&121&23&23_{47}\\
&&&120&23&23_{41}&121&23&23_{46}\\
&&&120&23&23_{40}&121&23&23_{45}\\
&&&120&23&23_{39}&121&23&23_{44}\\
&&&120&23&23_{38}&121&23&23_{43}\\\hline
119&22&&120&22&22_{64}&121&22&22_{68}\\
&&&120&22&22_{63}&121&22&22_{67}\\
&&&120&22&22_{62}&121&22&22_{66}\\
&&&&&&121&22&22_{65}\\\hline
119&21&&120&21&&121&21&21_{91}\\
&&&&&&121&21&21_{90}\\\hline
\end{array}\]
\[\begin{array}{|c|c|c||c|c|c|}
\hline
t-s&s&s_g&t-s&s&s_g\\
\hline
24&6&6_{1}&25&6&6_{4}\\
&&&25&6&6_{3}\\
&&&25&6&6_{2}\\\hline
24&5&5_{25}&25&5&5_{27}\\
24&5&5_{24}&25&5&5_{26}\\
24&5&5_{23}&&&\\
24&5&5_{22}&&&\\\hline
\end{array}\]
\[\begin{array}{|c|c|}
\hline
x&b_{3,0}^8x\\\hline
5_{27}&21_{91}\\
5_{26}&21_{90}\\\hline
\end{array}\]
\end{table}
\newpage
\begin{table}[H] 
\caption{$Ext_{A(2)}^{s-1,t-8}(\A\sma D\A\sma bo_1)$}\label{D:ADA1-00bo1}
\[
\begin{array}{|c|c|c||c|c|c||c|c|c|}
\hline
t-s&s&s_g&t-s&s&s_g&t-s&s&s_g\\
\hline
119&25&&120&25&&121&25&\\
\hline
119&24&&120&24&&121&24&\\
\hline
119&23&22_{5}&120&23&22_{11}&121&23&22_{19}\\
119&23&22_{4}&120&23&22_{10}&121&23&22_{18}\\
119&23&22_{3}&120&23&22_{9}&121&23&22_{17}\\
119&23&22_{2}&120&23&22_{8}&121&23&22_{16}\\
&&&120&23&22_{7}&121&23&22_{15}\\
&&&120&23&22_{6}&121&23&22_{14}\\
&&&&&&121&23&22_{13}\\
&&&&&&121&23&22_{12}\\\hline
119&22&21_{31}&120&22&&121&22&21_{33}\\
119&22&21_{30}&120&22&&121&22&21_{32}\\\hline
119&21&20_{51}&120&21&20_{57}&121&21&20_{62}\\
119&21&20_{50}&120&21&20_{56}&121&21&20_{61}\\
119&21&20_{49}&120&21&20_{55}&121&21&20_{60}\\
119&21&20_{48}&120&21&20_{54}&121&21&20_{59}\\
119&21&20_{47}&120&21&20_{53}&121&21&20_{58}\\
119&21&20_{46}&120&21&20_{52}&&&\\
119&21&20_{45}&&&&&&\\
119&21&20_{44}&&&&&&\\\hline
\end{array}\]
\[\begin{array}{|c|c|c||c|c|c|}
\hline
t-s&s&s_g&t-s&s&s_g\\
\hline
24&6&&25&6&\\\hline
24&5&&25&5&4_{0}\\\hline
\end{array}\]
\end{table}
\begin{table}[H] 
\caption{$Ext_{A(2)}^{s-1,t-16}(\A\sma D\A\sma bo_2)$}\label{D:ADA1-00bo2}
\[
\begin{array}{|c|c|c||c|c|c||c|c|c|}
\hline
t-s&s&s_g&t-s&s&s_g&t-s&s&s_g\\
\hline
119&25&&120&25&&121&25&\\
\hline
119&24&&120&24&&121&24&\\
\hline
119&23&&120&23&&121&23&\\
\hline
119&22&&120&22&&121&22&\\
\hline
119&21&&120&21&20_{0}&121&21&20_{2}\\
&&&&&&121&21&20_{1}\\\hline
\end{array}\]
\[\begin{array}{|c|c|c||c|c|c|}
\hline
t-s&s&s_g&t-s&s&s_g\\
\hline
24&6&&25&6&\\\hline
24&5&&25&5&\\\hline
\end{array}\]
\end{table}
\newpage
\begin{table}[H] 
\caption{$Ext_{A(2)}^{s-2,t-16}(\A\sma D\A\sma bo_1\sma bo_1)$}\label{D:ADA1-00bo1bo1}
\[\begin{array}{|c|c|c||c|c|c||c|c|c|}
\hline
t-s&s&s_g&t-s&s&s_g&t-s&s&s_g\\
\hline
119&25&&120&25&&121&25&\\
\hline
119&24&&120&24&&121&24&\\
\hline
119&23&&120&23&&121&23&\\
\hline
119&22&20_{1}&120&22&20_{5}&121&22&20_{11}\\
119&22&20_{0}&120&22&20_{4}&121&22&20_{10}\\
&&&120&22&20_{3}&121&22&20_{9}\\
&&&120&22&20_{2}&121&22&20_{8}\\
&&&&&&121&22&20_{7}\\
&&&&&&121&22&20_{6}\\\hline
119&21&19_{41}&120&21&19_{51}&121&21&19_{57}\\
119&21&19_{40}&120&21&19_{50}&121&21&19_{56}\\
119&21&19_{39}&120&21&19_{49}&121&21&19_{55}\\
119&21&19_{38}&120&21&19_{48}&121&21&19_{54}\\
119&21&19_{37}&120&21&19_{47}&121&21&19_{53}\\
119&21&19_{36}&120&21&19_{46}&121&21&19_{52}\\
119&21&19_{35}&120&21&19_{45}&&&\\
119&21&19_{34}&120&21&19_{44}&&&\\
119&21&19_{33}&120&21&19_{43}&&&\\
119&21&19_{32}&120&21&19_{42}&&&\\\hline
\end{array}\]
\[\begin{array}{|c|c|c||c|c|c|}
\hline
t-s&s&s_g&t-s&s&s_g\\
\hline
24&6&&25&6&\\\hline
24&5&&25&5&\\\hline
\end{array}\]
\end{table}
\clearpage

\subsection{The case $\A=\A[01]$}
\hspace{1cm}
\begin{table}[H] 
\caption{$Ext_{A(2)}^{s,t}(\A\sma D\A)$}\label{D:ADA1-01}
\[\begin{array}{|c|c|c||c|c|c||c|c|c|}
\hline
t-s&s&s_g&t-s&s&s_g&t-s&s&s_g\\
\hline
119&25&&120&25&&121&25&25_{0}\\\hline
119&24&24_{14}&120&24&24_{18}&121&24&24_{19}\\
119&24&24_{13}&120&24&24_{17}&&&\\
119&24&24_{12}&120&24&24_{16}&&&\\
119&24&24_{11}&120&24&24_{15}&&&\\
119&24&24_{10}&&&&&&\\\hline
119&23&23_{20}&120&23&23_{24}&121&23&23_{29}\\
&&&120&23&23_{23}&121&23&23_{28}\\
&&&120&23&23_{22}&121&23&23_{27}\\
&&&120&23&23_{21}&121&23&23_{26}\\
&&&&&&121&23&23_{25}\\\hline
119&22&22_{41}&120&22&&121&22&22_{41}\\
119&22&22_{40}&&&&&&\\\hline
119&21&21_{60}&120&21&21_{64}&121&21&21_{65}\\
119&21&21_{59}&120&21&21_{63}&&&\\
119&21&21_{58}&120&21&21_{62}&&&\\
119&21&21_{57}&120&21&21_{61}&&&\\
119&21&21_{56}&&&&&&\\
119&21&21_{55}&&&&&&\\
119&21&21_{54}&&&&&&\\\hline
\end{array}\]
\[\begin{array}{|c|c|c||c|c|c|}
\hline
t-s&s&s_g&t-s&s&s_g\\
\hline
24&6&&25&6&6_{1}\\\hline
24&5&5_{24}&25&5&5_{25}\\
24&5&5_{23}&&&\\
24&5&5_{22}&&&\\
24&5&5_{21}&&&\\\hline
\end{array}\]
\[\begin{array}{|c|c|}
\hline
x&b_{3,0}^8x\\\hline
5_{25}&21_{65}\\\hline
\end{array}\]
\end{table}
\newpage
\begin{table}[H] 
\caption{$Ext_{A(2)}^{s-1,t-8}(\A\sma D\A\sma bo_1)$}\label{D:ADA1-01bo1}
\[
\begin{array}{|c|c|c||c|c|c||c|c|c|}
\hline
t-s&s&s_g&t-s&s&s_g&t-s&s&s_g\\
\hline
119&25&&120&25&&121&25&\\
\hline
119&24&&120&24&&121&24&\\
\hline
119&23&22_{3}&120&23&22_{9}&121&23&22_{17}\\
119&23&22_{2}&120&23&22_{8}&121&23&22_{16}\\
119&23&22_{1}&120&23&22_{7}&121&23&22_{15}\\
119&23&22_{0}&120&23&22_{6}&121&23&22_{14}\\
&&&120&23&22_{5}&121&23&22_{13}\\
&&&120&23&22_{4}&121&23&22_{12}\\
&&&&&&121&23&22_{11}\\
&&&&&&121&23&22_{10}\\\hline
119&22&&120&22&&121&22&\\\hline
119&21&20_{45}&120&21&20_{51}&121&21&20_{56}\\
119&21&20_{44}&120&21&20_{50}&121&21&20_{55}\\
119&21&20_{43}&120&21&20_{49}&121&21&20_{54}\\
119&21&20_{42}&120&21&20_{48}&121&21&20_{53}\\
119&21&20_{41}&120&21&20_{47}&121&21&20_{52}\\
119&21&20_{40}&120&21&20_{46}&&&\\
119&21&20_{39}&&&&&&\\
119&21&20_{38}&&&&&&\\\hline
\end{array}\]
\[\begin{array}{|c|c|c||c|c|c|}
\hline
t-s&s&s_g&t-s&s&s_g\\
\hline
24&6&&25&6&\\\hline
24&5&&25&5&4_{0}\\\hline
\end{array}\]
\end{table}
\begin{table}[H] 
\caption{$Ext_{A(2)}^{s-1,t-16}(\A\sma D\A\sma bo_2)$}\label{D:ADA1-01bo2}
\[
\begin{array}{|c|c|c||c|c|c||c|c|c|}
\hline
t-s&s&s_g&t-s&s&s_g&t-s&s&s_g\\
\hline
119&25&&120&25&&121&25&\\
\hline
119&24&&120&24&&121&24&\\
\hline
119&23&&120&23&&121&23&\\
\hline
119&22&&120&22&&121&22&\\
\hline
119&21&&120&21&&121&21&\\\hline
\end{array}\]
\[\begin{array}{|c|c|c||c|c|c|}
\hline
t-s&s&s_g&t-s&s&s_g\\
\hline
24&6&&25&6&\\\hline
24&5&&25&5&\\\hline
\end{array}\]
\end{table}
\newpage
\begin{table}[H] 
\caption{$Ext_{A(2)}^{s-2,t-16}(\A\sma D\A\sma bo_1\sma bo_1)$}\label{D:ADA1-01bo1bo1}
\[\begin{array}{|c|c|c||c|c|c||c|c|c|}
\hline
t-s&s&s_g&t-s&s&s_g&t-s&s&s_g\\
\hline
119&25&&120&25&&121&25&\\
\hline
119&24&&120&24&&121&24&\\
\hline
119&23&&120F&23&&121&23&\\
\hline
119&22&&120&22&&121&22&20_{1}\\
&&&&&&121&22&20_{0}\\\hline
119&21&19_{29}&120&21&19_{37}&121&21&19_{39}\\
119&21&19_{28}&120&21&19_{36}&121&21&19_{38}\\
119&21&19_{27}&120&21&19_{35}&&&\\
119&21&19_{26}&120&21&19_{34}&&&\\
119&21&19_{25}&120&21&19_{33}&&&\\
119&21&19_{24}&120&21&19_{32}&&&\\
119&21&19_{23}&120&21&19_{31}&&&\\
119&21&19_{22}&120&21&19_{30}&&&\\
119&21&19_{21}&&&&&&\\
119&21&19_{20}&&&&&&\\\hline
\end{array}\]
\[\begin{array}{|c|c|c||c|c|c|}
\hline
t-s&s&s_g&t-s&s&s_g\\
\hline
24&6&&25&6&\\\hline
24&5&&25&5&\\\hline
\end{array}\]
\end{table}
\clearpage
\subsection{The case $\A=\A[10]$}
\hspace{1cm}
\begin{table}[H] 
\caption{$Ext_{A(2)}^{s,t}(\A\sma D\A)$}\label{D:ADA1-10}
\[\begin{array}{|c|c|c||c|c|c||c|c|c|}
\hline
t-s&s&s_g&t-s&s&s_g&t-s&s&s_g\\
\hline
119&25&&120&25&&121&25&25_{0}\\\hline
119&24&24_{14}&120&24&24_{18}&121&24&24_{19}\\
119&24&24_{13}&120&24&24_{17}&&&\\
119&24&24_{12}&120&24&24_{16}&&&\\
119&24&24_{11}&120&24&24_{15}&&&\\
119&24&24_{10}&&&&&&\\\hline
119&23&23_{20}&120&23&23_{24}&121&23&23_{29}\\
&&&120&23&23_{23}&121&23&23_{28}\\
&&&120&23&23_{22}&121&23&23_{27}\\
&&&120&23&23_{21}&121&23&23_{26}\\
&&&&&&121&23&23_{25}\\\hline
119&22&22_{40}&120&22&&121&22&22_{41}\\
119&22&22_{39}&&&&&&\\\hline
119&21&21_{61}&120&21&21_{65}&121&21&21_{66}\\
119&21&21_{60}&120&21&21_{64}&&&\\
119&21&21_{59}&120&21&21_{63}&&&\\
119&21&21_{58}&120&21&21_{62}&&&\\
119&21&21_{57}&&&&&&\\
119&21&21_{56}&&&&&&\\
119&21&21_{55}&&&&&&\\\hline
\end{array}\]
\[\begin{array}{|c|c|c||c|c|c|}
\hline
t-s&s&s_g&t-s&s&s_g\\
\hline
24&6&&25&6&6_{1}\\\hline
24&5&5_{25}&25&5&5_{26}\\
24&5&5_{24}&&&\\
24&5&5_{23}&&&\\
24&5&5_{22}&&&\\\hline
\end{array}\]
\[\begin{array}{|c|c|}
\hline
x&b_{3,0}^8x\\\hline
5_{26}&21_{66}\\\hline
\end{array}\]
\end{table}
\newpage
\begin{table}[H] 
\caption{$Ext_{A(2)}^{s-1,t-8}(\A\sma D\A\sma bo_1)$}\label{D:ADA1-10bo1}
\[
\begin{array}{|c|c|c||c|c|c||c|c|c|}
\hline
t-s&s&s_g&t-s&s&s_g&t-s&s&s_g\\
\hline
119&25&&120&25&&121&25&\\
\hline
119&24&&120&24&&121&24&\\
\hline
119&23&22_{3}&120&23&22_{9}&121&23&22_{17}\\
119&23&22_{2}&120&23&22_{8}&121&23&22_{16}\\
119&23&22_{1}&120&23&22_{7}&121&23&22_{15}\\
119&23&22_{0}&120&23&22_{6}&121&23&22_{14}\\
&&&120&23&22_{5}&121&23&22_{13}\\
&&&120&23&22_{4}&121&23&22_{12}\\
&&&&&&121&23&22_{11}\\
&&&&&&121&23&22_{10}\\\hline
119&22&&120&22&&121&22&\\\hline
119&21&20_{45}&120&21&20_{51}&121&21&20_{56}\\
119&21&20_{44}&120&21&20_{50}&121&21&20_{55}\\
119&21&20_{43}&120&21&20_{49}&121&21&20_{54}\\
119&21&20_{42}&120&21&20_{48}&121&21&20_{53}\\
119&21&20_{41}&120&21&20_{47}&121&21&20_{52}\\
119&21&20_{40}&120&21&20_{46}&&&\\
119&21&20_{39}&&&&&&\\
119&21&20_{38}&&&&&&\\\hline
\end{array}\]
\[\begin{array}{|c|c|c||c|c|c|}
\hline
t-s&s&s_g&t-s&s&s_g\\
\hline
24&6&&25&6&\\\hline
24&5&&25&5&4_{0}\\\hline
\end{array}\]
\end{table}

\begin{table}[H] 
\caption{$Ext_{A(2)}^{s-1,t-16}(\A\sma D\A\sma bo_2)$}\label{D:ADA1-10bo2}
\[
\begin{array}{|c|c|c||c|c|c||c|c|c|}
\hline
t-s&s&s_g&t-s&s&s_g&t-s&s&s_g\\
\hline
119&25&&120&25&&121&25&\\
\hline
119&24&&120&24&&121&24&\\
\hline
119&23&&120&23&&121&23&\\
\hline
119&22&&120&22&&121&22&\\
\hline
119&21&&120&21&&121&21&\\
\hline
\end{array}\]
\[\begin{array}{|c|c|c||c|c|c|}
\hline
t-s&s&s_g&t-s&s&s_g\\
\hline
24&6&&25&6&\\\hline
24&5&&25&5&\\\hline
\end{array}\]
\end{table}
\newpage
\begin{table}[H] 
\caption{$Ext_{A(2)}^{s-2,t-16}(\A\sma D\A\sma bo_1\sma bo_1)$}\label{D:ADA1-10bo1bo1}
\[\begin{array}{|c|c|c||c|c|c||c|c|c|}
\hline
t-s&s&s_g&t-s&s&s_g&t-s&s&s_g\\
\hline
119&25&&120&25&&121&25&\\
\hline
119&24&&120&24&&121&24&\\
\hline
119&23&&120&23&&121&23&\\
\hline
119&22&&120&22&&121&22&20_{1}\\
&&&&&&121&22&20_{0}\\\hline
119&21&19_{29}&120&21&19_{37}&121&21&19_{39}\\
119&21&19_{28}&120&21&19_{36}&121&21&19_{38}\\
119&21&19_{27}&120&21&19_{35}&&&\\
119&21&19_{26}&120&21&19_{34}&&&\\
119&21&19_{25}&120&21&19_{33}&&&\\
119&21&19_{24}&120&21&19_{32}&&&\\
119&21&19_{23}&120&21&19_{31}&&&\\
119&21&19_{22}&120&21&19_{30}&&&\\
119&21&19_{21}&&&&&&\\
119&21&19_{20}&&&&&&\\\hline
\end{array}\]
\[\begin{array}{|c|c|c||c|c|c|}
\hline
t-s&s&s_g&t-s&s&s_g\\
\hline
24&6&&25&6&\\\hline
24&5&&25&5&\\\hline
\end{array}\]
\end{table}
\clearpage

\section{Tables from section~\ref{four}}\label{appendixC}
\subsection{The case $\A=\A[00]$ or $\A=\A[11]$}
\hspace{1cm}
\begin{table}[H] 
\caption{$Ext_{A(2)}^{s,t}(\A\sma D\A)$}\label{ADA1-00}
\[
\begin{array}{|c|c|c||c|c|c||||c|c|c||c|c|c|}
\hline
t-s&s&s_g&t-s&s&s_g&t-s&s&s_g&t-s&s&s_g\\
\hline
\hline
70&15&15_{2}&71 & 15 & 15_{5}&190&39&39_{2}&191 & 39 & 39_{5}\\
70&15&15_{1} &71 & 15 & 15_{4}&190&39&39_{1}&191 & 39 & 39_{4}\\
  & & &71 & 15 & 15_{3} & & & &191 & 39 & 39_{3}\\
\hline
70 & 14 & 14_{25} &71 & 14 & 14_{29}&190& 38 & 38_{25}&191 & 38 & 38_{28}\\
70 & 14 & 14_{24} &71 & 14 & 14_{28}&190& 38 & 38_{24}&191 & 38 & 38_{27}\\
70 & 14 & 14_{23} &71 & 14 & 14_{27}&190& 38 & 38_{23}&191 & 38 & 38_{26}\\
70 & 14 & 14_{22} &71 & 14 & 14_{26}&190& 38 & 38_{22}&&&\\
70 & 14 & 14_{21} & & &&190& 38 & 38_{21}&&&\\
\hline
70 & 13 & 13_{46} &71 & 13 & 13_{53}&190& 37 & 37_{42}&191 & 37 & 37_{47}\\
70 & 13 & 13_{45} &71 & 13 & 13_{52}&190& 37 & 37_{41}&191 & 37 & 37_{46}\\
70 & 13 & 13_{44} &71 & 13 & 13_{51}&190& 37 & 37_{40}&191 & 37 & 37_{45}\\
70 & 13 & 13_{43} &71 & 13 & 13_{50}&190& 37 & 37_{39}&191 & 37 & 37_{44}\\
70 & 13 & 13_{42} &71 & 13 & 13_{49}&190& 37 & 37_{38}&191 & 37 & 37_{43}\\
&&&71 & 13 & 13_{48}&&&&&&\\
&&&71 & 13 & 13_{47}&&&&&&\\
\hline
 & & &  & & & 190& 36 & 36_{63}&191 & 36 & 36_{66}\\
 & & &  & & & 190& 36 & 36_{62}&191 & 36 & 36_{65}\\
 & & &  & & & 190& 36 & 36_{61}&191 & 36 & 36_{64}\\
\hline
\end{array}\]
\[\begin{array}{|c|c|c|c|c|}\hline
n&i_1,\ldots,i_n&x & g^6x & v_2^{20}h_1x\\\hline
0&0&15_5&39_5&36_{69}\\
0&0&15_4&39_4&36_{68}\\
0&0&15_3&39_3&36_{67}\\
0&0&15_2&39_2&36_{66}\\
0&0&15_1&39_1&36_{65}\\\hline
0&0&14_{29}&0&0\\
0&0&14_{28}&38_{28}&35_{92}\\
0&0&14_{27}&38_{27}&35_{91}\\
0&0&14_{26}&38_{26}&35_{90}\\
0&0&14_{25}&38_{25}&35_{89}\\
0&0&14_{24}&38_{24}&35_{88}\\
0&0&14_{23}&38_{23}&35_{87}\\
0&0&14_{22}&38_{22}&35_{86}\\
0&0&14_{21}&38_{21}&35_{85}\\\hline
0&0&13_{53}&0&0\\
0&0&13_{52}&0&0\\
0&0&13_{51}&37_{44}&34_{108}\\
0&0&13_{50}&37_{43}&34_{107}\\
0&0&13_{49}&37_{43}+37_{45}&34_{107}+34_{109}\\
0&0&13_{48}&37_{45}+37_{46}+37_{47}&34_{109}+34_{110}+34_{111}\\
0&0&13_{47}&37_{43}+37_{45}+37_{46}&34_{107}+34_{109}+34_{110}\\\hline
\end{array}\]
\end{table}

\begin{table}[H] 
\caption{$Ext_{A(2)}^{s-1,t-8}(\A\sma D\A\sma bo_1)$}\label{ADA1-00bo1}
\[
\begin{array}{|c|c|c||c|c|c||||c|c|c||c|c|c|}
\hline
t-s&s&s_g&t-s&s&s_g&t-s&s&s_g&t-s&s&s_g\\
\hline
\hline
70&15&&71&15&&190&39&&191&39&\\
\hline
70&14&&71&14&&190&38&&191&38&\\
\hline
70&13 & 12_{11} &71 & 13 & 12_{19}&190 & 37 & 36_{11} &191 & 37 & 36_{19}\\
70&13 & 12_{10} &71 & 13 & 12_{18}&190 & 37 & 36_{10} &191 & 37 & 36_{18}\\
70&13 & 12_{9} &71 & 13 & 12_{17}&190 & 37 & 36_{9} &191 & 37 & 36_{17}\\
70&13 & 12_{8} &71 & 13 & 12_{16}&190 & 37 & 36_{8} &191 & 37 & 36_{16}\\
70&13 & 12_{7} &71 & 13 & 12_{15}&190 & 37 & 36_{7} &191 & 37 & 36_{15}\\
70&13 & 12_{6} &71 & 13 & 12_{14}&190 & 37 & 36_{6} &191 & 37 & 36_{14}\\
&&&71 & 13 & 12_{13}&&&&191 & 37 & 36_{13}\\
&&&71 & 13 & 12_{12}&&&&191 & 37 & 36_{12}\\
\hline
70 & 12 & 11_{40} &71 & 12 & 11_{46}&190&36&&191 & 36 & 35_{33}\\
70 & 12 & 11_{39} &71 & 12 & 11_{45}&&&&191 & 36 & 35_{32}\\
70 & 12 & 11_{38} &71 & 12 & 11_{44}&&&&&&\\
70 & 12 & 11_{37} &71 & 12 & 11_{43}&&&&&&\\
70 & 12 & 11_{36} &71 & 12 & 11_{42}&&&&&&\\
70 & 12 & 11_{35} &71 & 12 & 11_{41}&&&&&&\\
70 & 12 & 11_{34} &&&&&&&&&\\
\hline
\end{array}\]
\[\begin{array}{|c|c|c|c|c|}\hline
n&i_1,\ldots,i_n&x&g^6x&v_2^{20}h_1x\\\hline
1&1&12_{19}&36_{19}&33_{83}\\
1&1&12_{18}&36_{18}&33_{82}\\
1&1&12_{17}&36_{17}&33_{79}+33_{83}\\
1&1&12_{16}&36_{16}&33_{79}+33_{81}\\
1&1&12_{15}&36_{15}&33_{80}\\
1&1&12_{14}&36_{14}&33_{78}+33_{79}+33_{81}+33_{83}\\
1&1&12_{13}&36_{13}&33_{77}\\
1&1&12_{12}&36_{12}&33_{76}+33_{83}\\\hline
1&1&11_{46}&0&0\\
1&1&11_{45}&0&0\\
1&1&11_{44}&0&0\\
1&1&11_{43}&35_{33}&32_{97}\\
1&1&11_{42}&35_{32}&32_{96}\\
1&1&11_{41}&0&0\\\hline
\end{array}\]
\end{table}

\begin{table}[H] 
\caption{$Ext_{A(2)}^{s-1,t-16}(\A\sma D\A\sma bo_2)$}\label{ADA1-00bo2}
\[
\begin{array}{|c|c|c||c|c|c||||c|c|c||c|c|c|}
\hline
t-s&s&s_g&t-s&s&s_g&t-s&s&s_g&t-s&s&s_g\\
\hline
\hline
70&15&&71&15&&190&39&&191&39&\\
\hline
70&14&&71&14&&190&38&&191&38&\\
\hline
70&13&&71&13&&190&37&&191&37&\\
\hline
70&12&&71&12&&190&36&&191&36 &\\
\hline
\end{array}\]
\end{table}

\begin{table}[H] 
\caption{$Ext_{A(2)}^{s-2,t-16}(\A\sma D\A\sma bo_1\sma bo_1)$}\label{ADA1-00bo1bo1}
\[
\begin{array}{|c|c|c||c|c|c||||c|c|c||c|c|c|}
\hline
t-s&s&s_g&t-s&s&s_g&t-s&s&s_g&t-s&s&s_g\\
\hline
\hline
70&15&&71&15&&190&39&&191&39&\\
\hline
70&14&&71&14&&190&38&&191&38&\\
\hline
70&13&&71&13&&190&37&&191&37&\\
\hline
70 & 12 & 10_{5} &71 & 12 & 10_{11} &190 & 36 & 34_{5}&191 & 36 & 34_{11}\\
70 & 12 & 10_{4} &71 & 12 & 10_{10} &190 & 36 & 34_{4}&191 & 36 & 34_{10}\\
70 & 12 & 10_{3} &71 & 12 & 10_{9} &190 & 36 & 34_{3}&191 & 36 & 34_{9}\\
70 & 12 & 10_{2} &71 & 12 & 10_{8} &190 & 36 & 34_{2}&191 & 36 & 34_{8}\\
&&&71 & 12 & 10_{7} &&&&191 & 36 & 34_{7}\\
&&&71 & 12 & 10_{6} &&&&191 & 36 & 34_{6}\\
\hline
\end{array}\]
\[\begin{array}{|c|c|c|c|c|}\hline
n&i_1,\ldots,i_n&x & g^6x & v_2^{20}h_1x\\\hline
2&1,1&10_{11}&34_{11}&31_{139}\\
2&1,1&10_{10}&34_{10}&31_{138}\\
2&1,1&10_{9}&34_9&31_{137}\\
2&1,1&10_{8}&34_8&31_{136}\\
2&1,1&10_{7}&34_7&31_{135}\\
2&1,1&10_{6}&34_6&31_{134}+31_{137}+31_{138}\\
\hline
\end{array}\]
\end{table}

\begin{table}[H] 
\caption{$Ext_{A(2)}^{s,t}(\A\sma D\A)$}\label{Lemma5.3:ADA1-00}
\[
\begin{array}{|c|c|c||c|c|c||c|c|c|}
\hline
t-s&s&s_g&t-s&s&s_g&t-s&s&s_g\\
\hline
\hline
95&20&20_2&143&28&28_{34}&191&36&36_{66}\\
95&20&20_1&143&28&28_{33}&191&36&36_{65}\\
&&&143&28&28_{32}&191&36&36_{64}\\
\hline
\end{array}\]
\[\begin{array}{|c|c|c|c|c|}\hline
x & b_{3,0}^4\cdot x & e_0r\cdot x&b_{3,0}^8\cdot x&wgr\cdot x\\\hline
28_{34}&36_{66}&0&N/A&N/A\\
28_{33}&36_{65}&0&N/A&N/A\\
28_{32}&36_{64}&38_{25}&N/A&N/A\\
20_2&28_{34}&N/A&36_{66}&39_2\\
20_1&28_{33}&N/A&36_{65}&39_1\\
\hline
\end{array}\]
\end{table}

\clearpage
\subsection{The case $\A=\A[01]$}
\hspace{1cm}
\begin{table}[H] 
\caption{$Ext_{A(2)}^{s,t}(\A\sma D\A)$}\label{ADA1-01}
\[
\begin{array}{|c|c|c||c|c|c||||c|c|c||c|c|c|}
\hline
t-s&s&s_g&t-s&s&s_g&t-s&s&s_g&t-s&s&s_g\\
\hline
\hline
70&15&&71 & 15 & 15_{0}&190&39&&191 & 39 & 39_{0}\\
\hline
70 & 14 & 14_{18}&71 & 14 & 14_{20}&190 & 38 & 38_{18}&191 & 38 & 38_{19}\\
70 & 14 & 14_{17}&71 & 14 & 14_{19}&190 & 38 & 38_{17}&&&\\
70 & 14 & 14_{16}&&&&190 & 38 & 38_{16}&&&\\
70 & 14 & 14_{15}&&&&190 & 38 & 38_{15}&&&\\
\hline
70 & 13 & 13_{33}&71 & 13 & 13_{40}&190 & 37 & 37_{24}&191 & 37 & 37_{29}\\
70 & 13 & 13_{32}&71 & 13 & 13_{39}&190 & 37 & 37_{23}&191 & 37 & 37_{28}\\
70 & 13 & 13_{31}&71 & 13 & 13_{38}&190 & 37 & 37_{22}&191 & 37 & 37_{27}\\
70 & 13 & 13_{30}&71 & 13 & 13_{37}&190 & 37 & 37_{21}&191 & 37 & 37_{26}\\
70 & 13 & 13_{29}&71 & 13 & 13_{36}&&&&191 & 37 & 37_{25}\\
&&&71 & 13 & 13_{35}&&&&&&\\
&&&71 & 13 & 13_{34}&&&&&&\\
\hline
70&12&&71&12&&190 & 36 & &191 & 36 & \\
\hline
\end{array}\]
\[\begin{array}{|c|c|c|}\hline
x&g^6x&v_2^{20}h_1x\\\hline
15_0&39_0&36_{40}\\\hline
14_{20}&0&0\\
14_{19}&38_{19}&35_{59}\\
14_{18}&38_{18}&35_{58}\\
14_{17}&38_{17}&35_{57}+35_{58}\\
14_{16}&38_{16}&35_{56}\\
14_{15}&38_{15}&35_{55}+35_{58}\\\hline
13_{40}&0&0\\
13_{39}&0&0\\
13_{38}&37_{29}&34_{69}\\
13_{37}&37_{28}&34_{68}\\
13_{36}&37_{27}&34_{67}+34_{68}\\
13_{35}&37_{26}&34_{66}+34_{67}+34_{69}\\
13_{34}&37_{25}&34_{65}+34_{66}\\
\hline
\end{array}\]
\end{table}
\newpage
\begin{table}[H] 
\caption{$Ext_{A(2)}^{s-1,t-8}(\A\sma D\A\sma bo_1)$}\label{ADA1-01bo1}
\[
\begin{array}{|c|c|c||c|c|c||||c|c|c||c|c|c|}
\hline
t-s&s&s_g&t-s&s&s_g&t-s&s&s_g&t-s&s&s_g\\
\hline
\hline
70&15&&71&15&&190&39&&191&39&\\
\hline
70&14&&71&14&&190&38&&191&38&\\
\hline
70&13 & 12_{9} &71 & 13 & 12_{17}&190 & 37 & 36_{9} &191 & 37 & 36_{17}\\
70&13 & 12_{8} &71 & 13 & 12_{16}&190 & 37 & 36_{8} &191 & 37 & 36_{16}\\
70&13 & 12_{7} &71 & 13 & 12_{15}&190 & 37 & 36_{7} &191 & 37 & 36_{15}\\
70&13 & 12_{6} &71 & 13 & 12_{14}&190 & 37 & 36_{6} &191 & 37 & 36_{14}\\
70&13 & 12_{5} &71 & 13 & 12_{13}&190 & 37 & 36_{5} &191 & 37 & 36_{13}\\
70&13 & 12_{4} &71 & 13 & 12_{12}&190 & 37 & 36_{4} &191 & 37 & 36_{12}\\
&&&71 & 13 & 12_{11}&&&&191 & 37 & 36_{11}\\
&&&71 & 13 & 12_{10}&&&&191 & 37 & 36_{10}\\
\hline
70 & 12 & 11_{36} &71 & 12 & 11_{42}&190&36&&191&36&\\
70 & 12 & 11_{35} &71 & 12 & 11_{41}&&&&&&\\
70 & 12 & 11_{34} &71 & 12 & 11_{40}&&&&&&\\
70 & 12 & 11_{33} &71 & 12 & 11_{39}&&&&&&\\
70 & 12 & 11_{32} &71 & 12 & 11_{38}&&&&&&\\
70 & 12 & 11_{31} &71 & 12 & 11_{37}&&&&&&\\
70 & 12 & 11_{30} &&&&&&&&&\\
\hline
\end{array}\]
\[\begin{array}{|c|c|c|}\hline
x&g^6x&v_2^{20}h_1x\\\hline
12_{17}&36_{17}&33_{73}\\
12_{16}&36_{16}&33_{72}+33_{73}\\
12_{15}&36_{15}&33_{71}\\
12_{14}&36_{14}&33_{70}+33_{71}\\
12_{13}&36_{13}&33_{69}+33_{71}+33_{72}+33_{73}\\
12_{12}&36_{12}&33_{68}+33_{69}+33_{71}+33_{72}+33_{73}\\
12_{11}&36_{11}&33_{67}+33_{68}+33_{69}+33_{72}\\
12_{10}&36_{10}&33_{66}+33_{72}\\\hline
\end{array}\]
\end{table}

\begin{table}[H] 
\caption{$Ext_{A(2)}^{s-1,t-16}(\A\sma D\A\sma bo_2)$}\label{ADA1-01bo2}
\[
\begin{array}{|c|c|c||c|c|c||||c|c|c||c|c|c|}
\hline
t-s&s&s_g&t-s&s&s_g&t-s&s&s_g&t-s&s&s_g\\
\hline
\hline
70&15&&71&15&&190&39&&191&39&\\
\hline
70&14&&71&14&&190&38&&191&38&\\
\hline
70&13&&71&13&&190&37&&191&37&\\
\hline
70&12&&71&12&&190&36&&191&36&\\\hline
\end{array}\]
\end{table}

\begin{table}[H] 
\caption{$Ext_{A(2)}^{s-2,t-16}(\A\sma D\A\sma bo_1\sma bo_1)$}\label{ADA1-01bo1bo1}
\[
\begin{array}{|c|c|c||c|c|c||||c|c|c||c|c|c|}
\hline
t-s&s&s_g&t-s&s&s_g&t-s&s&s_g&t-s&s&s_g\\
\hline
\hline
70&15&&71&15&&190&39&&191&39&\\
\hline
70&14&&71&14&&190&38&&191&38&\\
\hline
70&13&&71&13&&190&37&&191&37&\\
\hline
70&12&&71&12&10_1&190&36&&191&36&34_1\\
&&&71&12&10_0&&&&191&36&34_0\\\hline
\end{array}\]
\[\begin{array}{|c|c|c|}\hline
x&g^6x&v_2^{20}h_1x\\\hline
10_{1}&34_{1}&31_{81}\\
10_{0}&34_{0}&31_{80}\\\hline
\end{array}\]
\end{table}

%

\subsection{The case $\A=\A[10]$}
\hspace{1cm}
\begin{table}[H] 
\caption{$Ext_{A(2)}^{s,t}(\A\sma D\A)$}\label{ADA1-10}
\[
\begin{array}{|c|c|c||c|c|c||||c|c|c||c|c|c|}
\hline
t-s&s&s_g&t-s&s&s_g&t-s&s&s_g&t-s&s&s_g\\
\hline
\hline
&&&71 & 15 & 15_{0}&&&&191 & 39 & 39_{0}\\
\hline
70 & 14 & 14_{18}&71 & 14 & 14_{20}&190 & 38 & 38_{18}&191 & 38 & 38_{19}\\
70 & 14 & 14_{17}&71 & 14 & 14_{19}&190 & 38 & 38_{17}&&&\\
70 & 14 & 14_{16}&&&&190 & 38 & 38_{16}&&&\\
70 & 14 & 14_{15}&&&&190 & 38 & 38_{15}&&&\\
\hline
70 & 13 & 13_{34}&71 & 13 & 13_{41}&190 & 37 & 37_{24}&191 & 37 & 37_{29}\\
70 & 13 & 13_{33}&71 & 13 & 13_{40}&190 & 37 & 37_{23}&191 & 37 & 37_{28}\\
70 & 13 & 13_{32}&71 & 13 & 13_{39}&190 & 37 & 37_{22}&191 & 37 & 37_{27}\\
70 & 13 & 13_{31}&71 & 13 & 13_{38}&190 & 37 & 37_{21}&191 & 37 & 37_{26}\\
70 & 13 & 13_{30}&71 & 13 & 13_{37}&&&&191 & 37 & 37_{25}\\
&&&71 & 13 & 13_{36}&&&&&&\\
&&&71 & 13 & 13_{35}&&&&&&\\
\hline
\end{array}\]

\[\begin{array}{|c|c|c|}\hline
x & g^6x & v_2^{20}h_1x\\\hline
15_0&39_{0}&36_{40}\\\hline
14_{20}&0&0\\
14_{19}&38_{19}&35_{59}\\\hline
13_{41}&0&0\\
13_{40}&0&0\\
13_{39}&37_{29}&34_{69}\\
13_{38}&37_{28}&34_{68}\\
13_{37}&37_{27}&34_{67}\\
13_{36}&37_{26}&34_{66}\\
13_{35}&37_{25}&34_{65}\\
\hline
\end{array}\]
\end{table}

\begin{table}[H] 
\caption{$Ext_{A(2)}^{s-1,t-8}(\A\sma D\A\sma bo_1)$}\label{ADA1-10bo1}
\[
\begin{array}{|c|c|c||c|c|c||||c|c|c||c|c|c|}
\hline
t-s&s&s_g&t-s&s&s_g&t-s&s&s_g&t-s&s&s_g\\
\hline
\hline
70&15&&71&15&&190&39&&191&39&\\
\hline
70&14&&71&14&&190&38&&191&38&\\
\hline
70&13 & 12_{9} &71 & 13 & 12_{17}&190 & 37 & 36_{9} &191 & 37 & 36_{17}\\
70&13 & 12_{8} &71 & 13 & 12_{16}&190 & 37 & 36_{8} &191 & 37 & 36_{16}\\
70&13 & 12_{7} &71 & 13 & 12_{15}&190 & 37 & 36_{7} &191 & 37 & 36_{15}\\
70&13 & 12_{6} &71 & 13 & 12_{14}&190 & 37 & 36_{6} &191 & 37 & 36_{14}\\
70&13 & 12_{5} &71 & 13 & 12_{13}&190 & 37 & 36_{5} &191 & 37 & 36_{13}\\
70&13 & 12_{4} &71 & 13 & 12_{12}&190 & 37 & 36_{4} &191 & 37 & 36_{12}\\
&&&71 & 13 & 12_{11}&&&&191 & 37 & 36_{11}\\
&&&71 & 13 & 12_{10}&&&&191 & 37 & 36_{10}\\
\hline
70 & 12 & 11_{39} &71 & 12 & 11_{45}&190&36&&191&36&\\
70 & 12 & 11_{38} &71 & 12 & 11_{44}&&&&&&\\
70 & 12 & 11_{37} &71 & 12 & 11_{43}&&&&&&\\
70 & 12 & 11_{36} &71 & 12 & 11_{42}&&&&&&\\
70 & 12 & 11_{35} &71 & 12 & 11_{41}&&&&&&\\
70 & 12 & 11_{34} &71 & 12 & 11_{40}&&&&&&\\
70 & 12 & 11_{33} &&&&&&&&&\\
\hline
\end{array}\]
\[\begin{array}{|c|c|c|}\hline
x&g^6x&v_2^{20}h_1x\\\hline
12_{17}&36_{17}&33_{73}\\
12_{16}&36_{16}&33_{72}\\
12_{15}&36_{15}&33_{71}\\
12_{14}&36_{14}&33_{70}\\
12_{13}&36_{13}&33_{69}\\
12_{12}&36_{12}&33_{68}+33_{73}\\
12_{11}&36_{11}&33_{67}+33_{73}\\
12_{10}&36_{10}&33_{66}+33_{71}+33_{72}\\\hline
\end{array}\]
\end{table}
\newpage
\begin{table}[H] 
\caption{$Ext_{A(2)}^{s-1,t-16}(\A\sma D\A\sma bo_2)$}\label{ADA1-10bo2}
\[
\begin{array}{|c|c|c||c|c|c||||c|c|c||c|c|c|}
\hline
t-s&s&s_g&t-s&s&s_g&t-s&s&s_g&t-s&s&s_g\\
\hline
\hline
70&15&&71&15&&190&39&&191&39&\\
\hline
70&14&&71&14&&190&38&&191&38&\\
\hline
70&13&&71&13&&190&37&&191&37&\\
\hline
70&12&&71&12&&190&36&&191&36&\\\hline
\end{array}\]
\end{table}

\begin{table}[H] 
\caption{$Ext_{A(2)}^{s-2,t-16}(\A\sma D\A\sma bo_1\sma bo_1)$}\label{ADA1-10bo1bo1}
\[
\begin{array}{|c|c|c||c|c|c||||c|c|c||c|c|c|}
\hline
t-s&s&s_g&t-s&s&s_g&t-s&s&s_g&t-s&s&s_g\\
\hline
\hline
70&15&&71&15&&190&39&&191&39&\\
\hline
70&14&&71&14&&190&38&&191&38&\\
\hline
70&13&&71&13&&190&37&&191&37&\\
\hline
70&12&&71&12&10_1&190&36&&191&36&34_1\\
&&&71&12&10_0&&&&191&36&34_0\\\hline
\end{array}\]
\[\begin{array}{|c|c|c|}\hline
x&g^6x&v_2^{20}h_1x\\\hline
10_{1}&34_1&31_{81}\\
10_{0}&34_0&31_{80}\\\hline
\end{array}\]
\end{table}
\clearpage


\end{document}